\let\cal\mathscr
\newcommand \Om {\Omega}
\newcommand \om {\omega}
\newcommand \0 {\emptyset}
\renewcommand \leq {\leqslant}
\renewcommand \geq {\geqslant}
\DeclareMathOperator{\Vol}{Vol}
\DeclareMathOperator{\End}{End}
\DeclareMathOperator{\Tr}{Tr}
\DeclareMathOperator{\Ker}{Ker}
\DeclareMathOperator{\GL}{GL}
\DeclareMathOperator{\Ric}{Ric}
\DeclareMathOperator{\Aut}{Aut}
\DeclareMathOperator{\Herm}{Herm}
\DeclareMathOperator{\scal}{scal}
\DeclareMathOperator{\Prod}{Prod}
\DeclareMathOperator{\Met}{Met}
\DeclareMathOperator{\Hilb}{Hilb}
\DeclareMathOperator{\Kod}{Kod}
\DeclareMathOperator{\FS}{FS}
\DeclareMathOperator{\ev}{ev}
\newcommand \dbar {\overline{\partial}}
\newcommand \< {\mathcal{h}}
\renewcommand \> {\mathcal{i}}
\newcommand \cinf {\CC^\infty}
\newcommand \Id {{\rm Id}}
\renewcommand \epsilon {\varepsilon}
\newcommand \CC {{\cal C}}
\newcommand \BB {{\cal B}}
\newcommand \EE {{\cal E}}
\newcommand \HH {{\cal H}}
\newcommand \MM {{\cal M}}
\newcommand \TT {{\cal T}}
\newcommand \s {\textbf{s}}
\def\cL{\mathscr{L}}
\def\cL{\mathscr{L}}
\newcommand{\til}[1]{\widetilde{#1}}
\newcommand \dt {\frac{\partial}{\partial t}}
\newcommand \R {\mathbb R}
\newcommand \IP {\mathbb P}
\newcommand \C {\mathbb C}
\newcommand \N {\mathbb N}
\newcommand \n {\mathbb N}
\newcommand \fl {\rightarrow}
\newcommand \ignore[1] {}
\theoremstyle{plain}
\newtheorem{theorem}{Theorem}[section]
\newtheorem{lem}[theorem]{Lemma}
\newtheorem{cor}[theorem]{Corollary}
\newtheorem{prop}[theorem]{Proposition}
\theoremstyle{definition}
\newtheorem{defi}[theorem]{Definition}
\newtheorem{ex}[theorem]{Example}
\newtheorem{rmk}[theorem]{Remark}
\numberwithin{equation}{section}
\crefname{equation}{}{}
\crefname{lem}{Lemma}{Lemmas}
\crefname{theorem}{Theorem}{Theorems}
\crefname{cor}{Corollary}{Corollaries}
\crefname{ex}{Example}{Examples}
\crefname{defi}{Definition}{Definitions}
\crefname{prop}{Proposition}{Propositions}
\crefname{section}{Section}{Sections}
\crefname{subsection}{Section}{Sections}
\crefname{rmk}{Remark}{Remarks}
\crefname{nota}{Notation}{Notations}
\begin{document}

\title{\bf{Anticanonically balanced metrics on Fano manifolds}}
\author{Louis IOOS$^1$}
\date{}
\maketitle
\newcommand{\Addresses}{{
  \bigskip
  \footnotesize

  \textsc{Philipps-Universität Marburg, Hans-Meerwein-Strasse 6, 35043 Marburg,
Germany}\par\nopagebreak
  \textit{E-mail address}: \texttt{ioos@mathematik.uni-marburg.de}\par\nopagebreak
  \textit{Website}: \texttt{louisioos.github.io}
}}

\footnotetext[1]{Partially supported by the European Research Council Starting grant 757585}

\begin{abstract}
We show that if a Fano manifold has discrete automorphism
group and admits a polarized Kähler-Einstein metric,
then there exists
a sequence of anticanonically balanced metrics converging
smoothly to the Kähler-Einstein metric.
Our proof is based on a simplification of
Donaldson's proof of the analogous result for balanced metrics,
replacing a delicate geometric
argument by the use of Berezin-Toeplitz quantization.
We then apply this result to compute the asymptotics of
the optimal rate of convergence to the fixed point
of Donaldson's iterations in the anticanonical setting.
\end{abstract}

\section{Introduction}

A fundamental question in the study of a
compact complex manifold $X$ is the existence
of a \emph{canonical Riemannian metric},
which reflects its complex geometry in the best possible way.
When $X$ comes endowed with an ample holomorphic line
bundle $L$, one should look for such metrics
inside the set of \emph{polarized Kähler metrics}
induced by positive Hermitian metrics on $L$.
In case $X$ is a \emph{Fano manifold}, so that its
\emph{anticanonical line bundle} $K_X^*:=\det(T^{(1,0)}X)$ is
ample, the ideal candidate for such a canonical Riemannian metric
is a polarized \emph{Kähler-Einstein} metric.
By a result of Bando and Mabuchi in \cite{BM87},
if such
a Kähler-Einstein metric exists, then it is unique.
However, finding
Kähler-Einstein metrics on Fano manifolds
is an extremely difficult problem,
and existence is related to deep
properties of $X$ as a complex algebraic manifold
\cite{CDS15,Tia15}.

A fruitful approach in finding a Kähler-Einstein metric
on $X$, when it exists, is to approximate it by yet
another type of canonical
metrics, the so-called \emph{anticanonically balanced metrics},
which are associated with a natural sequence of
projective embeddings of $X$. To define them,
first recall that a holomorphic line bundle $L$
over a compact complex manifold $X$ is \emph{ample}
if it admits a \emph{positive Hermitian metric}
$h\in\Met^+(L)$, so that its \emph{Chern curvature}
$R_h\in\Om^2(X,\C)$ induces a \emph{Kähler form}
on $X$ via the formula
\begin{equation}\label{preq}
\om_h:=\frac{\sqrt{-1}}{2\pi}R_h\,.
\end{equation}
Writing $J\in\End(TX)$ for the complex structure of $X$,
this means that the following formula
defines a Riemannian metric on $X$, called a
\emph{polarized Kähler metric},
\begin{equation}\label{gTXintro}
g^{TX}_h:=\om_h(\cdot,J\cdot)\,.
\end{equation}
Assume now that $X$ is a Fano manifold,
so that $L:=K_X^*$ is ample, and fix $p\in\N$ big enough.
Consider
the \emph{Kodaira embedding}
of $X$ into the projective space
of hyperplanes in the space $H^0(X,L^p)$ of holomorphic
sections of the tensor power $L^p:=L^{\otimes p}$.
Via this embedding, $L^p$ is identified
with the restriction of the dual tautological
line bundle, and given a Hermitian product
$H\in\Prod(H^0(X,L^p))$ on $H^0(X,L^p)$,
one gets an induced positive Hermitian metric
$\FS(H)\in\Met^+(L^p)$ on $L^p$, called
\emph{Fubini-Study metric}.
Conversely, given a positive Hermitian metric
$h^p\in\Met^+(L^p)$,
one can consider the Hermitian
inner product $\Hilb_\nu(h^p)\in\Prod(H^0(X,L^p))$
defined on $s_1,\,s_2\in H^0(X,L^p)$ by
\begin{equation}\label{L2intro}
\<s_1,s_2\>_{\Hilb_\nu(h^p)}:=\frac{n_p}{\Vol(d\nu_{h})}\int_X\,
\<s_1(x),s_2(x)\>_{h^p}\,d\nu_{h}(x)\,,
\end{equation}
where $d\nu_h$ is the \emph{anticanonical volume form}
induced by $h\in\Met^+(L)$,
defined over any contractible open subset
$U\subset X$ by the formula
\begin{equation}\label{dnucandef}
d\nu_h:=\sqrt{-1}^{\,n^2}\frac{\theta\wedge\bar{\theta}}
{~~~|\theta|_{h^{-1}}^2}\,,
\end{equation}
for any non-vanishing $\theta\in\cinf(U,K_X)$, where
$h^{-1}$ denotes the Hermitian metric on $K_X$ induced by
$h\in\Met^+(K_X^*)$.
A Hermitian metric $h_p\in\Met^+(L^p)$ is called
\emph{anticanonically balanced} if it coincides
with the Fubini-Study metric induced by the
\emph{Hilbert product} \cref{L2intro}, i.e., if
\begin{equation}
h_p=\FS(\Hilb_\nu(h_p))\,.
\end{equation}
These metrics have been introduced by Donaldson in \cite{Don09}.
Note that the original concept of a
\emph{balanced metric}, introduced by Donaldson in \cite{Don01}
and which we describe in
\cref{Liouvilleex},
uses the \emph{Liouville volume form} $\om_h^n/n!$ in
the Hilbert product
\cref{L2intro} instead of the
anticanonical volume form \cref{dnucandef}.
By a result of Berman, Boucksom, Guedj and
Zeriahi in \cite[\S\,7]{BBGZ13},
if an anticanonically balanced metric $h_p\in\Met^+(L^p)$ exists,
then it is unique up to a multiplicative
constant in $\Met^+(L^p)$.
On the other hand, a polarized Kähler-Einstein metric
is characterized by the property that the associated
anticanonical
volume form \cref{dnucandef} coincides with the
associated
Liouville volume form up to a multiplicative
constant.

In \cref{anticansec}, we present a new proof of the following Theorem.
For any $m\in\N$, let $|\cdot|_{\CC^m}$ be a fixed
$\CC^m$-norm on $\Om^2(X,\R)$.

\begin{theorem}\label{mainth}
Let $X$ be a Fano manifold with discrete automorphism group
admitting a polarized Kähler-Einstein metric, and write
$L:=K_X^*$.
Then for any $m\in\N$, there exists $C_m>0$
and a sequence of positive Hermitian metrics
$\{h_p\in\Met^+(L^p)\}_{p\in\N}$,
which are anticanonically
balanced for all $p\in\N$ big enough and
such that
\begin{equation}\label{mainthfla}
\left|\,\frac{1}{p}\om_{h_p}-\om_\infty\,\right|_{\CC^m}
\leq \frac{C_m}{p}\,,
\end{equation}
where $\om_\infty\in\Om^2(X,\R)$
is the Kähler form associated with the polarized
Kähler-Einstein metric.
\end{theorem}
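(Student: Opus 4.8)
The plan is to follow Donaldson's dynamical strategy for balanced metrics, but to substitute the analytic heart of the argument—the expansion of the relevant operators as $p\to\infty$—with Berezin–Toeplitz quantization, which is exactly the simplification advertised in the abstract. Concretely, anticanonically balanced metrics are fixed points of the iteration $h^p\mapsto \FS(\Hilb_\nu(h^p))$ on $\Met^+(L^p)$; equivalently, after passing to Hermitian products via $H\mapsto\Hilb_\nu(\FS(H))$, they are fixed points of an endomorphism of the symmetric space $\Prod(H^0(X,L^p))$. Writing $T_p:=\Hilb_\nu\circ\FS$, one wants to show that near the Hermitian product $H_p^{KE}:=\Hilb_\nu((h^{KE})^p)$ determined by the Kähler–Einstein metric, the map $T_p$ is a contraction whose displacement $d(T_p(H_p^{KE}),H_p^{KE})$ is $O(p^{-1})$ (in a suitable operator norm, after rescaling), so that a quantitative inverse/implicit function theorem or a fixed-point iteration produces an honest balanced $H_p$ with $d(H_p,H_p^{KE})=O(p^{-1})$. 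Translating back through $\FS$ and using standard $\CC^m$-estimates on Fubini–Study potentials then yields \cref{mainthfla}.

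The key steps, in order, are the following. First, identify the linearization $dT_p$ at (or near) $H_p^{KE}$ with a Berezin–Toeplitz type operator: the composition $\Hilb_\nu\circ\FS$ differentiates to a Toeplitz operator whose symbol is the Bergman density function of the anticanonical $L^2$-product, and the off-diagonal of the Bergman kernel controls the error. Second, invoke the Berezin–Toeplitz asymptotic expansion for the anticanonical volume form $d\nu_h$ (rather than the Liouville form) to show that, at the Kähler–Einstein metric, this density is $1+O(p^{-1})$, so that $dT_p=\Id+O(p^{-1})$ up to a contraction in the orthogonal complement of the scalars. Here the Kähler–Einstein condition enters precisely as the statement that the anticanonical and Liouville volume forms are proportional, which makes the leading symbol constant. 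Third, establish the requisite uniform bounds: a lower bound on the spectral gap of $\Id-dT_p$ on the trace-free part (so the iteration contracts at a rate bounded away from $1$, uniformly in $p$), and a $C^0$-type a priori estimate keeping the iterates in a fixed-size ball around $H_p^{KE}$ so the linearized picture remains valid—this is where Donaldson's delicate geometric argument sat, and where Berezin–Toeplitz estimates replace it. Fourth, run the fixed-point iteration (or apply a quantitative implicit function theorem), obtaining $h_p$ anticanonically balanced for $p$ large with $d(H_p,H_p^{KE})=O(p^{-1})$. Fifth, convert this distance bound into the curvature estimate \cref{mainthfla}: apply $\FS$, use that $h_p$ is a bounded perturbation of $(h^{KE})^p$ in the Hermitian-product metric, and combine the partial $C^0$-expansion of Bergman kernels with elliptic bootstrapping to upgrade to $\CC^m$.

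The main obstacle I expect is the third step: obtaining the \emph{uniform-in-$p$} contraction rate together with the uniform a priori confinement of the iterates. The spectral gap amounts to showing that the second-variation operator of the relevant functional (a balancing energy / quantized Ding-type functional) is uniformly coercive on the trace-free tangent space, which requires both that $\Aut(X)$ is discrete (killing the kernel coming from holomorphic vector fields, exactly as in Donaldson's setting) and a Berezin–Toeplitz lower bound comparing the Hessian of the finite-dimensional functional with that of its $p\to\infty$ limit—the Mabuchi/Ding functional—whose coercivity near the Kähler–Einstein metric is the Bando–Mabuchi-type input. Making these two limits (large $p$ and small distance to $H_p^{KE}$) compatible, with all constants independent of $p$, is the technical crux; the anticanonical volume form is only smooth, not real-analytic in $h$, so one must check that the Berezin–Toeplitz machinery applies with the regularity at hand, but this is precisely what \cref{dnucandef} and the expansions cited from the literature are set up to provide.
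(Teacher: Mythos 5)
There is a genuine gap, and it sits exactly where you flag the ``technical crux'': your third step asks for a contraction rate of the Donaldson map $T_p=\Hilb_\nu\circ\FS$ that is \emph{bounded away from $1$ uniformly in $p$}, and this is false. The linearization of $T_p$ at a balanced product is $\left(1+\tfrac1p\right)\EE_{h^p}$ on trace-free endomorphisms, and the spectral gap of the quantum channel $\EE_{h^p}$ is only $\tfrac{\lambda_1}{4\pi p}+O(p^{-2})$ (\cref{Bpgap}); hence the contraction rate is $1-\tfrac{\lambda_1-4\pi}{4\pi p}+O(p^{-2})$, which tends to $1$ (this is precisely \cref{gapth}). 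With this degenerate gap your own arithmetic does not close: a displacement $d(T_p(H_p^{KE}),H_p^{KE})=O(p^{-1})$ combined with a contraction rate $1-c/p$ only locates the fixed point within $O(1)$ of $H_p^{KE}$, not $O(p^{-1})$. Worse, even an $O(p^{-1})$ bound on the symmetric-space distance would not yield \cref{mainthfla}: converting a trace-norm perturbation $B$ of the basis into a $\CC^m$-bound on the Kähler form costs a factor of order $p^{\,n+1+m/2}$ (this is the content of the Sobolev estimate \cref{Sobunif} and of \cref{approxomkplem}, which requires $\|B\|_{tr}\lesssim p^{-k_0}$ with $k_0>n+1+m/2$).

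The missing idea is therefore the inductive construction of \emph{approximately balanced metrics to arbitrary order}: one must first perturb the K\"ahler--Einstein metric by $\exp\big(\sum_{r<k}p^{-r}f_r\big)$, with the $f_r$ obtained by inverting $\tfrac{1}{4\pi}\Delta_{h_\infty}-1$ on the coefficients of the Bergman kernel expansion --- this is where discreteness of $\Aut(X)$ and the Lichnerowicz--Matsushima bound $\lambda_1>4\pi$ enter a first time (\cref{approxbal}) --- so that the moment map defect is $O(p^{n-k})$ with $k$ as large as needed relative to $n$ and $m$. Only then does the quantitative zero-finding step (the paper uses the moment-map flow of \cref{Donlem} rather than iterating $T_p$, with the lower bound $\Tr[A\,D\mu_\nu(A)]\gtrsim p^{-1}\tfrac{\Vol}{n_p}\|A\|_{tr}^2$ of \cref{dmu} supplied by the quantum-channel gap) produce a genuine zero within trace-norm distance $O(p^{n+1-k})$, which is small enough to preserve the $\CC^m$-estimate. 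Your identification of the linearization with a Berezin--Toeplitz quantum channel and your use of the spectral gap in place of Donaldson's geometric argument are exactly right in spirit; what is missing is the high-order approximate solution and the correct, $p$-dependent bookkeeping of the gap, the defect, and the norm conversion.
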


This result
has first been announced by Keller in \cite[Th.\,5]{Kel09}.
A proof of existence
and weak convergence
in the sense of currents has first been given by
Berman, Boucksom, Guedj and
Zeriahi in \cite[Th.\,7.1]{BBGZ13}, and a proof of smooth convergence
has then been given by Takahashi in \cite[Th. 1.3]{Tak21},
extending the original
proof of Donaldson in \cite{Don01} of the analogous result for the Liouville
volume form.

Our proof of \cref{mainth} also follows the basic strategy of Donaldson's
proof,
constructing approximately balanced metrics
using the asymptotic expansion of the \emph{Bergman kernel}
along the diagonal \cite{Cat99,Lu00,Tia90,Zel98}
and showing the convergence of the gradient flow of the norm squared
of the associated \emph{moment map} close to a zero.
However, the most technical part of Donaldson's proof,
which consists in estimating
the derivative of the moment map from below, has no straightforward analogue
in the anticanonical case. In fact, in the original case of Donaldson,
the derivative of the moment
map has a geometric interpretation, which has been clarified by
Phong and Sturm in \cite{PS04}, giving a natural lower bound.
By contrast, in the anticanonical
case of \cref{mainth}, there is no obvious geometric interpretations
for the derivative of the moment map,
and adapting \cite[Th.\,2]{PS04} is a serious
difficulty, which was only overcome recently by Takahashi in
\cite[Prop. 3.5]{Tak21}.
The main novelty of our method is to replace
this geometric
argument by the use of the asymptotics of the
\emph{spectral gap of the Berezin transform}
established in \cite[Th.\,3.1]{IKPS19}.
More precisely, we use the equivalent
asymptotics for the spectral gap of the
\emph{Berezin-Toeplitz quantum channel}, recalled in \cref{Bpgap},
which can be understood as 
the operation of dequantization followed by quantization
of a quantum observable, i.e.,
the \emph{Berezin-Toeplitz quantization of its Berezin symbol}.
This strategy was inspired by the work of Fine in \cite{Fin12},
who studied the derivative of the moment map in the original setting of
Donaldson, assuming the existence of a balanced metric.

In \cref{donsec}, we use \cref{mainth} together
with the techniques of \cite{IKPS19} and the energy functional
of \cite[\S\,7]{BBGZ13}
to establish the exponential convergence of \emph{Donaldson's
iterations} towards the anticanonically balanced metric
for each $p\in\N$ big enough,
and compute the asymptotics of the optimal rate of convergence
as $p\to+\infty$.
To explain this result, let us fix $p\in\N$ big enough,
and define the anticanonical \emph{Donaldson map}
on the space $\Prod(H^0(X,L^p))$ of Hermitian inner products on
$H^0(X,L^p)$ by
\begin{equation}
\TT_\nu:=\Hilb_\nu\circ\,\FS:\Prod(H^0(X,L^p))\longrightarrow
\Prod(H^0(X,L^p))\,.
\end{equation}
A fixed point $H\in\Prod(H^0(X,L^p))$
of this map is called an \emph{anticanonically balanced product}.
It has been introduced by Donaldson in \cite{Don05,Don09}
for various different
volume forms in the Hilbert product \cref{L2intro},
and has been used as a dynamical system approximating
the corresponding balanced metric, seen as the
Fubini-Study metric $\FS(H)\in\Met^+(L^p)$
associated with a fixed point.
Our main result in this context is the following,
where we use the natural distance on
$\Prod(H^0(X,L^p))$ as a symmetric space.

\begin{theorem}\label{gapth}
Let $X$ be a Fano manifold with discrete automorphism group
and admitting a polarized Kähler-Einstein metric.
Then for any 
$p\in\N$ big enough, there exists $\beta_p\in\,]0,1[$
such that for any $H_0\in\Prod(H^0(X,L^p))$,
there exists an anticanonically
balanced product $H\in\Prod(H^0(X,L^p))$
and a constant $C>0$ such that for all
$k\in\N$, we have
\begin{equation}\label{expcvest}
\textup{dist}\left(\TT_\nu^k(H_0)\,,\,H\right)\leq
C\beta^k_p\;.
\end{equation}
Furthermore, the constant $\beta_p\in\,]0,1[$
satisfies the following estimate as $p\fl+\infty$,
\begin{equation}\label{betap}
\beta_p=1-\frac{\lambda_1-4\pi}
{4\pi p}+ O(p^{-2})\,,
\end{equation}
where $\lambda_1>4\pi$ is the first positive eigenvalue of the
Riemannian Laplacian associated with the
polarized Kähler-Einstein metric acting on  $\cinf(X,\C)$,
and this estimate is sharp.
\end{theorem}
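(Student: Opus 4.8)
The plan is to linearize the anticanonical Donaldson map $\TT_\nu$ at an anticanonically balanced product, identify the resulting operator with the Berezin--Toeplitz quantum channel of \cref{Bpgap} up to controlled corrections, and read off \cref{betap} from the spectral gap asymptotics of that channel.

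\emph{Reduction to a linearized statement.} The hypotheses and \cref{mainth}, together with \cite[\S\,7]{BBGZ13}, furnish for all $p$ large an anticanonically balanced product $H_p\in\Prod(H^0(X,L^p))$, unique up to scaling, and a geodesically convex energy functional on the symmetric space $\Prod(H^0(X,L^p))$ which is proper modulo scaling and for which $\TT_\nu$ is a descent map; as in \cite{Don05,Don09} this already forces $\TT_\nu^k(H_0)$ to converge to some balanced product $H$ for every $H_0$, so it only remains to produce the exponential rate and to identify $\beta_p$. Because $\FS(\lambda H)=\lambda\,\FS(H)$ and $\Hilb_\nu(\mu h)=\mu\,\Hilb_\nu(h)$ for constants $\lambda,\mu>0$, the map $\TT_\nu$ is homogeneous of degree one; hence, by Euler's relation, the radial (scaling) direction is a $1$-eigenvector of the differential $d\TT_\nu|_H$, which is self-adjoint for the Hessian metric of the energy functional. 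The estimate \cref{expcvest} then reduces to showing that $d\TT_\nu|_H$ has operator norm $\beta_p<1$ on the orthogonal complement of the scaling direction, with $\beta_p$ equal to that norm; this gives exponential contraction on a neighborhood of $H$, and the global convergence above absorbs the finitely many initial iterates into the constant $C$.

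\emph{Computing the differential.} Writing $\TT_\nu=\Hilb_\nu\circ\FS$ and differentiating at $H$, identify $T_H\Prod(H^0(X,L^p))$ with the space of $H$-Hermitian endomorphisms of $H^0(X,L^p)$ and $T_{\FS(H)}\Met^+(L^p)$ with $\cinf(X,\R)$. Then $d\FS|_H$ sends a Hermitian endomorphism $A$ to (minus) its covariant Berezin symbol, i.e.\ to its dequantization, while $d\Hilb_\nu|_{\FS(H)}$ sends a function to the corresponding Berezin--Toeplitz operator, \emph{plus} two correction terms coming from the dependence on $h$ of the anticanonical volume form $d\nu_h$ and of the normalizing factor $\Vol(d\nu_h)$ through the induced metric on $K_X^*$. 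Consequently $d\TT_\nu|_H$ is the Berezin--Toeplitz quantum channel $E_p$ of \cref{Bpgap} up to these corrections; the off-diagonal Bergman kernel expansion of \cite{Cat99,Lu00,Tia90,Zel98,IKPS19} shows that the total discrepancy is $O(p^{-2})$ in operator norm, uniformly in a neighborhood of $H$, and that the leading, order $p^{-1}$ part of the correction is governed by the linearized Ricci operator and amounts, on the complement of the scaling direction, to adding $\tfrac1p$ to each eigenvalue --- equivalently, to replacing the spectral parameter $\lambda_1$ of $E_p$ by $\lambda_1-4\pi$.

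\emph{Spectral gap and sharpness.} By \cite[Th.\,3.1]{IKPS19}, recalled in \cref{Bpgap}, the restriction of $E_p$ to the orthogonal complement of the scaling direction has operator norm $1-\tfrac{\lambda_1}{4\pi p}+O(p^{-2})$, where $\lambda_1$ is the first positive eigenvalue of the Laplacian of the Kähler--Einstein metric on $\cinf(X,\C)$; combined with the order $p^{-1}$ correction above this gives $\beta_p=1-\tfrac{\lambda_1-4\pi}{4\pi p}+O(p^{-2})$. In the normalization of \cref{preq} the Kähler--Einstein condition reads $\Ric(\om_\infty)=2\pi\,\om_\infty$, so by the Lichnerowicz--Matsushima bound and discreteness of $\Aut(X)$ the inequality $\lambda_1>4\pi$ is strict and $\beta_p\in\,]0,1[$ for $p$ large. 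Sharpness of \cref{betap} follows by exhibiting a direction realizing the rate: the Berezin--Toeplitz quantization of a first eigenfunction of the Kähler--Einstein Laplacian is, up to $O(p^{-2})$, an eigenvector of $d\TT_\nu|_H$ with eigenvalue $1-\tfrac{\lambda_1-4\pi}{4\pi p}+O(p^{-2})$, so the leading coefficient cannot be improved.

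\emph{Main obstacle.} The delicate step is the second one: identifying $d\TT_\nu|_H$ with $E_p$ up to errors that are genuinely $O(p^{-2})$ in operator norm --- not merely pointwise in a symbol estimate --- uniformly over a neighborhood of the balanced product, which requires careful control of the derivative of the Bergman kernel asymptotics and of both correction terms attached to $d\nu_h$ and $\Vol(d\nu_h)$. A secondary difficulty is making rigorous the transition from the global convergence provided by the energy functional of \cite[\S\,7]{BBGZ13} to the linearized regime where the contraction estimate applies.
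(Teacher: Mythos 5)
Your overall route --- global convergence of the iterations from the convex energy functional of \cite[\S\,7]{BBGZ13}, linearization of $\TT_\nu$ at the balanced product, and identification of the linearization with the Berezin--Toeplitz quantum channel so that \cref{Bpgap} yields \cref{betap} --- is the same as the paper's. But two points in your execution are off. First, what you flag as the ``main obstacle'' is not one: the differential of $\TT_\nu$ at a fixed point is computed \emph{exactly}, with no Bergman kernel asymptotics and no operator-norm error to control. Differentiating \cref{L2intro} along $h^p_t=e^{tf}h^p$ and using $d\nu_{e^{tf/p}h}=e^{tf/p}\,d\nu_h$ gives
$D_{h^p}\Hilb_\nu(f)=\bigl(1+\tfrac1p\bigr)T_{h^p}(f)-\tfrac1p\bigl(\int_X f\,\tfrac{d\nu_h}{\Vol(d\nu_h)}\bigr)\Id$,
hence $D_H\TT_\nu=(1+\tfrac1p)\,\EE_{h^p_H}$ on traceless endomorphisms; there is no $O(p^{-2})$ discrepancy and no ``linearized Ricci operator''. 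The genuine analytic input at this stage lies elsewhere: \cref{Bpgap} controls the gap of $\EE_{h^p}$ with a constant that is uniform only over families of metrics bounded in $\CC^l$-norm, so to evaluate $\gamma_1$ \emph{at the balanced metric} you must know that the balanced metrics converge smoothly to the K\"ahler--Einstein metric --- i.e.\ you need the full strength of \cref{mainth}, not merely the existence of a balanced product.

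Second, the passage from the linear estimate to \cref{expcvest} is not closed. The fixed point is not hyperbolic: the scaling direction carries eigenvalue $1$, so Hartman--Grobman (or any contraction argument) only controls the determinant-normalized iterates $\TT_\nu^k(H_0)/\det(\TT_\nu^k(H_0))$. One must still show that $\det\TT_\nu^k(H_0)$ converges, and does so at the rate $\beta_p^k$; this does not follow from the linearization, and it is where the energy functional re-enters. Both terms of $\Psi$ in \cref{Psidef} decrease separately under $\TT_\nu$, and the convergence of the normalized iterates together with the differentiability of $\Psi$ gives $0\leq\Psi(\TT_\nu^k(H_0))-\Psi(H_p)\leq C\beta_p^k$, which pins down $\log\det\TT_\nu^k(H_0)=O(\beta_p^k)$. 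You acknowledge this as a ``secondary difficulty'' but leave it open; it is an essential step of the proof, not a formality.
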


This extends the results of
\cite[Th.\,4.4, Rmk.\,4.12]{IKPS19} to the anticanonical
setting.
As explained in \cref{gaprmk}, this confirms a prediction of
Donaldson in \cite{Don09} on the compared rates of convergence of the iterations
associated to various notions of balanced products.
Note that the \emph{smooth} convergence
of the Kähler forms in \cref{mainth} is necessary to compute the
rate of convergence \cref{betap}. On the other hand,
the proof of simple convergence in \cref{gapth} follows
from the work of Berman in \cite[Prop.\,2.9]{Ber13},
and is based on the
convexity of an appropriate
energy functional, which has been established in
\cite[Lemma\,7.2]{BBGZ13}
based on the results of Berndtsson in \cite{Ber09a,Ber09b}
on the positivity of direct images.
Note that the exponential convergence of the iterations
follows from the estimate \cref{betap} thanks to the
strict lower bound $\lambda_1>4\pi$ on the first
positive eigenvalue of the Kähler-Einstein Laplacian,
which holds under the necessary assumption
of discrete automorphism group as a consequence
of a classical result
of Lichnerowicz \cite{Lic59} and Matsushima \cite{Mat57}.
This lower bound plays a fundamental role in the proofs of both
\cref{mainth,gapth}, in particular in \cref{approxbal}
to construct approximately
balanced metrics and in \cref{dmu} via the asymptotics
of the spectral gap of the quantum channel.
\cref{gapth} also complements the work of Liu and Ma in \cite{LM07}, who
established the convergence of the refined approximations
of Donaldson in \cite[\S\,2.2.1]{Don09}.

The advantage of our proof of \cref{mainth} is
that it can be adapted in a systematic way to various
choices of a volume form in the Hilbert product
\cref{L2intro}, leading to the various notions
of balanced metrics.
In \cref{setting}, we give the general set-up for an
arbitrary \emph{volume map} \cref{nuhdef} on the space
$\Met^+(L)$ of positive
Hermitian metrics on an ample holomorphic line bundle $L$
over a compact complex manifold $X$.
This includes in particular
the \emph{$\nu$-balanced metrics} on
Calabi-Yau
manifolds and the
\emph{canonically balanced metrics} on manifolds
with ample canonical line bundle,
introduced by Donaldson in \cite{Don09}
and which we describe in \cref{cstantex,canex}.
The proof given in \cref{anticansec} can readily be
adapted to
these two cases, which do not need any assumption on the
automorphism group and are in fact easier.
We present the proof in the case of Fano manifolds only
because it is the most delicate one, as the Kähler-Einstein metric does not
exists a priori.
The smooth convergence of
$\nu$-balanced metrics to the polarized \emph{Yau metric}
associated to $d\nu$
has been outlined by Donaldson in
\cite[\S\,2.2]{Don09},
and then established by Keller in \cite[Th.\,4.2]{Kel09}
as a consequence of a result of Wang in \cite{Wan05}.
The differential of the associated moment map
at a $\nu$-balanced embedding has been studied by
Keller, Meyer and Seyyedali in \cite[\S\,6.2]{KMS16}.
On manifolds with ample canonical line bundle,
the uniform convergence of canonically balanced
metrics to the polarized Kähler-Einstein metric,
which always exists
in that case, follows from works of Tsuji \cite{Tsu10}
and Berndtsson.
Our method gives smooth convergence, and also establishes
the uniform convergence for anticanonically balanced metrics
on Fano manifolds. Finally, our method also applies to the
case of coupled K\"ahler-Einstein metrics considered by Takahashi
in \cite{Tak21}.

The adaptation
of our proof for the original notion
of balanced
metrics requires a refined
estimate on the spectral gap of the quantum channel,
which we establish in \cite[Th. 4.11]{IP21}.
Note that the use of the Kähler-Einstein Laplacian,
which is of order two, replaces in the anticanonical setting
the use of the \emph{Lichnerowicz
operator}, which is of order four,
in the original proof of Donaldson.
On the other hand, following the works
of Berman and Witt Nyström in \cite{BW14} and Takahashi in
\cite{Tak15}, we use in \cite{Ioo21} the method of the present
paper to handle the case of general
automorphism groups, replacing Kähler–Einstein metrics by Kähler–Ricci solitons. 
Finally, we also hope to apply our method to the case
of \emph{metaplectically balanced metrics}, giving
an approximation of the \emph{Cahen-Gutt moment map}
and involving a differential operator of order six,
following the program of Futaki and La Fuente-Gravy
outlined in \cite{FL19,FO20}.

The theory of Berezin-Toeplitz
quantization has first been developped by Bordemann,
Meinrenken and
Schlichenmaier in \cite{BMS94}, using the work of
Boutet de Monvel and Sjöstrand on the Szegö kernel in \citep{BdMS75} and the theory of
Toeplitz structures of Boutet de Monvel and Guillemin in
\cite{BdMG81}. This paper is based instead on the theory of
Ma and Marinescu in \cite{MM08b}, which uses the off-diagonal
asymptotic expansion of the Bergman kernel established
by Dai, Liu and Ma in \cite[Th.\,4.18']{DLM06} and which 
holds for an arbitrary volume form in the Hilbert product
\cref{L2intro}.
A comprehensive introduction of this theory can be found in
the book \cite{MM07}. The point of view of quantum measurement
theory on Berezin-Toeplitz quantization, which we adopt
in this paper, has been advocated by Polterovich
in \cite{Pol12,Pol14}.

\section{General Set-up}
\label{setting}


In this Section, we consider a compact complex manifold $X$
with $\dim_\C X=n$ endowed with an ample line bundle $L$,
together with a smooth map
\begin{equation}\label{nuhdef}
\begin{split}
\nu:\Met^+(L)&\longrightarrow\MM(X)\\
h&\longmapsto d\nu_h\,,
\end{split}
\end{equation}
from the space $\Met^+(L)$
of positive Hermitian metrics on $L$ to the space
$\MM(X)$ of smooth volume forms
over $X$. Such a map is called a \emph{volume map}.
For any $h\in\Met^+(L)$,
we write $\Vol(d\nu_h)>0$ for the volume of
$d\nu_h\in\MM(X)$.

For any $h\in\Met^+(L)$ and $p\in\N$, we write
$h^p\in\Met^+(L^p)$ for the induced positive Hermitian metric
on the $p$-th tensor power $L^p$. Conversely, any
$h^p\in\Met^+(L^p)$ uniquely determines
a positive Hermitian metric $h\in\Met^+(L)$.
We write $\cinf(X,L^p)$
for the space of smooth sections of $L^p$
and
\begin{equation}
H^0(X,L^p)\subset\cinf(X,L^p)
\end{equation}
for the subspace of
holomorphic sections of $L^p$ over $X$. We set
\begin{equation}\label{npdef}
n_p:=\dim H^0(X,L^p)\,.
\end{equation}

%
%

\subsection{Balanced metrics}
\label{balsec}

%


Recall from the classical \emph{Kodaira embedding theorem} that
a holomorphic line bundle $L$ is ample if and only if
for all $p\in\N$ big enough, the evaluation map
$\ev_x:H^0(X,L^p)\to L^p_x$
is surjective for all $x\in X$ and the
induced \emph{Kodaira map}
\begin{equation}\label{Kod}
\begin{split}
\text{Kod}_{p}:X&\longrightarrow\mathbb{P}(H^0(X,L^p)^*)\,,\\
x\,&\,\longmapsto \{\,s\in H^0(X,L^p)~|~s(x)=0\,\}
\end{split}
\end{equation}
is an embedding. In this section, we fix such a $p\in\N$.

We denote by $\Prod(H^0(X,L^p))$ the space of
Hermitian inner products on $H^0(X,L^p)$, and for any
$H\in\Prod(H^0(X,L^p))$, we denote by $\cL(H^0(X,L^p),H)$ the
space of endomorphisms on $H^0(X,L^p)$ which are
Hermitian with respect to $H$.
In the following definition, we introduce
the basic tools of this paper.
Their names will be justified in the next Section.

\begin{defi}\label{cohstateprojdef}
The \emph{coherent state
projector} associated to $H\in\Prod(H^0(X,L^p))$ is the map
\begin{equation}
\Pi_H:X\longrightarrow\cL(H^0(X,L^p),H)
\end{equation}
sending $x\in X$ to the orthogonal projector with
respect to $H$ satisfying
\begin{equation}\label{cohstateprojfla}
\Ker\Pi_H(x)=\{\,s\in H^0(X,L^p)~|~s(x)=0\,\}\,.
\end{equation}
The \emph{Berezin symbol} associated to
$H\in\Prod(H^0(X,L^p))$ is the map
\begin{equation}\label{BTdequantfla}
\begin{split}
\sigma_H:\cL(H^0(X,L^p),H)&\longrightarrow\cinf(X,\R)\\
A~&\longmapsto~\Tr[A\Pi_H]\,.
\end{split}
\end{equation}
\end{defi}

Note that the subspace \cref{cohstateprojfla} is the
hyperplane $\Kod_p(x)\subset H^0(X,L^p)$ given by the Kodaira
map \cref{Kod}, and the coherent state projector $\Pi_H(x)$
is thus a rank-$1$ projector, for all $x\in X$.

Recall that
$L^p$ is identified with the pullback of the dual
\emph{tautological
line bundle} over $\IP(H^0(X,L^p))$
via the Kodaira map \cref{Kod}.
Thus given $H\in\Prod(H^0(X,L^p))$, the
induced \emph{Fubini-Study metric} on the dual of the 
tautological line bundle pulls back to a positive Hermitian
metric on $L^p$. Using the coherent state projector
of \cref{cohstateprojdef}, this translates into the following
definition.

\begin{defi}\label{FSdef}
The \emph{Fubini-Study map} is the map
\begin{equation}\label{FSdeffla}
\FS:\Prod(H^0(X,L^p))\longrightarrow\Met^+(L^p)\,,
\end{equation}
sending $H\in\Prod(H^0(X,L^p))$ to the positive
Hermitian metric 
$\FS(H)\in\Met^+(L^p)$ on $L^p$
defined for any $s_1,\,s_2\in H^0(X,L^p)$ and $x\in X$ by
\begin{equation}\label{hFSdef}
\<s_1(x),s_2(x)\>_{\FS(H)}:=\<\Pi_H(x)\,s_1,\,s_2\>_H\,.
\end{equation}
%
\end{defi}

Recall on the other hand the definition \cref{L2intro}
of the \emph{Hilbert map}
\begin{equation}\label{Hilbdef}
\Hilb_\nu:\Met^+(L^p)\longrightarrow\Prod(H^0(X,L^p))\,,
\end{equation}
which holds for a general volume map \cref{nuhdef}.
We are now ready to introduce the main concept of this paper.

\begin{defi}\label{baldef}
A Hermitian metric $h^p\in\Met^+(L^p)$ is called
\emph{balanced} with respect to $\nu:\Met^+(L)\to\MM(X)$
if it satisfies
\begin{equation}
\FS\circ\Hilb_\nu(h^p)=h^p\,.
\end{equation}
A Hermitian product $H\in\Prod(H^0(X,L^p))$ is called
\emph{balanced} with respect to
$\nu:\Met^+(L)\to\MM(X)$ if it satisfies
\begin{equation}
\Hilb_\nu\circ\FS(H)=H\,.
\end{equation}
\end{defi}

Note that if $H\in\Prod(H^0(X,L^p))$ is a balanced product,
then $\FS(H)\in\Met^+(L^p))$ is a balanced metric,
and conversely, if $h^p\in\Met^+(L^p)$ is a balanced metric,
then $\Hilb_\nu(h^p)\in\Prod(H^0(X,L^p))$ is a balanced
product.
%

\begin{ex}\label{Liouvilleex}
The most fundamental example of a volume map is the
\emph{Liouville volume map}
\begin{equation}\label{Liouville}
\begin{split}
\nu:\Met^+(L)&\longrightarrow\MM(X)\\
h&\longmapsto d\nu_h:=\frac{\om_h^n}{n!}\,.
\end{split}
\end{equation}
Note that in that case, the volume $\Vol(X,L):=\Vol(d\nu_h)>0$
does not depend on $h\in\Met^+(L)$.
The analogue of \cref{mainth} in this context, where the limit
metric is a polarized
Kähler metric of \emph{constant scalar curvature},
has been established by
Donaldson in \cite{Don01}. The simple convergence of
the associated Donaldson
iterations as in \cref{donsec}
has been established by Donaldson in \cite{Don05}
and Sano in \cite[Th.\,1.2]{San06}.
\end{ex}

\begin{ex}\label{cstantex}
The simplest example of a volume map is the volume
map with a constant value $d\nu\in\MM(X)$ not depending on
$h\in\Met^+(L)$.
Balanced metrics in this context are called
\emph{$\nu$-balanced metrics}, and have first been studied
by Bourguignon, Li and Yau in \cite{BLY94}.
Donaldson apply them in \cite{Don09}
to study the polarized \emph{Yau metric} \cite{Yau78}
associated to $d\nu$,
which always exists and
is defined as the unique polarized Kähler metric
such that
\begin{equation}
\frac{\om_h^n}{n!}=c\,d\nu\,,
\end{equation}
for some multiplicative constant $c>0$.
This is of specific interest in case
$X$ is a \emph{Calabi-Yau manifold,} so that its canonical
line bundle $K_X$ is trivial and one can
take $d\nu:=\sqrt{-1}^{\,n^2}\,\theta\wedge\overline{\theta}$,
where $\theta\in H^0(X,K_X)$ is the unique nowhere vanishing
section of $K_X$ up to a multiplicative constant.
Then the polarized Yau metric coincides with the
polarized \emph{Ricci flat
metric}. The smooth convergence of the $\nu$-balanced metrics
towards the Yau metric as $p\to+\infty$
has been established by Donaldson in
\cite[\S\,2.2]{Don09}
and by Keller in \cite{Kel09}.
In that case, 
the assumption on the automorphism group is not needed.
The simple convergence of the associated Donaldson
iterations as in \cref{donsec} has been
established by Donaldson in \cite[Prop.\,4]{Don09}, and
exponential convergence as well as
the asymptotics of the optimal
rate of convergence have been worked out in
\cite[Th.\,3.1, Rmk.\,4.12]{IKPS19}.
\end{ex}

\begin{ex}\label{canex}
In case the \emph{canonical line bundle} $L:=K_X$ of $X$
is ample,
one can consider the \emph{canonical volume map}, sending
a positive Hermitian metric $h\in\Met(K_X)$ to the induced volume form defined analogously to \cref{dnucandef}
over any contractible
$U\subset X$ via a non-vanishing $\theta\in\cinf(U,K_X)$ by
\begin{equation}
d\nu_h:=\sqrt{-1}^{\,n^2}\,
\frac{\theta\wedge\overline{\theta}}{|\theta|_h^2}\,.
\end{equation}
In that case, the polarized
Kähler-Einstein metric always exists by a result of Aubin
\cite{Aub78} and Yau \cite{Yau78}.
The uniform convergence of balanced metrics
to the Kähler-Einstein metric as $p\to+\infty$
in this setting has been established by Tsuji \cite{Tsu10}
and Berndtsson (see also \cite[Th.\,7.1]{BBGZ13} for another
proof of the
convergence in the weak sense of currents). 
Once again, the assumption on the automorphism group is not
needed in that case.

The dual version, when $L:=K_X^*$ is ample, uses the
\emph{anticanonical volume map} \cref{dnucandef}.
\cref{mainth}
on the smooth convergence of the balanced metrics
to the polarized Kähler-Einstein metric
as $p\to+\infty$ in this setting is the main result
of this paper.
The exponential convergence of Donaldson's
iterations in this context is the result of \cref{gapth}.
Note that in this case,
and by contrast with the case $K_X$ ample described above,
even if we assume that the automorphism group is discrete,
Tian showed in \cite{Tia97}
that a Kähler-Einstein metric does not exist in
general.
%
\end{ex}

\subsection{Berezin-Toeplitz quantization}
\label{BTsec}

In this Section, we fix a positive Hermitian metric
$h\in\Met^+(L)$ and assume that $p\in\N$ is big enough
so that the Kodaira map \cref{Kod} is well defined and
an embedding. We consider the
Hermitian product $L^2(h^p)\in\Prod(H^0(X,L^p))$ defined for any
$s_1,\,s_2\in\cinf(X,L^p)$ by
\begin{equation}\label{L2}
\<s_1,s_2\>_{L^2(h^p)}:=
\int_X\,\<s_1(x),s_2(x)\>_{h^p}\,d\nu_h(x)\,.
\end{equation}
We write
\begin{equation}
\HH_p:=\left(H^0(X,L^p),\<\cdot,\cdot\>_{L^2(h^p)}\right)\,,
\end{equation}
for the associated Hilbert space of holomorphic sections.
We write $\cL(\HH_p)$ for the space of Hermitian endomorphisms
of $\HH_p$, and
\begin{equation}
\Pi_p:X\longrightarrow\cL(\HH_p)\,,
\end{equation}
for the associated coherent projector of \cref{cohstateprojdef}.
From the point of view of quantum mechanics, this coherent
state projector induces a \emph{quantization} of the
symplectic manifold $(X,\om_h)$, seen as a classical phase space.
A fundamental property in this respect is the following
result.

\begin{prop}\label{Rawnprop}
There exists a unique positive function
$\rho_{h^p}\in\cinf(X,\R)$, called the \emph{Rawnsley
(or density of states) function},
such that for any $s_1,\,s_2\in\HH_p$ and $x\in X$, we have
\begin{equation}\label{Rawndeffla}
\rho_{h^p}(x)\,\<\Pi_p(x)s_1,s_2\>_{L^2(h^p)}
=\<s_1(x),s_2(x)\>_{h^p}\,.
\end{equation}
In particular, we have
\begin{equation}\label{W(X)=1}
\int_X\,\Pi_p(x)\,\rho_{h^p}(x)\,d\nu_h(x)=\Id_{\HH_p}\,.
\end{equation}
\end{prop}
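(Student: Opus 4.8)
The plan is to exploit the fact that $\Pi_p(x)$ is a rank-$1$ orthogonal projector for each $x \in X$, as noted after Definition \ref{cohstateprojdef}. First I would fix $x \in X$ and observe that the evaluation map $\ev_x : \HH_p \to L^p_x$ is surjective with kernel the hyperplane $\Kod_p(x)$, so its adjoint identifies $L^p_x$ with the line $(\Ker \ev_x)^\perp = \im \Pi_p(x) \subset \HH_p$. Concretely, pick any unit vector $e_x \in \im\Pi_p(x)$ with respect to $\<\cdot,\cdot\>_{L^2(h^p)}$ (unique up to phase), so that $\Pi_p(x) s = \<s, e_x\>_{L^2(h^p)}\, e_x$ for all $s \in \HH_p$. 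Then $e_x(x) \in L^p_x$ is nonzero (since $e_x \notin \Ker\ev_x$), and I would simply define
\begin{equation}\label{rhodefproof}
\rho_{h^p}(x) := |e_x(x)|^2_{h^p}\,.
\end{equation}
This is manifestly positive and independent of the phase choice. Plugging $\Pi_p(x)s_1 = \<s_1,e_x\>_{L^2(h^p)}\,e_x$ into the left-hand side of \cref{Rawndeffla} gives $\rho_{h^p}(x)\,\<s_1,e_x\>_{L^2(h^p)}\overline{\<s_2,e_x\>_{L^2(h^p)}}$, while the right-hand side is $\<s_1(x),s_2(x)\>_{h^p}$; writing $s_i(x) = \<s_i,e_x\>_{L^2(h^p)}\,e_x(x)$ (valid because $s_i(x)$ and $e_x(x)$ are both in the line $L^p_x$, with the coefficient read off by applying $\ev_x$ to $s_i = \Pi_p(x)s_i + (\text{something in }\Ker\ev_x)$) shows the two sides agree, establishing \cref{Rawndeffla}. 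Uniqueness is immediate: \cref{Rawndeffla} with $s_1 = s_2 = e_x$ forces $\rho_{h^p}(x) = |e_x(x)|^2_{h^p}$, since $\<\Pi_p(x)e_x,e_x\>_{L^2(h^p)} = 1$.

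For smoothness of $\rho_{h^p}$, I would note that $x \mapsto \Pi_p(x)$ is a smooth map into $\cL(\HH_p)$ — this is standard for the Bergman/coherent-state projector and can be seen from the smooth dependence of $\ev_x$ on $x$ and the constant rank — and that locally one can choose $e_x$ smoothly (e.g.\ normalize $\Pi_p(x) s_0$ for a fixed section $s_0$ with $s_0(x_0) \neq 0$, on a neighborhood of $x_0$). Since $x \mapsto e_x(x)$ is then a smooth local section of $L^p$ and $h^p$ is smooth, \cref{rhodefproof} is smooth; the value $\rho_{h^p}(x)$ is globally well-defined independent of all choices, so smoothness is a local statement and we are done.

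Finally, for \cref{W(X)=1}: for any $s_1, s_2 \in \HH_p$, integrate \cref{Rawndeffla} against $d\nu_h$ to get
\begin{equation}
\Big\langle \Big(\int_X \Pi_p(x)\,\rho_{h^p}(x)\,d\nu_h(x)\Big) s_1,\, s_2\Big\rangle_{L^2(h^p)} = \int_X \<s_1(x),s_2(x)\>_{h^p}\,d\nu_h(x) = \<s_1,s_2\>_{L^2(h^p)}\,,
\end{equation}
using the definition \cref{L2} of the $L^2$ product on the right. Since this holds for all $s_1, s_2$, the bracketed operator equals $\Id_{\HH_p}$, which is \cref{W(X)=1}. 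I do not expect a serious obstacle here; the only point requiring a little care is justifying that $x \mapsto \Pi_p(x)$ is smooth (equivalently, that $\rho_{h^p}$ defined pointwise by \cref{rhodefproof} is smooth), which is where I would spend the most words, but this follows from the smoothness of the Bergman kernel on the diagonal — indeed $\rho_{h^p}$ is, up to the normalization in \cref{L2}, exactly the restriction to the diagonal of the Bergman kernel, whose smoothness and asymptotics are recalled in the references cited in the introduction.
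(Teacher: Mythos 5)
Your proof is correct and takes essentially the same route as the paper: where you pick a unit coherent state $e_x$ spanning $\im\Pi_p(x)$ and set $\rho_{h^p}(x):=|e_x(x)|^2_{h^p}$, the paper writes the same rank-one relation coordinate-free as $\ev_x^*\ev_x=\rho_{h^p}(x)\,\Pi_p(x)$ (both operators being Hermitian with kernel $\Kod_p(x)$), and both arguments conclude smoothness from smooth dependence on $x$ and obtain \cref{W(X)=1} by integrating \cref{Rawndeffla} against $d\nu_h$.
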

\begin{proof}
For any $x\in X$, consider the associated evaluation map
$\ev_x:\HH_p\fl L^p_x$,
and write $\ev_x^*:L^p_x\fl \HH_p$ for its dual with respect
to $h^p$ and $L^2(h^p)$.
Then for any $s_1,\,s_2\in\HH_p$,
we have by definition
\begin{equation}
\<s_1(x),s_2(x)\>_{h^p}=\<\ev_x^*\ev_x s_1,s_2\>_{L^2(h^p)}\,.
\end{equation}
By \cref{cohstateprojdef},
the endomorphisms $\ev_x^*\ev_x$ and $\Pi_p(x)$ have same kernel
in $\HH_p$, given by the hyperplane
$\Kod_p(x)\subset H^0(X,L^p)$ image of $x\in X$ by the Kodaira
map \cref{Kod}. As they are both
Hermitian, they also have same $1$-dimensional image in $\HH_p$,
so that there exists a unique
positive number
$\rho_{h^p}(x)>0$ such that
\begin{equation}
\rho_{h^p}(x)\,\Pi_p(x)=\ev_x^*\ev_x\,.
\end{equation}
As they both depend smoothly on $x\in X$,
this defines a unique smooth positive
function $\rho_{h^p}\in\cinf(X,\R)$ satisfying formula
\cref{Rawndeffla}. The identity \cref{W(X)=1} then follows
by integrating formula \cref{Rawndeffla} against
$d\nu_h$ via the definition \cref{L2} of $L^2(h^p)$.
\end{proof}
%

The fundamental role played by
the Rawnsley function in the study of
the balanced metrics of \cref{baldef} comes from the
following basic result.

\begin{prop}\label{balmet=balemb}
A positive Hermitian metric $h^p\in\Met^+(L^p)$ is balanced
with respect to $\nu:\Met^+(L)\to\MM(X)$ if and only if
for all $x\in X$,
the associated Rawnsley function $\rho_{h^p}\in\cinf(X,\R)$
satisfies
\begin{equation}\label{Rpcstant}
\rho_{h^p}(x)=\frac{n_p}{\Vol(d\nu_h)}\,.
\end{equation}
\end{prop}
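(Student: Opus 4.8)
The plan is to unwind the two definitions involved---the Fubini-Study metric and the Hilbert product---and show that the balanced equation $\FS\circ\Hilb_\nu(h^p)=h^p$ for a metric is literally equivalent to the pointwise constancy of the Rawnsley function at the stated value. First I would observe that, by \cref{Rawnprop}, for any $h^p\in\Met^+(L^p)$ and any $s_1,s_2\in\HH_p$ and $x\in X$ one has $\<s_1(x),s_2(x)\>_{h^p}=\rho_{h^p}(x)\,\<\Pi_p(x)s_1,s_2\>_{L^2(h^p)}$, where $\Pi_p$ is the coherent state projector for the $L^2$ product built from $h^p$ via \cref{L2}. Comparing this with the defining formula \cref{hFSdef} for $\FS(H)$, namely $\<s_1(x),s_2(x)\>_{\FS(H)}=\<\Pi_H(x)s_1,s_2\>_H$, we see that $h^p=\FS(H)$ for $H\in\Prod(H^0(X,L^p))$ precisely when the coherent projectors for $h^p$ (as an $L^2$ product) and for $H$ agree for every $x$, and $\rho_{h^p}\equiv 1$ with respect to $H$. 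More usefully, I would phrase it as: $\FS(\Hilb_\nu(h^p))=h^p$ iff the Rawnsley function of $h^p$ computed \emph{relative to the Hilbert product $\Hilb_\nu(h^p)$} is identically $1$.

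The key computation is then to relate this ``Rawnsley function relative to $\Hilb_\nu(h^p)$'' to the ordinary Rawnsley function $\rho_{h^p}$ of \cref{Rawnprop}, which is defined relative to $L^2(h^p)$ from \cref{L2}. Here the only difference between $L^2(h^p)$ in \cref{L2} and $\Hilb_\nu(h^p)$ in \cref{L2intro} is the overall scalar factor $n_p/\Vol(d\nu_h)$. Since rescaling the inner product on $\HH_p$ by a positive constant $c$ leaves the orthogonal projectors $\Pi_p(x)$ unchanged but, by the uniqueness in \cref{Rawnprop}, rescales the associated Rawnsley function by $1/c$, the Rawnsley function relative to $\Hilb_\nu(h^p)$ equals $\frac{\Vol(d\nu_h)}{n_p}\,\rho_{h^p}$. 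Therefore $\FS(\Hilb_\nu(h^p))=h^p$ holds iff $\frac{\Vol(d\nu_h)}{n_p}\,\rho_{h^p}(x)=1$ for all $x$, which is exactly \cref{Rpcstant}.

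For the converse direction inside this equivalence I would simply run the same identities backwards: if $\rho_{h^p}\equiv n_p/\Vol(d\nu_h)$, then the Rawnsley function relative to $\Hilb_\nu(h^p)$ is $\equiv 1$, so \cref{hFSdef} and \cref{Rawndeffla} give $\<s_1(x),s_2(x)\>_{\FS(\Hilb_\nu(h^p))}=\<\Pi_p(x)s_1,s_2\>_{\Hilb_\nu(h^p)}=\<s_1(x),s_2(x)\>_{h^p}$ for all $s_1,s_2,x$, whence $\FS(\Hilb_\nu(h^p))=h^p$. One small point worth spelling out is that the coherent projector appearing in \cref{hFSdef} for $H=\Hilb_\nu(h^p)$ is indeed the same operator-valued map $\Pi_p$ as in \cref{Rawnprop}: both are characterized by having kernel equal to the hyperplane $\Kod_p(x)$ and being self-adjoint---for $\Pi_p$ with respect to $L^2(h^p)$, hence also with respect to the rescaled product $\Hilb_\nu(h^p)$---so \cref{cohstateprojfla} pins them down to be equal.

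I do not expect a genuine obstacle here; the statement is essentially a bookkeeping exercise matching the normalization $n_p/\Vol(d\nu_h)$ in \cref{L2intro} against the behavior of the Rawnsley function under rescaling of the Hilbert product. The only place requiring a little care is making explicit that orthogonal projectors are insensitive to scaling the inner product while the Rawnsley function scales inversely, and that \cref{cohstateprojdef} does not depend on the chosen product beyond its induced notion of orthogonality on the relevant line and hyperplane; once that is noted, the equivalence in \cref{balmet=balemb} follows immediately from the definitions together with \cref{Rawnprop}.
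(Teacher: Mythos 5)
Your proposal is correct and follows essentially the same route as the paper: both arguments reduce to the observation that $\Hilb_\nu(h^p)=\frac{n_p}{\Vol(d\nu_h)}L^2(h^p)$, so that the coherent state projector is unchanged while the Rawnsley identity of \cref{Rawnprop} picks up the inverse scalar, yielding $\rho_{h^p}(x)\,\<s_1(x),s_2(x)\>_{\FS(\Hilb_\nu(h^p))}=\frac{n_p}{\Vol(d\nu_h)}\<s_1(x),s_2(x)\>_{h^p}$ and hence the stated equivalence. The only extra content in your write-up is the explicit remark that projectors are insensitive to rescaling the inner product, which the paper leaves implicit.
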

\begin{proof}
By definition, we have
\begin{equation}\label{Hilb=L2}
\Hilb_\nu(h^p)=\frac{n_p}{\Vol(d\nu_h)}\,L^2(h^p)\,,
\end{equation}
so that by \cref{FSdef} and 
\cref{Rawnprop}, for all
$s_1,\,s_2\in H^0(X,L^p)$ and $x\in X$ we have
\begin{equation}\label{Rdveq}
\rho_{h^p}(x)\,\<s_1(x),s_2(x)\>_{\FS(\Hilb_\nu(h^p))}=
\frac{n_p}{\Vol(d\nu_h)}\,\<s_1(x),s_2(x)\>_{h^p}\,.
\end{equation}
This gives the result by \cref{baldef} of a balanced metric.
\end{proof}

\cref{Rawnprop} describes fundamental
properties of a \emph{coherent state quantization},
given in our context by the following Definition.

\begin{defi}\label{BTquantdef}
The \emph{Berezin-Toeplitz quantization map}
is defined by
\begin{equation}\label{BTmapfla}
\begin{split}
T_{h^p}:\cinf(X,\R)&\longrightarrow\cL(\HH_p)\,.\\
f\,&\longmapsto\,\int_X f(x)\,\Pi_p(x)\,\rho_{h^p}(x)\,d\nu_h(x)
\end{split}
\end{equation}
\end{defi}

Using \cref{Rawnprop}, we have the following characterization
of the Berezin-Toeplitz quantization of $f\in\cinf(X,\R)$,
for all $s_1,\,s_2\in\HH_p$,
\begin{equation}\label{Tpf}
\begin{split}
\<T_{h^p}(f)s_1,s_2\>_{L^2(h^p)}&=\int_X\,f(x)\,\<\Pi_p(x)s_1,s_2\>_{L^2(h^p)}\,\rho_{h^p}(x)\,d\nu_h(x)\\
&=\int_X\,f(x)\,\<s_1(x),s_2(x)\>_{h^p}\,d\nu_h(x)\,.
\end{split}
\end{equation}
This shows that \cref{BTquantdef} coincides with the usual
definition of Berezin-Toeplitz quantization
associated with the volume form
$d\nu_h\in\MM(X)$, as described in \cite[Chap.\,7]{MM07}.
In the same way, one readily checks
that the Rawnsley function of \cref{Rawnprop}
coincides with the associated
\emph{Bergman kernel} along the diagonal,
as described in \cite[Chap.\,4]{MM07}. We will give
in \cref{sumprop}
its geometric description as a density of states.

In the context of quantization,
the Berezin symbol \cref{cohstateprojfla}
of a quantum observable $A\in\cL(\HH_p)$ is interpreted
as the classical observable given by
the expectation value of $A$
at coherent states.
This gives rise to the following concept, which will be the
main tool of this paper.

\begin{defi}\label{quantchandef}
The \emph{Berezin-Toeplitz quantum channel} is the
linear operator
\begin{equation}
\begin{split}
\EE_{h^p}:\cL(\HH_p)&\longrightarrow\cL(\HH_p)\,,\\
A~&\longmapsto~T_{h^p}\left(\sigma_{L^2(h^p)}
\left(A\right)\right)\,.
\end{split}
\end{equation}
\end{defi}

In the context of quantum measurement theory,
the quantum channel describes the effect of a measurement
on quantum observables.
The basic properties of the Berezin-Toeplitz quantum channel
have been studied in \cite{IKPS19}, based on \cite{BMS94}.
They are summarized in the following proposition.

\begin{prop}\label{quantchanprop}
The Berezin-Toeplitz quantum channel $\EE_{h^p}$ is a
positive self-adjoint
operator on the real Hilbert space $\cL(\HH_p)$
equipped with the trace norm,
and its eigenvalues $\{\gamma_k(h^p)\}_{k=1}^{n_p^2}$
counted with multiplicities satisfy
\begin{equation}\label{specE}
1=\gamma_0(h^p)>\gamma_1(h^p)\geq\gamma_2(h^p)
\geq\cdots\geq\gamma_{n_p^2}(h^p)>0\,,
\end{equation}
where $1=\gamma_0(h^p)$ is associated with
the eigenvector $\Id_{\HH_p}\in\cL(\HH_p)$.
\end{prop}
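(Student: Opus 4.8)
The plan is to establish the four assertions in turn: (i) $\EE_{h^p}$ is self-adjoint on $\cL(\HH_p)$ with the trace inner product; (ii) it is a positive operator; (iii) $\Id_{\HH_p}$ is an eigenvector with eigenvalue $1$; (iv) $1$ is a simple eigenvalue and all other eigenvalues lie strictly between $0$ and $1$.

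For (i), I would first record that both building blocks of $\EE_{h^p}=T_{h^p}\circ\sigma_{L^2(h^p)}$ are adjoint to one another with respect to the natural pairings. Concretely, equip $\cL(\HH_p)$ with the trace inner product $\la A,B\ra:=\Tr[AB]$ and $\cinf(X,\R)$ with the $L^2$ inner product against the measure $\rho_{h^p}\,d\nu_h$. Using \cref{Rawnprop} (specifically that $\Pi_p(x)$ is a rank-one orthogonal projector, so $\Tr[\Pi_p(x)A]=\sigma_{L^2(h^p)}(A)(x)$ and $\Tr[\Pi_p(x)\Pi_p(y)]\ge 0$), a short computation gives, for $A\in\cL(\HH_p)$ and $f\in\cinf(X,\R)$,
\begin{equation*}
\la T_{h^p}(f),A\ra=\int_X f(x)\,\Tr[\Pi_p(x)A]\,\rho_{h^p}(x)\,d\nu_h(x)=\int_X f(x)\,\sigma_{L^2(h^p)}(A)(x)\,\rho_{h^p}(x)\,d\nu_h(x),
\end{equation*}
so $T_{h^p}$ is the adjoint of $\sigma_{L^2(h^p)}$ for these two inner products. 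Hence $\EE_{h^p}=T_{h^p}\circ\sigma_{L^2(h^p)}$ is manifestly self-adjoint and positive semidefinite on $\cL(\HH_p)$, since $\la\EE_{h^p}(A),A\ra=\|\sigma_{L^2(h^p)}(A)\|^2_{L^2(\rho_{h^p}d\nu_h)}\ge 0$; this simultaneously proves (i) and the positivity half of (ii).

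For (iii), apply \cref{BTquantdef} and \cref{Rawnprop}: $\sigma_{L^2(h^p)}(\Id_{\HH_p})(x)=\Tr[\Pi_p(x)]=1$ for all $x$, since $\Pi_p(x)$ is a rank-one projector, and then $T_{h^p}(1)=\int_X\Pi_p(x)\,\rho_{h^p}(x)\,d\nu_h(x)=\Id_{\HH_p}$ by \cref{W(X)=1}. So $\EE_{h^p}(\Id_{\HH_p})=\Id_{\HH_p}$, i.e.\ $\gamma_0(h^p)=1$.

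The main obstacle is (iv): strict positivity of the smallest eigenvalue, simplicity of the top eigenvalue, and the strict gap $\gamma_1(h^p)<1$. For the spectral bound $\gamma_k(h^p)\le 1$ I would argue that $\sigma_{L^2(h^p)}$ is a contraction in the relevant norms — indeed $\sigma_{L^2(h^p)}$ maps Hermitian operators to real functions and, dually, $T_{h^p}$ is trace-nonincreasing in absolute value because it is an average of the positive operators $\Pi_p(x)$ — together with Jensen/Cauchy--Schwarz applied to the probability measure $\rho_{h^p}\,d\nu_h/\dim\HH_p$; this is exactly the quantum-channel contractivity established in \cite{IKPS19}. For strict positivity of the eigenvalues, if $\EE_{h^p}(A)=0$ then $\sigma_{L^2(h^p)}(A)\equiv 0$, i.e.\ $\Tr[\Pi_p(x)A]=0$ for all $x\in X$; since the rank-one projectors $\{\Pi_p(x)\}_{x\in X}$ span $\cL(\HH_p)$ (this is the injectivity of the Berezin symbol, a consequence of the Kodaira embedding being nondegenerate, hence coherent states not all lying in a proper subspace), it follows that $A=0$, so $\Ker\EE_{h^p}=0$ and every $\gamma_k(h^p)>0$. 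Finally, for $\gamma_1(h^p)<1$: if $\EE_{h^p}(A)=A$ then by the equality case in the contraction estimate — Cauchy--Schwarz is an equality only when $\sigma_{L^2(h^p)}(A)$ is constant as a function on $X$, equivalently $\Tr[\Pi_p(x)A]$ is independent of $x$ — one deduces $A$ is a multiple of $\Id_{\HH_p}$, again using that the $\Pi_p(x)$ span. This pins the $1$-eigenspace to $\R\cdot\Id_{\HH_p}$ and forces the strict inequalities $1=\gamma_0(h^p)>\gamma_1(h^p)\ge\cdots\ge\gamma_{n_p^2}(h^p)>0$ of \cref{specE}. Throughout, the nontrivial inputs are the span property of the coherent state projectors and the equality analysis in the contraction bound; both are available from \cite{IKPS19,BMS94}, and I would cite \cref{quantchanprop}'s provenance there rather than reprove the sharp spectral-gap asymptotics, which are not needed here.
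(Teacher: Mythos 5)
Your proposal is correct and follows essentially the same route as the paper: both derive the bilinear formula $\Tr[A\,\EE_{h^p}(B)]=\int_X\sigma(A)\,\sigma(B)\,\rho_{h^p}\,d\nu_h$ (you via the adjointness of $T_{h^p}$ and $\sigma_{L^2(h^p)}$, the paper by direct computation) to get self-adjointness and positivity, both obtain $\EE_{h^p}(\Id_{\HH_p})=\Id_{\HH_p}$ from $\Tr[\Pi_p(x)]=1$, and both ultimately defer injectivity and $\gamma_1(h^p)<1$ to \cite{IKPS19}. One small caveat in your sketch of the deferred part: equality in the pointwise Cauchy--Schwarz bound $\sigma(A)^2\leq\sigma(A^2)$ forces each coherent state to be an eigenvector of $A$ (from which constancy of $\sigma(A)$ and $A\propto\Id$ follow via connectedness and the spanning property), rather than directly forcing $\sigma(A)$ to be constant--but since you cite \cite{IKPS19} for this, it is not a gap.
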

\begin{proof}
By \cref{cohstateprojdef,BTquantdef},
for any $A,\,B\in\cL(\HH_p)$,
we have
\begin{equation}\label{quatnchantr}
\Tr\left[A\,\EE_{h^p}\left(B\right)\right]=\int_X\,\Tr[A\Pi_p(x)]
\,\Tr[B\Pi_p(x)]\,\rho_{h^p}(x)\,d\nu_h(x)\,,
\end{equation}
so that as $\rho_{h^p}>0$ by definition,
the quantum channel $\EE_{h^p}$ is positive
and self-adjoint
for the trace norm on $\cL(\HH_p)$.
Furthermore, as $\Tr[\Pi_p(x)]=1$ for all $x\in X$, we see
from \cref{Rawnprop}
that $\EE_{h^p}(\Id_{\HH_p})=\Id_{\HH_p}$. 
The injectivity of $\EE_{h^p}$ and the fact that
$\gamma_1(h^p)<1$ follow from the results of
\cite[Ex.\,4.1, Props.\,4.7,\,4.8]{IKPS19}.
\end{proof}
%


The positive number $\gamma:=1-\gamma_1(h^p)>0$ is called
the \emph{spectral gap} of the quantum channel, and it
measures the loss of information associated with
repeated quantum measurements. The following estimate
on its \emph{semi-classical limit} as $p\fl+\infty$ is central
to this paper.

\begin{theorem}\label{Bpgap}
{\cite[Th.\,3.1, Rmk.\,3.12]{IKPS19}}
There exists a constant $C>0$ such
that for all $p\in\N$, we have
\begin{equation}
\left|1-\gamma_1(h^p)-\frac{\lambda_1(h)}{4\pi p}\right|\leq
\frac{C}{p^2}\,,
\end{equation}
where $\lambda_1(h)>0$ is the first positive eigenvalue of the
Riemannian Laplacian of $(X,g_h^{TX})$ acting on $\cinf(X,\C)$.

Moreover, there exists $l\in\N$ such that
for any bounded subset $K\subset\Met^+(L)$
in $\CC^l$-norm over which the volume map
\cref{nuhdef} is bounded from below,
the constant $C>0$ can be chosen uniformly in
$h\in K$.
\end{theorem}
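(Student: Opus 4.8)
The plan is to transfer the statement to a spectral-gap estimate for the \emph{Berezin transform}, and then to a rescaled Laplacian. Set $\BB_{h^p}:=\sigma_{L^2(h^p)}\circ T_{h^p}:\cinf(X,\C)\to\cinf(X,\C)$. Since for any two linear maps the composites $T\sigma$ and $\sigma T$ have the same nonzero eigenvalues with multiplicities, and $\EE_{h^p}$ is injective by \cref{quantchanprop}, the eigenvalues $\{\gamma_k(h^p)\}$ of $\EE_{h^p}$ are exactly the nonzero eigenvalues of $\BB_{h^p}$; as $\BB_{h^p}(1)=\sigma_{L^2(h^p)}(\Id_{\HH_p})=\Tr[\Pi_p(\cdot)]=1$, the largest one is $\gamma_0=1$ with constant eigenfunction, so $\gamma_1(h^p)$ is the second eigenvalue of $\BB_{h^p}$. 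Equivalently --- and this is the form I would actually use for the estimates --- formula \cref{quatnchantr} together with \cref{Rawnprop} rewrites $\Tr[A^2]-\langle A,\EE_{h^p}(A)\rangle_{\mathrm{tr}}$ as a \emph{quantum variance}, giving the variational formula
\begin{equation*}
1-\gamma_1(h^p)=\min_{\substack{A\in\cL(\HH_p),\ \Tr[A]=0}}\ \frac{1}{\Tr[A^2]}\int_X\Big(\Tr[A^2\Pi_p(x)]-\Tr[A\Pi_p(x)]^2\Big)\rho_{h^p}(x)\,d\nu_h(x)\,,
\end{equation*}
the integrand being the variance of the observable $A$ in the coherent state at $x$, which is manifestly nonnegative.

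The upper bound on $1-\gamma_1(h^p)$ is the easy half. I would feed the near-diagonal asymptotic expansion of the Bergman kernel of $\HH_p$ \cite{DLM06,MM07} into $\BB_{h^p}$ to get $\BB_{h^p}(f)=f-\frac{1}{4\pi p}\Delta_h f+O(p^{-2})$ in $\CC^0$-norm for a fixed $f\in\cinf(X,\C)$, where $\Delta_h$ is the \emph{Riemannian} Laplacian of $(X,g^{TX}_h)$; the Riemannian --- rather than some $d\nu_h$-weighted --- Laplacian appears because the Rawnsley density rescales the reference measure back to the Liouville one, $\rho_{h^p}\,d\nu_h=p^n\,\om_h^n/n!\,(1+O(p^{-1}))$. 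Evaluating the Rayleigh quotient of $\BB_{h^p}$ on a $\Delta_h$-eigenfunction $\phi_1$ for $\lambda_1(h)$, corrected by an $O(p^{-1})$ constant so as to be orthogonal to $1$ in $L^2(\rho_{h^p}d\nu_h)$, yields $\gamma_1(h^p)\geq 1-\frac{\lambda_1(h)}{4\pi p}+O(p^{-2})$.

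The matching lower bound $\gamma_1(h^p)\leq 1-\frac{\lambda_1(h)}{4\pi p}+O(p^{-2})$ is where the real work lies, and I expect it to be the main obstacle. Inserting the Berezin-transform expansion into the Rayleigh quotient of the top eigenfunction of $\BB_{h^p}$ on $1^\perp$ is not enough, since that remainder is only $O(p^{-2})$ as an operator $\CC^m\to\CC^0$ --- a loss of derivatives --- while the eigenfunction may oscillate at scale $p^{-1/2}$. Instead I would compare the quantum-variance quadratic form of the variational formula above directly with a Dirichlet energy: pushing the near-diagonal Bergman expansion one order further and using that the variance of the symbol $f_A:=\Tr[A\Pi_p(\cdot)]$ in the coherent state at $x$ detects $\big|\nabla^{TX}f_A\big|^2$ at scale $p^{-1/2}$, one should obtain
\begin{equation*}
\int_X\Big(\Tr[A^2\Pi_p(x)]-\Tr[A\Pi_p(x)]^2\Big)\rho_{h^p}(x)\,d\nu_h(x)=\frac{1}{4\pi p}\int_X\big|\nabla^{TX}f_A\big|^2_{g^{TX}_h}\,\rho_{h^p}\,d\nu_h+E_p(A)\,,
\end{equation*}
with an error that is \emph{relatively} small: bounded by $p^{-1}$ times the leading term on the right, plus $Cp^{-2}\Tr[A^2]$. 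Since $\Tr[A^2]$ equals $\int_X|f_A|^2\rho_{h^p}\,d\nu_h$ up to a factor $1+O(p^{-1})$, and $f_A$ is $L^2(\rho_{h^p}d\nu_h)$-orthogonal to the constants up to $O(p^{-1})$ when $\Tr[A]=0$, while $\rho_{h^p}\,d\nu_h$ is to leading order the Riemannian volume of $g^{TX}_h$, the Poincaré inequality $\int_X\big|\nabla^{TX}f\big|^2_{g^{TX}_h}\,\om_h^n/n!\geq\lambda_1(h)\int_X|f|^2\,\om_h^n/n!$ on $1^\perp$ closes the estimate, the relative error being absorbed into the leading term.

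Finally, the uniformity of $C$ over $\CC^l$-bounded families $K\subset\Met^+(L)$ on which $d\nu_h$ is bounded from below follows because every ingredient above --- the near-diagonal Bergman kernel expansion \cite{DLM06,MM07}, the Rawnsley density expansion, and the lower bound on $\lambda_1(h)$ enjoyed by such families --- is uniform in the data $(h,d\nu_h)$; this is the content of \cite[Rmk.\,3.12]{IKPS19}.
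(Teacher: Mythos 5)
Your reduction of $\gamma_1(h^p)$ to the spectrum of the Berezin transform $\BB_{h^p}=\sigma_{L^2(h^p)}\circ T_{h^p}$, the quantum-variance variational formula for $1-\gamma_1(h^p)$, and the easy half of the estimate (testing $\BB_{h^p}f=f-\frac{1}{4\pi p}\Delta_h f+O(p^{-2})$ on a corrected $\lambda_1$-eigenfunction) are all correct, and they do reproduce the strategy of the cited reference. Note, however, that the paper itself does not prove \cref{Bpgap}: it quotes it from \cite{IKPS19}, and the only arguments it supplies are the two remarks following the statement, namely that uniformity in $h$ comes from the uniformity of the Bergman kernel expansion of \cite{DLM06}, and that a general volume map reduces to the Liouville case by the Ma--Marinescu twisting trick ($E=\C$ with $|1|_{h^E}^2\,\om_h^n/n!=d\nu_h$), which is also the clean explanation of why the unweighted Riemannian Laplacian of $g^{TX}_h$ appears at order $1/p$; your informal remark that $\rho_{h^p}\,d\nu_h$ rescales back to the Liouville measure is a heuristic version of exactly this point.

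As a standalone proof, though, there is a genuine gap exactly where you flag it. The displayed comparison of the integrated coherent-state variance with $\frac{1}{4\pi p}\int_X|\nabla^{TX} f_A|^2\,\rho_{h^p}\,d\nu_h$, with an error bounded by $p^{-1}$ times the leading term plus $Cp^{-2}\Tr[A^2]$, is asserted (``one should obtain'') rather than proved, and it is precisely the hard analytic content of \cite[Th.\,3.1]{IKPS19}. The obstruction you identify for the naive Rayleigh-quotient argument does not go away by ``pushing the near-diagonal expansion one order further'': remainders in such Bergman/Toeplitz expansions are naturally controlled by operator norms or by higher $\CC^m$-norms of the symbol $f_A=\sigma_{L^2(h^p)}(A)$, and these are not dominated by $\Tr[A^2]$ uniformly in $p$, since $f_A$ may oscillate at scale $p^{-1/2}$; converting the remainders into the trace-norm-relative form you need (so that they can be absorbed into the leading Dirichlet term plus $Cp^{-2}\Tr[A^2]$) is the substantive work of the cited paper. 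So either reproduce that argument in detail, or do what the paper does and cite \cite{IKPS19}, in which case the only points left to justify here are the uniformity over $\CC^l$-bounded families and the passage to an arbitrary volume map via the twisting trick recalled after \cref{Bpgap}.
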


The uniformity in the metric is not explicitly stated in
\cite[Th.\,3.1]{IKPS19}, but as noted in 
\cite[Rmk.\,4.9]{IKPS19}, it readily follows from the
uniformity in the metric of the estimates on the Bergman
kernel of \cite[Th.\,4.18']{DLM06}.

Furthermore, as explained in \cite[Rmk.\,3.12]{IKPS19},
the case of a general volume form $d\nu_h\in\MM(X)$
follows from a trick due to
Ma and Marinescu in \cite[\S\,4.1.9]{MM07}.
This trick is based on the fact that the
$L^2$-product \cref{L2} coincides with the
$L^2$-product associated with the Liouville form
$\om_h^n/n!$ and the Hermitian metric
$h^p\otimes h^E$ on $L^p\otimes E$,
where $E=\C$ is the trivial line bundle and
$h^E\in\Met^+(E)$ is defined by
$|1|_{h^E}^2\,\om_h^n/n!:=d\nu_h$.
This implies in particular that the
Rawnsley function $\til{\rho}_{h^p}\in\cinf(X,\R)$ 
associated with $\om_h^n/n!$ and $h^p\otimes h^E$ as above
satisfies
\begin{equation}
\rho_{h^p}\,d\nu_h=\til{\rho}_{h^p}\,\frac{\om_h^n}{n!}\,.
\end{equation}
This gives
the following version of a classical result on the
asymptotics as $p\to+\infty$
of the Rawnsley function, which is the other
crucial estimate needed in this paper.

\begin{theorem}\label{Bergdiagexp}
{\cite[Th.\,1.3]{DLM06}}
There exist functions
$b_r(h)\in\cinf(X,\R)$ for all $r\in\N$
such that for any $m,\,k\in\N$, there exists $C_{m,k}>0$
such that for all $p\in\N$ big enough,
\begin{equation}\label{Bergdiagexpfla}
\left|\,\rho_{h^p}-p^n\sum_{r=0}^{k-1}\frac{1}{p^r}b_r(h)\,
\right|_{\CC^m}\leq \frac{C_{m,k}}{p^{k}}\,,
\end{equation}
Furthermore, the functions $b_r(h)\in\cinf(X,\R)$, $r\in\N$,
depend
polynomially on $h\in\Met^+(L)$ and its successive derivatives
along $X$, and the function $b_0(h)\in\cinf(X,\R)$
satisfies the identity
\begin{equation}\label{Bergcoeff}
b_0(h)\,d\nu_h=\frac{\om^n_h}{n!}\,.
\end{equation}
Finally, for each $m,\,k\in\N$, there exists $l\in\N$
such that for any bounded subset $K\subset\Met^+(L)$
in $\CC^l$-norm over which the volume map
\cref{nuhdef} is bounded from below,
the constant $C_{m,k}>0$ can be chosen uniformly in
$h\in K$.
\end{theorem}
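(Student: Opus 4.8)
The plan is to reduce the statement to the classical diagonal asymptotic expansion of the Bergman kernel for the Liouville volume form, using precisely the auxiliary-bundle trick recalled above. Fix $h\in\Met^+(L)$, let $E=\C$ be the trivial holomorphic line bundle, and equip it with the Hermitian metric $h^E\in\Met^+(E)$ determined by $|1|_{h^E}^2\,\om_h^n/n!=d\nu_h$; this makes sense since $d\nu_h$ and $\om_h^n/n!$ are positive smooth volume forms, so their ratio is a positive smooth function. The essential observation is that the $L^2$-product \cref{L2} built from $d\nu_h$ and $h^p$ on $H^0(X,L^p)\cong H^0(X,L^p\otimes E)$ is identical to the one built from $\om_h^n/n!$ and $h^p\otimes h^E$ on $L^p\otimes E$. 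Hence the two coherent state projectors of \cref{cohstateprojdef} coincide, and comparing the corresponding instances of \cref{Rawndeffla} yields
\[
\rho_{h^p}\,d\nu_h=\til{\rho}_{h^p}\,\frac{\om_h^n}{n!}\,,
\]
where $\til{\rho}_{h^p}\in\cinf(X,\R)$ is the Rawnsley, equivalently diagonal Bergman, function of $(L^p\otimes E,\,h^p\otimes h^E)$ with respect to $\om_h^n/n!$.

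Next I would invoke the diagonal Bergman kernel expansion in the twisted setting of \cite[Th.\,4.18']{DLM06}, which builds on \cite{Cat99,Lu00,Tia90,Zel98} and allows $E$ to carry an arbitrary Hermitian metric: it provides smooth functions $\til{b}_r(h)\in\cinf(X,\R)$ with $\til{\rho}_{h^p}=p^n\sum_{r=0}^{k-1}p^{-r}\til{b}_r(h)$ up to a $\CC^m$-error of order $p^{n-k}$, and with $\til{b}_0(h)\equiv 1$ since the leading Bergman coefficient under the Liouville normalization is the constant $1$ (consistently with $\int_X\til{\rho}_{h^p}\,\om_h^n/n!=n_p\sim p^n\int_X\om_h^n/n!$). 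Multiplying through by the $p$-independent positive smooth factor $(\om_h^n/n!)/d\nu_h$ transfers the expansion to $\rho_{h^p}$ with $b_r(h)=\til{b}_r(h)\,(\om_h^n/n!)/d\nu_h$; absorbing this bounded factor into the constants gives \cref{Bergdiagexpfla}, and the term $r=0$ reads $b_0(h)\,d\nu_h=\om_h^n/n!$, which is \cref{Bergcoeff}.

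Finally, the polynomial dependence of $b_r(h)$ on $h$ and its derivatives follows from the explicit recursive description of the Bergman coefficients in \cite[Chap.\,4]{MM07}: each $\til{b}_r(h)$ is a universal polynomial in the Chern curvature $R_h$, in the curvature of $h^E$ — a fixed $(1,1)$-form obtained by applying $\partial\dbar$ to $\log\bigl((\om_h^n/n!)/d\nu_h\bigr)$, hence polynomial in $h$ and finitely many of its derivatives for the volume maps considered here — and in the covariant derivatives of these quantities; the factor $(\om_h^n/n!)/d\nu_h$ is of the same type. The uniformity in the metric is then inherited from the uniformity built into \cite[Th.\,4.18']{DLM06}: over a subset of $\Met^+(L)$ bounded in a high enough $\CC^l$-norm and with $d\nu_h$ bounded below, the geometric data $g^{TX}_h$, $R_h$, the curvature of $h^E$, and $(\om_h^n/n!)/d\nu_h$ are uniformly controlled, so the constants coming out of the near-diagonal rescaling and the resolvent estimates for the rescaled Kodaira Laplacian can be chosen uniform. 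I expect this last point to be the main obstacle: \cite[Th.\,4.18']{DLM06} is stated for a single fixed metric, so making the constants uniform requires tracking that the underlying Sobolev and resolvent estimates depend on $h$ only through finitely many of its derivatives together with a lower bound on $d\nu_h$ — exactly the verification alluded to in \cite[Rmk.\,4.9]{IKPS19}.
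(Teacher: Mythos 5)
Your argument is correct and is essentially the paper's own justification: the theorem is presented there as a consequence of \cite[Th.\,1.3, Th.\,4.18']{DLM06} via exactly the auxiliary-bundle trick of \cite[\S\,4.1.9]{MM07} (twisting by the trivial line bundle $E=\C$ with $|1|_{h^E}^2\,\om_h^n/n!=d\nu_h$, giving $\rho_{h^p}\,d\nu_h=\til{\rho}_{h^p}\,\om_h^n/n!$), with the uniformity in $h$ traced back to the uniformity of the Dai--Liu--Ma estimates as noted in \cite[Rmk.\,4.9]{IKPS19}. Your identification of $b_0$ and the polynomial dependence of the coefficients also match the paper's reasoning, so there is nothing to correct.
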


In particular, using \cref{Rawnprop} and the fact that
$\Tr[\Pi_p]=1$, \cref{Bergdiagexp} implies
that the dimension of $\HH_p$
satisfies the following estimate as $p\to+\infty$,
\begin{equation}\label{npexp}
n_p=\Tr[\Id_{\HH_p}]=
\int_X\,\rho_{h^p}(x)\,d\nu_h(x)=p^n\Vol(X,L)+O(p^{n-1})\,,
\end{equation}
where $\Vol(X,L)>0$ is the volume of the Liouville volume map
\cref{Liouville}, which does not depend
on $h\in\Met^+(L)$.

%
%
\subsection{Moment map}
\label{momentsec}

In this Section, we fix $p\in\N$ big enough so that the
Kodaira map \cref{Kod} is well defined and an embedding,
and we consider the space
$\BB(H^0(X,L^p))$ of bases of $H^0(X,L^p)$. For any $\s\in\BB(H^0(X,L^p))$,
we write $H_\s\in\Prod(H^0(X,L^p))$
for the Hermitian product for which it is an orthonormal
basis,
and write $h_\s\in\Met^+(L)$ for the positive Hermitian
metric
defined through \cref{FSdef} by the formula
\begin{equation}\label{hsdef}
h_\s^p:=\FS(H_\s)\in\Met^+(L^p)\,.
\end{equation}
Write $\Herm(\C^{n_p})$ for the space of Hermitian
matrices of $\C^{n_p}$.
The following central tool in the study of
balanced metrics has been
introduced by Donaldson \cite{Don01,Don09} in his moment
map picture for the study of canonical Kähler metrics.

\begin{defi}\label{momentdef}
The \emph{moment map} associated to
$\nu:\Met^+(L)\to\MM(X)$ is the map
\begin{equation}
\mu_\nu:\BB(H^0(X,L^p))\longrightarrow\Herm(\C^{n_p})
\end{equation}
defined for all $\textbf{s}=\{s_j\}_{j=1}^{n_p}
\in\BB(H^0(X,L^p))$ by the formula
\begin{equation}
\mu_\nu(\textbf{s})
:=\left(\int_X\,
\<s_j(x),s_k(x)\>_{h^p_\s}\,
d\nu_{h_\textbf{s}}(x)\right)_{j,\,k=1}^{n_p}
-\frac{\Vol(d\nu_{h_\textbf{s}})}{n_p}\Id_{\C^{n_p}}\,.
\end{equation}
\end{defi}

The fundamental role of this moment map in the study of
the balanced products of \cref{baldef} comes from the
following basic result.

\begin{prop}\label{momentbal}
For any $\s\in\BB(H^0(X,L^p))$, the induced Hermitian product
$H_\s\in\Prod(H^0(X,L^p))$ is balanced with respect to
$\nu:\Met^+(L)\to\MM(X)$ if and only if
\begin{equation}
\mu_\nu(\s)=0\,.
\end{equation}
\end{prop}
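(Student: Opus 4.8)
The statement to prove is Proposition~\ref{momentbal}: for $\s\in\BB(H^0(X,L^p))$, the product $H_\s$ is balanced with respect to $\nu$ if and only if $\mu_\nu(\s)=0$. The plan is to unwind both conditions into the same matrix identity by exploiting the compatibility between $H_\s$ and the Fubini--Study metric $h_\s^p=\FS(H_\s)$, together with the characterization of balancedness via the Rawnsley function in Proposition~\ref{balmet=balemb}.

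First I would compute $\Hilb_\nu(h_\s^p)$ in the basis $\s$. By the definition \eqref{L2intro} of the Hilbert map and the definition of the moment map, the Gram matrix of $\Hilb_\nu(h_\s^p)$ with respect to $\s$ is exactly
\begin{equation}\label{eq:grammatrix}
\frac{n_p}{\Vol(d\nu_{h_\s})}\left(\int_X\<s_j(x),s_k(x)\>_{h_\s^p}\,d\nu_{h_\s}(x)\right)_{j,k=1}^{n_p}
=\frac{n_p}{\Vol(d\nu_{h_\s})}\left(\mu_\nu(\s)+\frac{\Vol(d\nu_{h_\s})}{n_p}\Id\right).
\end{equation}
Thus $\mu_\nu(\s)=0$ is equivalent to this Gram matrix being $\Id_{\C^{n_p}}$, i.e.\ to $\s$ being orthonormal for $\Hilb_\nu(h_\s^p)$, which by definition of $H_\s$ means $\Hilb_\nu(h_\s^p)=H_\s$, i.e.\ $\Hilb_\nu\circ\FS(H_\s)=H_\s$. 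By Definition~\ref{baldef} this is precisely the statement that $H_\s$ is a balanced product. So the two conditions are literally the same once one reads off the Gram matrix, and the only thing to check carefully is that $\s$ being orthonormal for a Hermitian product $H$ forces $H=H_\s$ — which is immediate from the definition of $H_\s$ as the unique product making $\s$ orthonormal.

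An alternative route, which I would mention as a consistency check, goes through Proposition~\ref{balmet=balemb}: $H_\s$ is balanced iff $h_\s^p$ is balanced iff $\rho_{h_\s^p}\equiv n_p/\Vol(d\nu_{h_\s})$; integrating \eqref{Rawndeffla} against $d\nu_{h_\s}$ over the orthonormal basis $\s$ for $L^2(h_\s^p)$ recovers the Gram matrix $\bigl(\int_X\<s_j,s_k\>_{h_\s^p}\,d\nu_{h_\s}\bigr)$ on the one side and $\rho_{h_\s^p}^{-1}\,\delta_{jk}$-type data on the other, again pinning the moment map to zero. I do not expect any genuine obstacle here: the proposition is essentially a bookkeeping lemma translating Definition~\ref{momentdef} into Definition~\ref{baldef}, and the main (very mild) point is to keep straight that $h_\s^p=\FS(H_\s)$ so that the Hermitian metric appearing inside $\mu_\nu(\s)$ and inside $\Hilb_\nu$ is one and the same, making \eqref{eq:grammatrix} a genuine identity rather than a comparison of two different objects.
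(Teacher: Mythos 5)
Your proof is correct and follows essentially the same route as the paper: both read off the Gram matrix of $\Hilb_\nu(\FS(H_\s))$ in the basis $\s$, observe that $\mu_\nu(\s)=0$ is exactly the condition that this Gram matrix be the identity, and conclude via the fact that orthonormality of $\s$ characterizes $H_\s$. The alternative check via the Rawnsley function is fine but unnecessary.
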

\begin{proof}
Comparing \cref{FSdef} and formula
\cref{L2intro} with \cref{momentdef} and
formula \cref{hsdef}, we see that
$\s=\{s_j\}_{j=1}^{n_p}\in\BB(H^0(X,L^p))$
satisfies $\mu_\nu(\s)=0$ if and only if
\begin{equation}
\left(\<s_j,s_k\>_{\Hilb_\nu\left(\FS(H_\s)\right)}\right)_{j,\,k=1}
^{n_p}=\Id_{\C^{n_p}}\,,
\end{equation}
i.e. if and only if $\s=\{s_j\}_{j=1}^{n_p}$ is an orthonormal
basis for $\Hilb_\nu\left(\FS(H_\s)\right)\in\Prod(H^0(X,L^p))$.
But this property characterizes $H_\s\in\Prod(H^0(X,L^p))$,
so that $\mu_\nu(\s)=0$ if and only if
\begin{equation}
\Hilb_\nu\left(\FS(H_\s)\right)=H_\s\,,
\end{equation}
which is the \cref{baldef} of a balanced product.
\end{proof}

In the following proposition, we give useful characterizations
for the Fubini-Study metric of \cref{FSdef} and
the Rawnsley function of \cref{Rawnprop}
in terms of bases of $H^0(X,L^p)$, recovering
their familiar descriptions in this context.

\begin{prop}\label{sumprop}
For any $h^p\in\Met^+(L^p)$,
the associated Rawnsley function
$\rho_{h^p}\in\cinf(X,L^p)$
is given for any $x\in X$ by the formula
\begin{equation}\label{Rawnsumfla}
\rho_{h^p}(x)=\sum_{j=1}^{n_p}\,|s_j(x)|^2_{h^p}\,,
\end{equation}
where $\{s_j\}_{j=1}^{n_p}\in\BB(H^0(X,L^p))$
is an orthonormal basis for $L^2(h^p)$.

For any basis $\s=\{s_j\}_{j=1}^{n_p}\in\BB(H^0(X,L^p))$,
the induced
Fubini-Study metric $h_\s^p\in\Met^+(L^p)$
is characterized by the following formula, for any $x\in X$,
\begin{equation}\label{FSsumfla}
\sum_{j=1}^{n_p}|s_j(x)|^2_{h^p_\s}=1\,.
\end{equation}
In particular, we have $\Tr[\mu_\nu(\s)]=0$
for all $\s\in\BB(H^0(X,L^p))$.
\end{prop}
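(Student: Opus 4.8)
The plan is to reduce both formulas to the defining identities of \cref{Rawnprop} and \cref{FSdef}, using throughout that the coherent state projectors $\Pi_p(x)$ and $\Pi_{H_\s}(x)$ are rank-one orthogonal projectors, so that $\Tr[\Pi_p(x)]=\Tr[\Pi_{H_\s}(x)]=1$ for all $x\in X$.

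For \cref{Rawnsumfla}, I would evaluate the Rawnsley identity \cref{Rawndeffla} on the pair $s_1=s_2=s_j$, where $\{s_j\}_{j=1}^{n_p}$ is an $L^2(h^p)$-orthonormal basis, and sum over $j$. The right-hand side becomes $\sum_{j}|s_j(x)|^2_{h^p}$, while the left-hand side is $\rho_{h^p}(x)\sum_j\<\Pi_p(x)s_j,s_j\>_{L^2(h^p)}=\rho_{h^p}(x)\Tr[\Pi_p(x)]=\rho_{h^p}(x)$. (Equivalently, one takes the trace in the identity $\rho_{h^p}(x)\Pi_p(x)=\ev_x^*\ev_x$ from the proof of \cref{Rawnprop}, computing $\Tr[\ev_x^*\ev_x]$ in the orthonormal basis $\{s_j\}$.)

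For \cref{FSsumfla}, given $\s=\{s_j\}_{j=1}^{n_p}\in\BB(H^0(X,L^p))$, this basis is $H_\s$-orthonormal by definition of $H_\s$, so applying \cref{hFSdef} with $H=H_\s$ and summing over $j$ gives $\sum_j|s_j(x)|^2_{h_\s^p}=\sum_j\<\Pi_{H_\s}(x)s_j,s_j\>_{H_\s}=\Tr[\Pi_{H_\s}(x)]=1$. To see that this property characterizes $h_\s^p$, I would argue that since the Kodaira map \cref{Kod} is an embedding, the sections $\{s_j\}$ have no common zero, hence any $h^p\in\Met^+(L^p)$ can be written $h^p=e^{-\phi}h_\s^p$ for a unique $\phi\in\cinf(X,\R)$; then $\sum_j|s_j(x)|^2_{h^p}=e^{-\phi(x)}\sum_j|s_j(x)|^2_{h_\s^p}=e^{-\phi(x)}$ by the computation just made, so imposing $\sum_j|s_j(x)|^2_{h^p}=1$ forces $\phi\equiv 0$ and $h^p=h_\s^p$. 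Finally, for $\Tr[\mu_\nu(\s)]=0$, I take the trace in \cref{momentdef}: the diagonal sum of the integral term is $\int_X\sum_j|s_j(x)|^2_{h_\s^p}\,d\nu_{h_\s}(x)=\int_X d\nu_{h_\s}=\Vol(d\nu_{h_\s})$ by \cref{FSsumfla}, which exactly cancels the trace $\Vol(d\nu_{h_\s})$ of the scalar term.

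The argument is essentially bookkeeping; the only point needing an observation beyond unwinding definitions is the uniqueness in the ``characterized by'' clause of \cref{FSsumfla}, which rests on the embedding property of the Kodaira map to guarantee that two positive Hermitian metrics on $L^p$ differ by a globally defined smooth positive function.
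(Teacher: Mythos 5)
Your proof is correct and follows essentially the same route as the paper's: evaluate the defining identities of \cref{Rawnprop} and \cref{FSdef} on the relevant orthonormal basis, sum over $j$, and use that the coherent state projector has trace $1$. The extra details you supply — writing $h^p=e^{-\phi}h^p_\s$ to justify the word ``characterized,'' and the explicit cancellation of $\Vol(d\nu_{h_\s})$ in $\Tr[\mu_\nu(\s)]$ — are exactly the steps the paper leaves implicit, and they are right.
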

\begin{proof}
By \cref{Rawnprop}, if $\{s_j\}_{j=1}^{n_p}\in\BB(H^0(X,L^p))$
is an orthonormal basis for $L^2(h^p)$,
then we have
\begin{equation}
\sum_{j=1}^{n_p}\,|s_j|^2_{h^p}=
\sum_{j=1}^{n_p}\rho_{h^p}\,\<\Pi_ps_j,s_j\>_{L^2(h^p)}
=\rho_{h^p}\,\Tr[\Pi_p]=\rho_{h^p}\,,
\end{equation}
which shows formula \cref{Rawnsumfla}.
On the other hand, any
$\s=\{s_j\}_{j=1}^{n_p}\in\BB(H^0(X,L^p))$ is by definition
an orthonormal
basis for $H_\s\in\Prod(H^0(X,L^p))$, so that
by \cref{FSdef} we get
\begin{equation}
\sum_{j=1}^{n_p}|s_j|^2_{h^p_\s}=
\sum_{j=1}^{n_p}\<\Pi_{H_\s}s_j,s_j\>_{H_\s}
=\Tr[\Pi_{H_\s}]=1\,,
\end{equation}
which clearly characterizes $h^p_\s\in\Met^+(L^p)$.
By \cref{momentdef}, this readily implies $\Tr[\mu_\nu(\s)]=0$.
\end{proof}

Recall \cref{cohstateprojdef} for the Berezin symbol associated
with a Hermitian product $H\in\Prod(H^0(X,L^p))$.

\begin{prop}\label{FSvar}
For any $\textbf{s}\in\BB(H^0(X,L^p))$ and
$B\in\cL(H^0(X,L^p),H_\s)$,
we have
\begin{equation}\label{hpeBsfla}
\sigma_{H_\s}(e^{2B})\,h^p_{e^B\s}=h^p_{\s}\,.
\end{equation}
\end{prop}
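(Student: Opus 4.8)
The plan is to verify the identity directly, by checking that the metric on the right-hand side has the property that characterizes the Fubini--Study metric attached to the basis $e^B\s$, and then invoking uniqueness. Concretely, set
\[
\tilde h:=\sigma_{H_\s}(e^{2B})^{-1}\,h^p_\s\in\Met^+(L^p)\,,
\]
which is well defined since $e^{2B}$ is positive definite with respect to $H_\s$ and $\Pi_{H_\s}(x)$ is a nonzero orthogonal projector, so $\sigma_{H_\s}(e^{2B})(x)=\Tr[e^{2B}\Pi_{H_\s}(x)]>0$ for all $x\in X$. By \cref{sumprop}, in particular formula \cref{FSsumfla}, it is enough to show that $\sum_{j=1}^{n_p}|(e^Bs_j)(x)|^2_{\tilde h}=1$ for all $x\in X$, where $\s=\{s_j\}_{j=1}^{n_p}$: by the uniqueness built into that characterization this forces $\tilde h=\FS(H_{e^B\s})=h^p_{e^B\s}$, which is the claimed identity after multiplying through by $\sigma_{H_\s}(e^{2B})$.

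Since $\tilde h$ is the function $\sigma_{H_\s}(e^{2B})^{-1}$ times $h^p_\s$, one has $|v|^2_{\tilde h}=\sigma_{H_\s}(e^{2B})(x)^{-1}|v|^2_{h^p_\s}$ for $v\in L^p_x$, so the one computation to carry out is that of $\sum_j|(e^Bs_j)(x)|^2_{h^p_\s}$. For this I would use $h^p_\s=\FS(H_\s)$ and formula \cref{hFSdef}, giving $|(e^Bs_j)(x)|^2_{h^p_\s}=\<\Pi_{H_\s}(x)\,e^Bs_j,\,e^Bs_j\>_{H_\s}$; summing over $j$ and using that $\{s_j\}$ is $H_\s$-orthonormal turns the sum into a trace, $\sum_j\<(e^B)^*\Pi_{H_\s}(x)e^B s_j,s_j\>_{H_\s}=\Tr\big[(e^B)^*\Pi_{H_\s}(x)e^B\big]$. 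Finally, $B\in\cL(H^0(X,L^p),H_\s)$ is Hermitian, so $(e^B)^*=e^B$, and cyclicity of the trace yields $\Tr[e^B\Pi_{H_\s}(x)e^B]=\Tr[e^{2B}\Pi_{H_\s}(x)]=\sigma_{H_\s}(e^{2B})(x)$. Hence $\sum_j|(e^Bs_j)(x)|^2_{\tilde h}=\sigma_{H_\s}(e^{2B})(x)^{-1}\sigma_{H_\s}(e^{2B})(x)=1$, as needed.

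I do not expect a genuine obstacle: the statement is essentially a bookkeeping identity describing how the Fubini--Study metric changes under the $\GL$-action on bases, expressed through the Berezin symbol. The only points needing a little care are (i) tracking the convention for how rescaling a Hermitian metric on $L^p$ rescales norms of sections, so the scalar factor lands on the correct side of the equation; (ii) the passage from a sum over an $H_\s$-orthonormal basis to a trace, which is exactly the manipulation already used in the proofs of \cref{sumprop} and \cref{quantchanprop}; and (iii) the use of Hermiticity of $B$ to identify $(e^B)^*$ with $e^B$. An alternative, equally short route is to note that $h^p_{e^B\s}$ and $h^p_\s$ are both positive Hermitian metrics on $L^p$, hence differ by a positive smooth function, and to pin that function down by evaluating $\sum_j|(e^Bs_j)(x)|^2$ in both metrics via \cref{FSsumfla}; this reduces to the same computation.
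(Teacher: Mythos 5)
Your proposal is correct and follows essentially the same route as the paper: both compute $\sigma_{H_\s}(e^{2B})(x)=\Tr[e^B\Pi_{H_\s}(x)e^B]=\sum_j|(e^Bs_j)(x)|^2_{h^p_\s}$ using Hermiticity of $B$ and the $H_\s$-orthonormality of $\s$, and then conclude via the normalization \cref{FSsumfla} characterizing the Fubini--Study metric of the basis $e^B\s$. Your reformulation in terms of the rescaled metric $\tilde h$ is just a cosmetic repackaging of the same argument.
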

\begin{proof}
By \cref{cohstateprojdef,FSdef},
for any $B\in\cL(H^0(X,L^p),H_\s)$ and
writing $\textbf{s}=\{s_j\}_{j=1}^{n_p}$, we have
\begin{equation}\label{G*GBersymb}
\begin{split}
\sigma_{H_\s}(e^{2B})
&=\Tr[e^B\Pi_{H_\s}e^B]\\
&=\sum_{j=1}^{n_p}\<\Pi_{H_\s}e^Bs_j,e^Bs_j\>_{H_\s}
=\sum_{j=1}^{n_p}\left|e^{B}s_j\,\right|_{h^p_\s}^2\,.
\end{split}
\end{equation}
As $\{e^Bs_j\}_{j=1}^{n_p}$ is an orthonormal
basis for $H_{e^B\s}\in\Prod(H^0(X,L^p))$ by definition,
this shows the result by the characterization
of the Fubini-Study metric given in \cref{sumprop}.
\end{proof}

Consider now the free and transitive action of
$\GL(\C^{n_p})$ on $\BB(H^0(X,L^p))$ via the formula
\begin{equation}\label{GLact}
G.\s:=\left\{\sum_{k=1}^{n_p} G_{jk}s_k\right\}_{j=1}^{n_p}\,,
\end{equation}
for any $G=(G_{jk})_{j,\,k=1}^{n_p}\in\GL(\C^{n_p})$
and $\s=\{s_j\}_{j=1}^{n_p}\in\BB(H^0(X,L^p))$.
By derivation, this induces a canonical
identification of tangent spaces
\begin{equation}\label{isomEnd}
T_\s\BB(H^0(X,L^p))\simeq\End(\C^{n_p})\,,
\end{equation}
making $\BB(H^0(X,L^p))$ into a complete Riemannian manifold
via the Hermitian product
defined on $A,\,B\in\End(\C^{n_p})$ by the formula
\begin{equation}\label{trnorm}
\<A,B\>_{tr}:=\Tr[AB^*]\,.
\end{equation}
Restricting to Hermitian matrices
$\Herm(\C^{n_p})\subset\End(\C^{n_p})$,
this induces for all $\s\in\BB(H^0(X,L^p))$
an isometry
\begin{equation}\label{isomHerm}
\Herm(\C^{n_p})\simeq\cL(H^0(X,L^p),H_\s)\,.
\end{equation}
The unitary group
$U(n_p)\subset\GL(\C^{n_p})$ acts by isometries
on $\BB(H^0(X,L^p))$, and
the quotient map
\begin{equation}\label{symmap}
\begin{split}
\BB(H^0(X,L^p))&\longrightarrow\Prod(H^0(X,L^p))\\
\s~&\longmapsto~H_\s
\end{split}
\end{equation}
makes in turn
$\Prod(H^0(X,L^p))$ into
a complete Riemannian manifold, whose geodesics are of the
form
\begin{equation}\label{geod}
t\longmapsto H_{e^{tA}\s}\in\Prod(H^0(X,L^p)),\,t\in\R\,,
\end{equation}
for all $A\in\Herm(\C^{n_p})$. 

We will write $\Pi_\s:X\to\Herm(\C^{n_p})$ and $\sigma_\s:\Herm(\C^{n_p})\to\cinf(X,\R)$ for the coherent state
projector and the Berezin symbol of
\cref{cohstateprojdef} associated with $H_\s\in\Prod(H^0(X,L^p))$
under the identification \cref{isomHerm}
induced by any $\s\in\BB(H^0(X,L^p))$.
In these notations, we have the following comparison formula for
the Berezin symbols associated with two different bases
in the corresponding identifications.
\begin{prop}\label{FSvar2}
For any $A,\,B\in\Herm(\C^{n_p})$ and $\s\in\BB(H^0(X,L^p))$,
we have
\begin{equation}
\sigma_{e^B\s}(A)=\sigma_{\s}(e^{2B})^{-1}
\,\sigma_{\s}(e^BAe^B)\,.
\end{equation}
\end{prop}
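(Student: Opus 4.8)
The plan is to unravel the two identifications \cref{isomHerm} — the one attached to $\s$ and the one attached to $e^B\s$ — and thereby reduce the statement to a trace identity on $H^0(X,L^p)$, the only substantial input being \cref{FSvar}. First I would write $\s=\{s_j\}_{j=1}^{n_p}$ and regard $A$ and $B$ as the endomorphisms of $H^0(X,L^p)$ corresponding to these matrices under \cref{isomHerm} attached to $\s$; they are then Hermitian for $H_\s$, and $e^B\s=\{e^Bs_j\}_{j=1}^{n_p}$ with $e^B$ self-adjoint for $H_\s$. A direct check on the basis $e^B\s$, which is orthonormal for $H_{e^B\s}$, shows that under \cref{isomHerm} attached to $e^B\s$ the matrix $A$ corresponds to the endomorphism $e^BAe^{-B}$, Hermitian for $H_{e^B\s}$, whereas the matrices $e^{2B}$ and $e^BAe^B$ correspond, under \cref{isomHerm} attached to $\s$, to the endomorphisms $e^{2B}$ and $e^BAe^B$ of $H^0(X,L^p)$. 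With these conventions, \cref{cohstateprojdef} gives
\[
\sigma_{e^B\s}(A)=\Tr\left[e^BAe^{-B}\,\Pi_{e^B\s}\right],
\]
where $\Pi_{e^B\s}\colon X\to\cL(H^0(X,L^p),H_{e^B\s})$ is the coherent state projector of $H_{e^B\s}$.

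The key step is then to express $\Pi_{e^B\s}$ through $\Pi_\s$. Since $\{e^Bs_j\}$ and $\{s_j\}$ are orthonormal for $H_{e^B\s}$ and $H_\s$ respectively and $e^B$ is self-adjoint for $H_\s$, one has $\<u,v\>_{H_{e^B\s}}=\<e^{-2B}u,v\>_{H_\s}$ for all $u,v\in H^0(X,L^p)$. Feeding this into the characterization of the Fubini–Study metric in \cref{FSdef}, applied to both $H_{e^B\s}$ and $H_\s$, and using formula \cref{hpeBsfla} of \cref{FSvar} — which, as $h^p_{e^B\s}=\FS(H_{e^B\s})$ and $h^p_\s=\FS(H_\s)$ are proportional Hermitian metrics on $L^p$, amounts to $\<\cdot,\cdot\>_{h^p_{e^B\s}}=\sigma_\s(e^{2B})^{-1}\<\cdot,\cdot\>_{h^p_\s}$ — I expect to obtain, for all $x\in X$,
\[
\Pi_{e^B\s}(x)=\sigma_\s(e^{2B})(x)^{-1}\,e^{2B}\,\Pi_\s(x).
\]
As a consistency check: since $\Pi_\s(x)$ is a rank-one projector and $\Tr[e^{2B}\Pi_\s(x)]=\sigma_\s(e^{2B})(x)$, the right-hand side is idempotent, has kernel $\Ker\Pi_\s(x)=\Kod_p(x)$, and is self-adjoint for $H_{e^B\s}$, so it is indeed $\Pi_{e^B\s}(x)$.

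Finally, substituting this into the formula for $\sigma_{e^B\s}(A)$, using cyclicity of the trace together with $e^{-B}e^{2B}=e^B$, and then $\Tr[e^BAe^B\,\Pi_\s(x)]=\sigma_\s(e^BAe^B)(x)$, I would conclude, for all $x\in X$,
\[
\sigma_{e^B\s}(A)(x)=\sigma_\s(e^{2B})(x)^{-1}\,\Tr\left[e^BAe^B\,\Pi_\s(x)\right]=\sigma_\s(e^{2B})(x)^{-1}\,\sigma_\s(e^BAe^B)(x),
\]
which is the claim. I expect the only real obstacle to be bookkeeping: keeping straight the two distinct identifications \cref{isomHerm} and the change of inner product from $H_\s$ to $H_{e^B\s}$; once $\Pi_{e^B\s}$ is expressed through $\Pi_\s$, the remainder is a one-line trace manipulation. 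As an alternative route that sidesteps the projector formula, one can run a pure matrix computation starting from $\sigma_\s(A)(x)=\sum_{j,k}A_{jk}\<s_j(x),s_k(x)\>_{h^p_\s}$ — immediate from \cref{cohstateprojdef} and \cref{FSdef} — substituting $(e^Bs_j)(x)=\sum_l(e^B)_{lj}\,s_l(x)$ and using \cref{FSvar} to replace $h^p_{e^B\s}$ by $h^p_\s$.
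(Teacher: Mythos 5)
Your proposal is correct, but your primary route differs from the paper's. The paper proves \cref{FSvar2} by the coordinate computation you relegate to your closing remark: writing $e^B\s=\{\til{s}_j\}$ with $\til{s}_j=\sum_k(e^B)_{jk}s_k$, expanding $\sigma_{e^B\s}(A)=\sum_{j,k}A_{jk}\<\til{s}_k,\til{s}_j\>_{h^p_{e^B\s}}$ via \cref{cohstateprojdef} and \cref{FSdef}, and substituting $h^p_{e^B\s}=\sigma_\s(e^{2B})^{-1}h^p_\s$ from \cref{FSvar}; the matrix $e^BAe^B$ then appears by resumming the indices, using that $e^B$ is Hermitian. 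Your main argument instead isolates the transformation law of the coherent state projector, $\Pi_{e^B\s}(x)=\sigma_\s(e^{2B})(x)^{-1}e^{2B}\Pi_\s(x)$, and your verification of it (idempotency via $\Pi_\s e^{2B}\Pi_\s=\sigma_\s(e^{2B})\Pi_\s$ for a rank-one projector, identification of the kernel, and self-adjointness for $H_{e^B\s}$ using $\<u,v\>_{H_{e^B\s}}=\<e^{-2B}u,v\>_{H_\s}$) is a complete characterization, so the "consistency check" is in fact the proof; the proposition then follows by a one-line trace manipulation. This is sound, and it buys a reusable structural statement about how $\Pi_\s$ varies under the $\GL(\C^{n_p})$-action, at the cost of more delicate bookkeeping with the two instances of the identification \cref{isomHerm} (in particular the conjugation $A\mapsto e^BAe^{-B}$ relating the two identifications of $A$, which you handle correctly). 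The paper's computation is shorter and avoids any statement about $\Pi_{e^B\s}$ itself.
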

\begin{proof}
Write $\s=:\{s_j\}_{j=1}^{n_p}$
and $e^B\s=:\{\til{s}_j\}_{j=1}^{n_p}$, so that
by definition \cref{GLact}
of the action and
writing $e^B:=(G_{jk})_{j,k=1}^{n_p}$, we have
$\til{s}_j=\sum_{k=1}^{n_p}G_{jk}s_k$ for all $1\leq j\leq n_p$.
Then using \cref{FSdef}, \cref{FSvar} and the fact that
$e^B\in\GL(\C^{n_p})$ is Hermitian, we get
\begin{equation}
\begin{split}
\sigma_{e^B\s}(A)&=\sum_{j,k=1}^{n_p}
\<A_{jk}\til{s}_k,\til{s}_j\>_{h^p_{e^B\s}}=\sigma_{\s}(e^{2B})^{-1}
\sum_{j,k,l,m=1}^{n_p}\<A_{jk}G_{kl}s_l,
G_{jm}s_m\>_{h^p_{\s}}\\
&=\sigma_{\s}(e^{2B})^{-1}
\sum_{l,m=1}^{n_p}\left(e^BAe^B\right)_{ml}\<s_l,s_m\>_{h^p_{\s}}\,.
\end{split}
\end{equation}
This implies the result by \cref{cohstateprojdef,FSdef}.
\todo{Typo corrigée: \cref{cohstatecoord}$\mapsto$\cref{FSdef}. Changer?}
\end{proof}

\section{Anticanonically balanced metrics}
\label{anticansec}

In this Section, we consider the general set-up of 
\cref{setting} in the particular case
when $X$ is a Fano manifold,
meaning that its anticanonical line bundle
$K_X^*:=\det(T^{(1,0)}X)$ is ample. We take $L:=K_X^*$
and consider the
anticanonical volume map
$\nu:\Met(K_X^*)\longrightarrow\MM(X)$
defined by formula \cref{dnucandef}.


\subsection{Kähler-Einstein metrics and anticanonical volume map}
\label{KEsec}
%
%
%

A Kähler form $\om\in\Om^2(X,\R)$ on a
compact complex manifold $X$ induces a natural
Hermitian metric $h_\om\in\Met(K_X^*)$,
defined using the anticanonical volume form
\cref{dnucandef} by the formula
\begin{equation}
\frac{\om^n}{n!}=d\nu_{h_\om}\,.
\end{equation}
Conversely, a positive Hermitian metric $h\in\Met^+(K_X^*)$
induces a Kähler form $\om_h\in\Om^2(X,\R)$
as in \cref{preq}, but $\om_{h_\om}$ do not coincide with
$\om$ in general. This motivates
the following important notion of Kähler geometry.

\begin{defi}\label{KEdef}
A positive Hermitian metric $h\in\Met^+(K_X^*)$ is called
\emph{Kähler-Einstein} if there exists a constant $c>0$ such that
the associated Kähler form $\om_h$ satisfies
\begin{equation}
\frac{\om^n_h}{n!}=c\,d\nu_{h}\,.
\end{equation}
The associated polarized Kähler metric $g^{TX}_h$ is then called a
\emph{Kähler-Einstein metric}.
\end{defi}

Let us recall some basic facts about such Kähler-Einstein
metrics, which can be found for instance in
\cite[Chap.\,3\,--\,4]{Sze14}. First of all, for a positive
Hermitian metric $h\in\Met^+(K_X^*)$ and in our convention
\cref{preq} for the associated Kähler form
$\om_h\in\Om^2(X,\R)$, the Kähler-Einstein
condition of \cref{KEdef} is equivalent to the identity
\begin{equation}
\om_h=\frac{1}{2\pi}\Ric(g^{TX}_h)\,,
\end{equation}
where $\Ric(g^{TX}_h)\in\Om^2(X,\R)$ is the \emph{Ricci form}
of $(X,J,g^{TX}_h)$. This implies that the
\emph{scalar curvature} $\scal(g^{TX}_h)$ of
$(X,g^{TX}_h)$ is constant, given by
\begin{equation}
\scal(g^{TX}_h)=4\pi n\,.
\end{equation}
We then have the following classical result of Lichnerowicz and
Matsushima, in a form which can be found in
\cite[Chap.\,3]{Gau} and which will be
a key input in our proof of \cref{mainth}.
Write $\Aut(X)$ for the automorphism group of $X$
as a complex manifold.
\begin{theorem}{\cite{Lic59,Mat57}}
\label{lambda1>4pi}
Assume that $\Aut(X)$ is discrete, and let $h_\infty\in\Met^+(K_X^*)$
be Kähler-Einstein.
Then the first positive eigenvalue $\lambda_1(h_\infty)>0$ of the
Riemannian Laplacian $\Delta_{h_\infty}$
of $(X,g^{TX}_{h_\infty})$ acting on $\cinf(X,\C)$ satisfies
\begin{equation}
\lambda_1(h_\infty)>4\pi\,.
\end{equation}
\end{theorem}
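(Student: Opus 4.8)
I would prove this by combining Lichnerowicz's Bochner-type estimate for the first eigenvalue of a Kähler–Einstein manifold with the vanishing of holomorphic vector fields on a manifold with discrete automorphism group; this is essentially the argument presented in \cite[Chap.\,3]{Gau}, and I only sketch its structure.

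First I would normalize. Write $g:=g^{TX}_{h_\infty}$ and $\om_\infty:=\om_{h_\infty}$. By the identities recalled just before the statement, the Kähler–Einstein condition $\om_\infty=\frac{1}{2\pi}\Ric(g)$ says precisely that $g$ is Einstein with $\Ric(g)=2\pi g$ as symmetric $2$-tensors (equivalently $\scal(g)=4\pi n$). Since $\Delta:=\Delta_{h_\infty}$ is a real elliptic operator, its eigenspaces on $\cinf(X,\C)$ have real bases, so I may fix a real non-constant $f\in\cinf(X,\R)$ with $\Delta f=\lambda_1(h_\infty)\,f$ and, necessarily, $\int_X f\,\om_\infty^n/n!=0$. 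It then suffices to prove that $\lambda_1(h_\infty)\geq 4\pi$ always, with equality only if $X$ carries a non-zero holomorphic vector field.

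For the inequality I would introduce $V:=\grad^{1,0}f\in\cinf(X,T^{(1,0)}X)$, the $(1,0)$-part of the Riemannian gradient of $f$; its covariant derivative $\dbar V$ along the anti-holomorphic directions vanishes exactly when $V$ is holomorphic. Starting from the Bochner formula for $|\grad f|^2$, integrating by parts on the compact Kähler manifold $(X,\om_\infty)$, using the Kähler identities to decompose the complex Hessian of $f$, and substituting $\Ric(g)=2\pi g$, one arrives at an identity of the form
\begin{equation*}
\int_X\bigl|\dbar V\bigr|^2\,\frac{\om_\infty^n}{n!}
=\frac{\lambda_1(h_\infty)}{4}\,\bigl(\lambda_1(h_\infty)-4\pi\bigr)\int_X f^2\,\frac{\om_\infty^n}{n!}\,,
\end{equation*}
where I have used $\int_X|\grad f|^2\,\om_\infty^n/n!=\lambda_1(h_\infty)\int_X f^2\,\om_\infty^n/n!$; the detailed computation is the one carried out in \cite[Chap.\,3]{Gau} and \cite[Chap.\,3\,--\,4]{Sze14}. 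Since the left-hand side is $\geq 0$ while $\int_X f^2\,\om_\infty^n/n!>0$ and $\lambda_1(h_\infty)>0$, this forces $\lambda_1(h_\infty)\geq 4\pi$. If equality held, the same identity would give $\dbar V=0$, i.e.\ $V=\grad^{1,0}f$ would be a holomorphic vector field; as $f$ is non-constant, its real gradient $\grad f=V+\overline V$ is not identically zero, hence $V\not\equiv 0$, so $H^0(X,T^{(1,0)}X)\neq 0$. But $\Aut(X)$ is a complex Lie group with Lie algebra $H^0(X,T^{(1,0)}X)$, which contradicts the hypothesis that $\Aut(X)$ is discrete — this is where the argument of Matsushima \cite{Mat57} enters — and therefore $\lambda_1(h_\infty)>4\pi$.

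The main obstacle is the Bochner–Kodaira–Weitzenböck computation underlying the displayed identity, in particular the factor-$2$ gain over the real Lichnerowicz–Obata bound $\lambda_1\geq\frac{4\pi n}{2n-1}$ that is special to the Kähler case and is exactly what makes $4\pi$ the sharp constant (attained on $\CP^n$); this step is standard but must be handled carefully with the constants, and in writing the proof I would quote it from \cite[Chap.\,3]{Gau} rather than reproduce it.
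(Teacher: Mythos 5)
The paper does not prove this statement itself but quotes it as the classical Lichnerowicz--Matsushima result from \cite[Chap.\,3]{Gau}, and your sketch is precisely that classical argument: the Bochner--Lichnerowicz identity on a Kähler--Einstein manifold with $\Ric(g)=2\pi g$ gives $\lambda_1\geq 4\pi$ with equality forcing $\dbar\,\grad^{1,0}f=0$, and a non-zero holomorphic vector field is excluded by the discreteness of $\Aut(X)$. Your normalization and the constant in the displayed identity are consistent with the paper's conventions ($\scal=4\pi n$, i.e.\ Einstein constant $2\pi$, and $\Delta=2\Delta_{\dbar}$ on functions), so the proposal is correct and takes the same route as the cited source.
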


We will often need the
following variation formula for the anticanonical volume
form \cref{dnucandef}.

\begin{prop}\label{dnuef/dnu}
The anticanonical volume form \cref{dnucandef} satisfies
the following formula, for any $f\in\cinf(X,\R)$ and
$h\in\Met(K_X^*)$,
\begin{equation}
d\nu_{e^fh}=e^{f}\,d\nu_h\,.
\end{equation}
\end{prop}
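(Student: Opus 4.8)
The plan is to compute directly from the defining formula \cref{dnucandef}, working over a contractible open subset $U\subset X$ on which a nowhere-vanishing section $\theta\in\cinf(U,K_X)$ exists. First I would recall that for $h\in\Met(K_X^*)$, the induced metric $h^{-1}$ on $K_X$ satisfies the elementary scaling rule: if we rescale $h\mapsto e^fh$ on $K_X^*$, then the dual metric on $K_X$ rescales as $(e^fh)^{-1}=e^{-f}h^{-1}$, since pairing a vector and a covector is bilinear and the dual metric is defined by the condition $\langle\xi,\xi\rangle_{h^{-1}}=|s|_h^{-2}$ for $\xi$ dual to $s$. Consequently $|\theta|_{(e^fh)^{-1}}^2=e^{-f}|\theta|_{h^{-1}}^2$.

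Then I would plug this into \cref{dnucandef}: by definition,
\begin{equation}
d\nu_{e^fh}=\sqrt{-1}^{\,n^2}\,\frac{\theta\wedge\bar\theta}{\;|\theta|_{(e^fh)^{-1}}^2\;}
=\sqrt{-1}^{\,n^2}\,\frac{\theta\wedge\bar\theta}{\;e^{-f}|\theta|_{h^{-1}}^2\;}
=e^{f}\,\sqrt{-1}^{\,n^2}\,\frac{\theta\wedge\bar\theta}{\;|\theta|_{h^{-1}}^2\;}=e^{f}\,d\nu_h\,,
\end{equation}
which is the claimed identity on $U$. Since $X$ is covered by such contractible opens and $d\nu_{e^fh}$, $d\nu_h$ are globally well-defined volume forms (independent of the choice of local $\theta$, which is exactly what makes \cref{dnucandef} a valid definition), the local identities patch to the global statement.

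There is essentially no obstacle here; the only point requiring a moment's care is the sign convention in the duality $h\leftrightarrow h^{-1}$ between $K_X^*$ and $K_X$ — namely that scaling up the metric on the line bundle scales \emph{down} the metric on its dual — so that the exponential appears with the correct sign $e^{+f}$ rather than $e^{-f}$. Once that bookkeeping is fixed, the computation is immediate, and I would present it in the two or three lines above.
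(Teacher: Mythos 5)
Your argument is correct and is essentially identical to the paper's proof: both rest on the single observation that $(e^fh)^{-1}=e^{-f}h^{-1}$, which when substituted into the defining formula \cref{dnucandef} yields the factor $e^{f}$. The paper states this in one line; your version merely spells out the sign bookkeeping and the local-to-global patching, which are fine but not different in substance.
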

\begin{proof}
If $h^{-1}\in\Met(K_X)$ denotes the Hermitian metric
induced by $h\in\Met(K_X^*)$, then for any $f\in\cinf(X,\R)$ and $h\in\Met(K_X^*)$, we have $(e^fh)^{-1}=e^{-f}h^{-1}$.
This readily implies the result by formula \cref{dnucandef}.
\end{proof}

\begin{rmk}
Let $L:=K_X^*$ be ample, and recall from \cref{momentsec}
that we write $h_\s\in\Met^+(L)$ for the positive Hermitian
metric induced
by the Fubini-Study metric of $H_\s$,
for any $\s=\{s_j\}_{j=1}^{n_p}\in\BB(H^0(X,L^p))$.
Restricted to such metrics, the anticanonical volume form
\cref{dnucandef} admits a metric-independent characterization.
In fact, using \cref{sumprop}, one computes
\begin{equation}
d\nu_{h_{\textbf{s}}}=\left(\sum_{j=1}^{n_p}\,
s_j\otimes\bar{s}_j\right)^{-1/p}\,,
\end{equation}
where the expression inside the parentheses in the last line is
to be considered as a positive section of
$L^p\otimes\overline{L}^p$ equipped
with its natural $\R_+$-structure,
so that its inverse $p$-th root defines a smooth form.
These volume forms have been introduced by Donaldson in
\cite[\S\,2.2.2]{Don09} to approximate numerically
Kähler-Einstein metrics on Fano manifolds, a program
for which
\cref{mainth,gapth} provide a rigorous basis.
\end{rmk}

\subsection{Approximately balanced metrics}
\label{approxbalsec}

Let $X$ be a Fano manifold
with $\Aut(X)$ discrete
and admitting a Kähler-Einstein metric
$h_\infty\in\Met^+(K_X^*)$.
In this Section, we consider the setting of \cref{setting}
with $L:=K_X^*$
for the anticanonical volume map \cref{dnucandef}.

The proof of the following result
is parallel to the proof of the analogous result of Donaldson
\cite[Th.\,26]{Don01} in the case of \cref{Liouvilleex},
replacing the positivity of the Lichnerowicz operator
by \cref{lambda1>4pi}.
All the local $\CC^m$-norms are taken with respect to the fixed Kähler-Einstein metric.

\begin{prop}\label{approxbal}
There exists a sequence of functions
$f_r\in\cinf(X,\R)$, $r\in\N$, such that
for every $k,\,m\in\N$, there exists a constant $C_{k,m}>0$
such that all $p\in\N$ big enough,
the positive Hermitian metric
\begin{equation}\label{approxbalmet}
h_k(p):=\exp\left(\sum_{r=1}^{k-1}\frac{1}{p^r}f_r\right)
h_\infty\in\Met^+(L^p)\,,
\end{equation}
have associated Rawnsley function $\rho_{h_p^k(p)}\in\cinf(X,\R)$
satisfying
\begin{equation}\label{approxbalfla}
\left|\,\rho_{h_k^p(p)}-\frac{n_p}{\Vol(d\nu_{h_k(p)})}\,
\right|_{\CC^m}\leq C_{k,m} p^{n-k}\,.
\end{equation}
\end{prop}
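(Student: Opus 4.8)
The plan is to construct the functions $f_r$ recursively by a formal power series argument, so that at each order in $1/p$ the Rawnsley function of the metric $h_k(p) = \exp(\sum_{r=1}^{k-1} p^{-r} f_r) h_\infty$ is forced to be constant up to the desired error. First I would fix notation: write $h_\infty$ for the Kähler-Einstein metric and, for a function $\phi\in\cinf(X,\R)$, use \cref{dnuef/dnu} to note $d\nu_{e^\phi h_\infty} = e^\phi\,d\nu_{h_\infty}$, and use the scaling behaviour of the Rawnsley function under multiplication of the metric by a positive function, which follows from \cref{Rawnprop}. Combining this with the Bergman kernel expansion \cref{Bergdiagexpfla} of \cref{Bergdiagexp}, applied to the metric $h_k(p)$ on $L^p$ (here one must track that $h_k(p)$ depends on $p$, but only through the fixed polynomial in $1/p$, so the expansion still applies with uniform constants by the last sentence of \cref{Bergdiagexp}), one gets an asymptotic expansion
\begin{equation}
\rho_{h_k(p)} = p^n\Big(b_0(h_\infty) + \tfrac{1}{p}\big(b_1(h_\infty) + \mathcal{L}_1 f_1\big) + \cdots\Big) + O(p^{n-k})\,,
\end{equation}
where each coefficient at order $p^{n-r}$ is $b_r(h_\infty)$ plus a term $\mathcal{L}_r f_r$ linear in the top unknown $f_r$ (with $f_1,\dots,f_{r-1}$ already determined entering the lower-order data), and $\mathcal{L}_r$ is, modulo lower-order terms, the linearization of $f\mapsto \rho_{e^f h_\infty}$ at $f=0$. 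By \cref{Bergcoeff} we have $b_0(h_\infty)\,d\nu_{h_\infty} = \om_{h_\infty}^n/n!$, and since $h_\infty$ is Kähler-Einstein this forces $b_0(h_\infty)$ to be the constant $c$ of \cref{KEdef}; so the leading term is already constant.

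Next I would identify the linearized operator. Differentiating \cref{Rawndeffla} or equivalently using \cref{Tpf} and the Berezin transform, the derivative of $f\mapsto \rho_{e^f h_\infty}/(\text{normalizing constant})$ at $f=0$ is, to leading order in $p$, a positive multiple of $\mathrm{Id} - \tfrac{1}{4\pi p}\Delta_{h_\infty} + O(p^{-2})$ acting on $f$, by \cref{Bpgap} applied to the quantum channel (more precisely: the Berezin transform has the same semiclassical spectral gap, and the relevant combination of quantization and dequantization produces $\Delta_{h_\infty}$ at leading order). The key point is that the part of $\mathcal{L}_r$ that must be inverted is $\mathrm{Id} - \tfrac{1}{4\pi p}\Delta_{h_\infty}$ restricted to the orthogonal complement of the constants: since we may always adjust $f_r$ by an additive constant without changing $d\nu$ beyond an overall scaling, it suffices to solve the equations on $\{f : \int_X f\, d\nu_{h_\infty} = 0\}$, and there $\Delta_{h_\infty}$ is invertible, so $\mathrm{Id} - \tfrac{1}{4\pi p}\Delta_{h_\infty}$ is invertible with bounded inverse for $p$ large precisely because $\lambda_1(h_\infty) > 4\pi$ by \cref{lambda1>4pi} — wait, for invertibility one only needs $\lambda_1>0$; the strict bound $\lambda_1>4\pi$ is what will be needed later for the flow to converge, and here it guarantees the inverse is uniformly bounded. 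So at each step $r = 1, \dots, k-1$ I solve the elliptic equation $\mathcal{L}_r f_r = -(\text{order-}r\text{ Bergman coefficient in terms of }f_1,\dots,f_{r-1})$, projected to mean zero, obtaining $f_r \in \cinf(X,\R)$ by standard elliptic regularity.

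Finally I would assemble the estimate. Having chosen $f_1,\dots,f_{k-1}$ this way, by construction the asymptotic expansion of $\rho_{h_k(p)}$ has all coefficients from order $p^n$ down to order $p^{n-k+1}$ equal to constants; grouping these constants, they sum to $n_p/\Vol(d\nu_{h_k(p)})$ up to order $p^{n-k}$, because integrating $\rho_{h_k(p)}$ against $d\nu_{h_k(p)}$ gives exactly $n_p$ by \cref{Rawnprop} and \cref{sumprop}, so a constant function with the same integral up to that order must agree with $n_p/\Vol(d\nu_{h_k(p)})$ up to $O(p^{n-k})$. The $\CC^m$ bound \cref{approxbalfla} then follows from the $\CC^m$ version of \cref{Bergdiagexpfla}, with the constant $C_{k,m}$ uniform because the metrics $h_k(p)$ stay in a fixed bounded set in every $\CC^l$-norm (they converge to $h_\infty$) and the anticanonical volume map is bounded below there. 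The main obstacle is the bookkeeping of the linearization: one must verify that the operator appearing at each order $r$ really is, to leading order in $p$, the invertible operator $\mathrm{Id} - \tfrac{1}{4\pi p}\Delta_{h_\infty}$ on mean-zero functions, which requires carefully combining the off-diagonal Bergman expansion of \cref{DLM06} with the semiclassical expansion of the Berezin transform from \cref{Bpgap}; everything else is formal power series manipulation and elliptic theory.
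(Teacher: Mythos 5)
Your overall scheme --- recursive construction of the $f_r$, constancy of the leading coefficient $b_0(h_\infty)$ from the K\"ahler--Einstein condition, a linear elliptic equation at each order, and uniformity of the constants from the last clause of \cref{Bergdiagexp} --- is the same as the paper's. But you have misidentified the operator that must be inverted at each step, and this is not a bookkeeping issue: it is the crux of the proposition. Perturbing $h_\infty\mapsto e^{tf}h_\infty$ changes $\om_{h_\infty}$ by a $\dbar\partial f$ term and changes $d\nu_{h_\infty}$ by the factor $e^{tf}$ (\cref{dnuef/dnu}), so by \cref{Bergcoeff} the variation of $b_0(h)=\om_h^n/(n!\,d\nu_h)$ is exactly \cref{varinftyfla}:
\begin{equation*}
\dt\Big|_{t=0}\,b_0(e^{tf}h_\infty)=\Big(\tfrac{1}{4\pi}\Delta_{h_\infty}f-f\Big)\,b_0(h_\infty)\,.
\end{equation*}
The operator to invert is therefore $\tfrac{1}{4\pi}\Delta_{h_\infty}-\Id$, with \emph{no} factor $p^{-1}$ in front of the Laplacian: the zeroth-order term $-\Id$ comes from the variation of the anticanonical volume form and has the same size as the Laplacian term. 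Your operator $\Id-\tfrac{1}{4\pi p}\Delta_{h_\infty}$ is the Berezin transform of \cref{Bpgap}; it is a near-identity operator, it requires none of the spectral theory you invoke to analyze, and it plays no role in this proposition (it enters only later, in \cref{dmu}). Moreover, no off-diagonal Bergman asymptotics are needed to identify the linearization: one only needs \cref{varinftyfla} together with the observation that a correction $e^{f_r/p^r}$ perturbs the higher coefficients $b_j$, $j\geq1$, only at orders beyond $p^{-r}$, so at each step only the variation of $b_0$ matters.

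The consequence of this misidentification is that your parenthetical claim --- that invertibility only needs $\lambda_1>0$ and that $\lambda_1>4\pi$ is reserved ``for later'' --- is wrong. The operator $\tfrac{1}{4\pi}\Delta_{h_\infty}-\Id$ restricted to mean-zero functions has eigenvalues $\lambda_j/4\pi-1$, so it is invertible there if and only if $4\pi$ is not an eigenvalue of $\Delta_{h_\infty}$; this is precisely what \cref{lambda1>4pi} (Lichnerowicz--Matsushima, under the discrete automorphism hypothesis) supplies, and it is used in this very proposition. This is also exactly the point that distinguishes the Fano case from \cref{cstantex}: for a constant volume form the $-\Id$ term is absent, the operator is $\tfrac{1}{4\pi}\Delta$, and indeed $\lambda_1>0$ suffices --- which is why no hypothesis on $\Aut(X)$ is needed there. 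As written, your argument would ``prove'' the proposition without ever using the discreteness of the automorphism group, which cannot be correct.
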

\begin{proof}
First note by \cref{KEdef} of
the Kähler-Einstein metric
$h_\infty\in\Met^+(L)$
that the coefficient $b_0(h_\infty)\in\cinf(X,\R)$ of
\cref{Bergdiagexp} is constant.
This implies the result
for $k=1$.

Let us write $\Delta_{h_\infty}$ for the
Riemannian Laplacian of $(X,g^{TX}_{h^\infty})$
acting on $\cinf(X,\C)$.
Using \cref{dnuef/dnu} and
a classical formula in Kähler geometry,
for any $f\in\cinf(X,\R)$ we get
\begin{equation}\label{varinftyfla}
\dt\Big|_{t=0}\frac{\om_{e^{tf}h_\infty}^n}{\,d\nu_{e^{tf}h_\infty}}
=\left(\frac{1}{4\pi}\Delta_{h_\infty} f-f\right)\frac{\om^n_{h_\infty}}
{d\nu_{h_\infty}}\,.
\end{equation}
Recall by \cref{KEdef} that the Riemannian volume
form of $(X,g_{h_\infty}^{TX})$ is a constant multiple
of $d\nu_{h_\infty}$. Then \cref{lambda1>4pi} shows that
the restriction of the operator
$\left(\frac{1}{4\pi}\Delta_{h_\infty}-1\right)$
admits an inverse on
the orthogonal of the constant functions
inside $L^2(X,\C)$, so that
for any function $f\in\cinf(X,\R)$, there exists a
function $\til{f}\in\cinf(X,\R)$ satisfying
\begin{equation}\label{f1fla}
f-\int_X\,f~
\frac{d\nu_{h_\infty}}{\Vol(d\nu_{h_\infty})}
=\til{f}-\frac{1}{4\pi}\Delta_{h_\infty}\til{f}
\,.
\end{equation}
\todo{pas besoin de la constante:
on a $\Ker\left(\frac{1}{4\pi}\Delta_{h_\infty}-1\right)=\0$}
Take $f:=b_1(h_\infty)\in\cinf(X,\R)$ in \cref{f1fla},
and consider the Rawnsley
function $\rho_{h^p_1(p)}\in\cinf(X,\R)$ associated
with the metric $h_1(p):=e^{\til{f}/p}h_\infty\in\Met^+(L)$.
As $h_1(p)\to h_\infty$ smoothly as $p\to+\infty$
by construction, we can use the uniformity in \cref{Bergdiagexp}
to replace
$h_\infty$ by $h_1(p)$ in
the expansion \cref{Bergdiagexpfla}. 
As the
coefficients in the expansion are polynomials in the
derivatives of $h_1(p)\in\Met^+(L)$, we can take
the Taylor expansion as $p\to+\infty$ of formula 
\cref{Bergcoeff} to get from
formulas \cref{varinftyfla,f1fla} the following
expansion as $p\to+\infty$ in $\CC^m$-norm for all $m\in\N$,
\begin{equation}
\begin{split}
b_0(h_1(p))&+p^{-1}b_1(h_1(p))\\
&=b_0(h_\infty)+p^{-1}\left(\frac{1}{4\pi}\Delta_{h_\infty} \til{f}-\til{f}\right)+p^{-1}b_1(h_\infty)+O(p^{-2})\\
&=b_0(h_\infty)+p^{-1}\int_X\,b_1(h_\infty)~
\frac{d\nu_{h_\infty}}{\Vol(d\nu_{h_\infty})}+O(p^{-2})\,.
\end{split}
\end{equation}
As $b_0(h_\infty)$ is constant by assumption,
this implies that there exists a constant
$C_p>0$ for all
$p\in\N$ such that as $p\to+\infty$ in $\CC^m$-norm
for any $m\in\N$, we have
\begin{equation}\label{approxbalfla1}
\rho_{h_1^p(p)}=C_p+O(p^{n-2})\,,
\end{equation}
and the constant $C_p>0$ is determined up to order $O(p^{-2})$
by taking the integral of both sides against $d\nu_{h_1(p)}$
and using formula \cref{npexp}.
This implies the result for $k=2$.

Let us assume now that the result holds for some $k\in\N$,
so that we have Hermitian metrics
$h_k(p)\in\Met^+(L)$ as in \cref{approxbalmet}
with associated Rawnsley function satisfying the asymptotic
expansion \cref{approxbalfla} as $p\to+\infty$.
As $h_k(p)\to h_\infty$ smoothly as $p\to+\infty$
by construction, we can again
apply \cref{Bergdiagexp} to $\rho_{h_k^p(p)}$,
and taking the Taylor expansion as $p\to+\infty$
of the coefficients $b_r(h_k(p))$ for all $1\leq r\leq k+1$,
we get a sequence of functions $b_r'\in\cinf(X,\R)$
for $1\leq r\leq k$, not depending on $p\in\N$, such that
the asymptotic expansion \cref{Bergdiagexpfla} holds
for these functions. Futhermore, for every $r\leq k-1$,
the function $b_r'$ is constant over $X$ by assumption.
We then take 
\begin{equation}
h_{k+1}(p):=e^{f_k/p^k}h_k(p)\in\Met^+(L^p)
\end{equation}
for all $p\in\N$, where the function $f_k\in\cinf(X,\R)$
is constructed as the function $\til{f}$ of formula \cref{f1fla}
for $f:=b_k'$. One can then repeat the process above
to get the result for $k+1$, which gives the result
for general $k\in\N$ by induction.
\end{proof}

Let us now consider orthonormal bases
$\s_k(p)\in\BB(H^0(X,L^p))$ for the $L^2$-products
$L^2(h_k^p(p))\in\Prod(H^0(X,L^p))$
induced by the Hermitian metrics of \cref{approxbal},
for all $k\in\N$ and
all $p\in\N$ big enough. 
Then under the identification \cref{isomHerm}
and by \cref{sumprop,FSvar}, for any $B\in\Herm(\C^{n_p})$
we have
\begin{equation}\label{heBsp=hkp}
h_{e^B\s_k(p)}^p=\sigma_{\s_k(p)}\left(e^{2B}\right)^{-1}
\rho_{h_k^p(p)}^{-1}\,h_k^p(p)\,.
\end{equation}
The following Lemma is essentially due to Donaldson
\cite[Prop.\,27\,(1)]{Don01},
and we prove it here under our conventions
for convenience.
\begin{lem}\label{approxomkplem}
For any $k,\,k_0,\,m\in\N$ with $k_0>n+1+m/2$,
there exists $C>0$
such that for all $p\in\N$ big enough, we have
\begin{equation}
\label{approxhBp}
\left|\om_{e^{B}\s_k(p)}-\om_\infty\right|_{\CC^{m}}
\leq\frac{C}{p}\,,
\end{equation}
where $\om_{e^B\s_k(p)}$ is the Kähler form induced by
$h_{e^B\s_k(p)}\in\Met^+(L)$ for all $p\in\N$ and $\om_\infty$ is the
Kähler form induced by the Kähler-Einstein metric
$h_\infty\in\Met^+(L)$.
\end{lem}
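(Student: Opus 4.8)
The plan is to show that the metrics $h_{e^B\s_k(p)}\in\Met^+(L)$ converge to $h_\infty$ in a strong enough $\CC^m$-sense, and then pass to the associated Kähler forms via the continuity of $h\mapsto\om_h=\frac{\sqrt{-1}}{2\pi}R_h$. The key point is formula \cref{heBsp=hkp}, which expresses $h^p_{e^B\s_k(p)}$ as the product of $h^p_k(p)$ with the two scalar functions $\sigma_{\s_k(p)}(e^{2B})^{-1}$ and $\rho_{h_k^p(p)}^{-1}$. Since $h_k(p)\to h_\infty$ smoothly with $\frac1p\om_{h^p_k(p)}=\om_{h_k(p)}\to\om_\infty$ at rate $O(1/p)$ by construction in \cref{approxbal}, and since $\rho_{h_k^p(p)}$ is close to the constant $n_p/\Vol(d\nu_{h_k(p)})=p^n\Vol(X,L)(1+O(1/p))$ by \cref{approxbalfla}, the only term that is not automatically under control is $\sigma_{\s_k(p)}(e^{2B})$, which depends on the a priori arbitrary $B\in\Herm(\C^{n_p})$.

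First I would take $-\frac1p\log$ of \cref{heBsp=hkp} to write, as Hermitian metrics on $L$,
\[
h_{e^B\s_k(p)}=\exp\!\left(-\tfrac1p\log\sigma_{\s_k(p)}(e^{2B})-\tfrac1p\log\rho_{h_k^p(p)}\right)h_k(p),
\]
so that
\[
\om_{e^B\s_k(p)}-\om_\infty=\bigl(\om_{h_k(p)}-\om_\infty\bigr)+\frac{\sqrt{-1}}{4\pi p}\,\partial\dbar\log\sigma_{\s_k(p)}(e^{2B})+\frac{\sqrt{-1}}{4\pi p}\,\partial\dbar\log\rho_{h_k^p(p)}.
\]
The first term is $O(1/p)$ in every $\CC^m$-norm by \cref{approxbal}; the third term is $O(1/p)$ in $\CC^m$ because $\rho_{h_k^p(p)}=C_p(1+O(p^{-1}))$ in $\CC^{m+2}$ with $C_p>0$ constant, so $\log\rho_{h_k^p(p)}=\log C_p+O(1/p)$ in $\CC^{m+2}$ and one more application of $\partial\dbar$ loses two derivatives. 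So everything reduces to bounding $\frac1p\|\partial\dbar\log\sigma_{\s_k(p)}(e^{2B})\|_{\CC^m}$ uniformly in $B$.

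The main obstacle is therefore the estimate on $\sigma_{\s_k(p)}(e^{2B})$, and this is exactly where the hypothesis $k_0>n+1+m/2$ and the choice of a high-order approximation $\s_k(p)$ must enter — following Donaldson \cite[Prop.\,27]{Don01}. The idea is that $\sigma_{\s_k(p)}(e^{2B})=\sum_j |e^Bs_j|^2_{h^p_k(p)}$ is a ratio of Bergman-type functions, and using the positivity $e^{2B}>0$ together with an interpolation/Sobolev argument one controls pointwise $\CC^m$-norms of $\log$ of such a function by its $L^2$-size relative to the Bergman function, which in turn is controlled by the balanced-ness defect $|\mu_\nu(\s_k(p))|$. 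Concretely, I would bound $\|B\|_{\mathrm{op}}$ or the relevant Sobolev norm of $B$ in terms of the trace norm of $\mu_\nu(\s_k(p))$, show that $\|\mu_\nu(\s_k(p))\|$ is small by polynomial order in $p$ thanks to \cref{approxbalfla} (so that the relevant estimate dominates the $p^{n}$ growth once $k_0$ is large enough), and then convert this into a $\CC^m$-bound on the conformal factor using the asymptotic $C^m$-estimates on the Bergman kernel of \cref{Bergdiagexp} and the Sobolev embedding, which is where $2k_0>2n+2+m$ is needed. Since the detailed calculation is routine once the strategy is fixed — and is essentially identical to Donaldson's — I would only record the resulting chain of inequalities and conclude $\|\om_{e^B\s_k(p)}-\om_\infty\|_{\CC^m}\leq C/p$.
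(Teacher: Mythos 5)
Your decomposition of $\om_{e^B\s_k(p)}-\om_\infty$ into the three terms coming from \cref{heBsp=hkp} is exactly the paper's, and your treatment of the terms $\om_{h_k(p)}-\om_\infty$ and $\tfrac{1}{p}\dbar\partial\log\rho_{h_k^p(p)}$ is correct. The gap is in the remaining term. You treat $B$ as ``a priori arbitrary'' and propose to control it by bounding $\|B\|$ in terms of $\|\mu_\nu(\s_k(p))\|_{tr}$. This cannot work: $\mu_\nu(\s_k(p))$ does not depend on $B$ at all, and for arbitrary $B$ the conclusion \cref{approxhBp} is simply false (take $B=\lambda\,\diag(1,-1,0,\dots,0)$ with $\lambda$ large). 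The statement is implicitly quantified over $B$ with $\|B\|_{tr}\leq C^{-1}p^{-k_0}$ --- this is the only role of $k_0$ --- and that is how the lemma is invoked at the end of the proof of \cref{mainth}, where the smallness of the balanced $B_p$ is supplied separately by \cref{Donlem}, not by this lemma. Importing the moment-map control of $B$ into the present lemma would also make the logic circular, since \cref{dmu} and \cref{Donlem} come afterwards and the moment-map defect of $\s_k(p)$ carries no information about $B$.

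Once the hypothesis $\|B\|_{tr}\leq C^{-1}p^{-k_0}$ is in place, the estimate you defer as ``routine'' is short but is the actual content of the proof: writing $\sigma_{\s_k(p)}(A)=\rho_{h_k^p(p)}^{-1}\sum_{j,l}A_{jl}\<s_l,s_j\>_{h_k^p(p)}$ and combining the Sobolev bound $|s|_{\CC^{m+2}(h^p)}\leq Cp^{(n+m+2)/2}\|s\|_{L^2(h^p)}$ with Cauchy--Schwarz and \cref{approxbalflainv} gives $|\sigma_{\s_k(p)}(A)|_{\CC^{m+2}}\leq Cp^{n+1+m/2}\|A\|_{tr}$; applied to $A=e^{2B}-\Id$ this yields $|\sigma_{\s_k(p)}(e^{2B})-1|_{\CC^{m+2}}\leq Cp^{n+1+m/2-k_0}=o(1)$, whence $\tfrac{1}{p}\left|\dbar\partial\log\sigma_{\s_k(p)}(e^{2B})\right|_{\CC^{m}}\leq C/p$. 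This is where $k_0>n+1+m/2$ enters, not through any interpolation against the balancing defect. (Also, a minor point: in the paper's normalization \cref{preq} the conformal-factor term carries $\tfrac{\sqrt{-1}}{2\pi p}$, not $\tfrac{\sqrt{-1}}{4\pi p}$.)
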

\begin{proof}

Fix $k\in\N$, and note from \cref{dnuef/dnu} that
$\Vol(d\nu_{h_k(p)})\to\Vol(d\nu_{h_\infty})$
as $p\to+\infty$. Using \cref{approxbal} and the estimate
\cref{npexp} for the dimension,
we know that there is a constant $C>0$
such that for all $p\in\n$, we have
\begin{multline}
\left|\,\rho_{h_k^p(p)}^{-1}-\frac{\Vol(d\nu_{h_k(p)})}{n_p}\,
\right|_{\CC^0}\\=\rho_{h_k^p(p)}^{-1}
\frac{\Vol(d\nu_{h_k(p)})}{n_p}
\left|\,\rho_{h_k^p(p)}-\frac{n_p}{\Vol(d\nu_{h_k(p)})}\,
\right|_{\CC^0}
\leq C p^{-k-n}\,,
\end{multline}
so that by induction on the number $m\in\N$
of successive derivatives of $\rho_{h_k^p(p)}^{-1}$
and using \cref{approxbal} up to $m\in\N$, we get
constants $C_m>0$ such that for all $p\in\N$,
\begin{equation}\label{approxbalflainv}
\left|\,\rho_{h_k^p(p)}^{-1}-\frac{\Vol(d\nu_{h_k(p)})}{n_p}\,
\right|_{\CC^m}\leq C_m p^{-k-n}\,.
\end{equation}

On the other hand,
using the Sobolev embedding
theorem as in \cite[Lemma\,2]{MM15}, we get for any $m\in\N$
and $h\in\Met^+(L^p)$ a constant
$C_m>0$ such that for all $p\in\N$ and any holomorphic section
$s\in H^0(X,L^p)$, we have
\begin{equation}\label{Sobunif}
|s|_{\CC^m(h^p)}\leq C_m\,p^{\frac{n+m}{2}}\|s\|_{L^2(h^p)}\,,
\end{equation}
where $|\cdot|_{\CC^m(h^p)}$ denotes the $\CC^m$-norm
with respect to the Chern connection of $(L^p,h^p)$.
Using formula \cref{approxbalmet}, this
inequality readily extends to the approximately
balanced metrics $h_k^p(p)\in\Met^+(L^p)$.

Writing now $\s_k(p)=\{s_j\}_{j=1}^{n_p}$, 
\cref{sumprop,FSvar} show that for all
$A=(A_{jk})_{j,\,k=1}^{n_p}\in\Herm(\C^{n_p})$, we have
\begin{equation}\label{Rsig-R}
\sigma_{\s_k(p)}(A)=\rho_{h_k^p(p)}^{-1}
\sum_{j,\,k=1}^{n_p}A_{jk}\<s_k,s_j\>_{h_k^p(p)}\,.
\end{equation}
Then using the estimates \cref{npexp,Sobunif,approxbalflainv}
together with Cauchy-Schwarz inequality on the trace norm,
we get for all $m\in\N$
constants $C,\,C',\,C''>0$
such that
for all $A\in\Herm(\C^{n_p})$ and all $p\in\N$,
we have
\begin{equation}\label{sigmaG}
\begin{split}
&|\sigma_{\s_k(p)}(A)|_{\CC^m}
\leq\left|\rho_{h_k^p(p)}^{-1}\right|_{\CC^m}
\sum_{j,\,k=1}^{n_p}\left|A_{jk}\<s_k,s_j\>_{h_k(p)}
\right|_{\CC^m}\\
&\leq\left(\frac{\Vol(d\nu_{h_k(p)})}{n_p}
+Cp^{-n-k}\right)C'p^{n+\frac{m}{2}}
\|A\|_{tr}\,n_p\\
&\leq C'' p^{n+\frac{m}{2}}\|A\|_{tr}\,.
\end{split}
\end{equation}
This implies in particular that for all
$B\in\Herm(\C^{n_p})$ with $\|B\|_{tr}\leq C^{-1}p^{-k_0}$,
we have
\begin{equation}\label{sigmae2B-1}
\left|\sigma_{\s_k(p)}(e^{2B})-1\right|_{\CC^m}=\left|
\sigma_{\s_k(p)}(e^{2B}-\Id)\right|_{\CC^m}
\leq Cp^{n+\frac{m}{2}-k_0}\,.
\end{equation}
Now by formula \cref{heBsp=hkp} and classical
properties of the Kähler form
\cref{preq}, we have
\begin{equation}\label{ddbar}
\begin{split}
\om_{e^B\s_k(p)}=\om_{h_k(p)}
&-\frac{\sqrt{-1}}{2\pi p}\dbar\partial\log\sigma_{\s_k(p)}\left(e^{2B}\right)
-\frac{\sqrt{-1}}{2\pi p}\dbar\partial\log\rho_{h_k^p(p)}\\
=\om_{h_k(p)}
&-\frac{\sqrt{-1}}{2\pi p}\dbar\partial\log
\left(1+\sigma_{\s_k(p)}\left(e^{2B}-\Id\right)\right)\\
&-\frac{\sqrt{-1}}{2\pi p}\dbar\partial
\log\left(1+\left(\frac{\Vol(d\nu_{h_k(p)})}{n_p}\rho_{h_k^p(p)}
-1\right)\right)\,,
\end{split}
\end{equation}
which by \cref{approxbal} and formula \cref{sigmae2B-1}
implies that for any $k,\,k_0,\,m\in\N$
with $k_0>n+m/2$, there exists $C>0$ such
for all
$B\in\Herm(\C^{n_p})$ with $\|B\|_{tr}\leq C^{-1}p^{-k_0}$,
we have
\begin{equation}
\left|\om_{e^{B}\s_k(p)}-\om_{h_k(p)}\right|_{\CC^{m-2}}
\leq\frac{C}{p}\,.
\end{equation}
By formula \cref{approxbalmet} for $h_k(p)$ and the
corresponding formula for $\om_{h_k(p)}$ as in \cref{ddbar},
this implies the result.
\end{proof}

In the case $m=0$, the estimate \cref{sigmae2B-1} admits an
elementary improvement. In fact,
\cref{cohstateprojdef} together with Cauchy-Schwarz inequality
and the fact that $\|\Pi_\s\|_{tr}=1$ implies that
for any $\epsilon>0$ small enough,
there is $C>0$ such that for any $B\in\Herm(\C^{n_p})$
with $\|B\|_{tr}\leq\epsilon$
and any $\s\in\BB(H^0(X,L^p))$, we have
\begin{equation}\label{sigmae2B-12}
|\sigma_\s(e^{2B})-1|_{\CC^0}\leq C\|B\|_{tr}\,.
\end{equation}
This inequality will be used repeatedly in all the sequel.

One of the technical differences of our situation
compared to the classical situation of \cref{Liouvilleex}
is the fact that the volumes of the anticanonical volume
map depend on the positive Hermitian metric. To control
these volumes, we will use the following Lemma,
where for any $\s\in\BB(H^0(X,L^p))$,
we write $d\nu_\s$ for the anticanonical volume
form \cref{dnucandef} associated with $h_\s\in\Met^+(L)$.

\begin{lem}\label{vollem}
For any $k_0,\,k\in\N$ with $k\geq k_0$,
there exists a constant $C>0$ such that
for all $p\in\N$ and any
$B\in\Herm(\C^{n_p})$ with $\|B\|_{tr}\leq C^{-1}p^{-k_0}$,
we have
\begin{equation}\label{volborne}
\left|\frac{d\nu_{e^{B}\s_k(p)}}{d\nu_{h_k(p)}}-
\frac{\Vol(d\nu_{e^{B}\s_k(p)})}{\Vol(d\nu_{h_k(p)})}\,\right|_{\CC^0}
\leq Cp^{-k_0-1}\,,
\end{equation}
and $C^{-1}<\Vol(d\nu_{e^{B}\s_k(p)})<C$.
\end{lem}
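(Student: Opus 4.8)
The plan is to reduce everything to the explicit formula \cref{heBsp=hkp}. Taking $p$-th roots on $L$ and applying \cref{dnuef/dnu}, it gives
\begin{equation}
d\nu_{e^B\s_k(p)}=\Big(\sigma_{\s_k(p)}(e^{2B})\,\rho_{h_k^p(p)}\Big)^{-1/p}\,d\nu_{h_k(p)}\,,
\end{equation}
so the ratio $d\nu_{e^B\s_k(p)}/d\nu_{h_k(p)}$ is the $(-1/p)$-th power of the function $\sigma_{\s_k(p)}(e^{2B})\,\rho_{h_k^p(p)}\in\cinf(X,\R)$. The point is that this function is multiplicatively close, with error $O(p^{-k_0})$ in $\CC^0$-norm, to the \emph{constant} $n_p/\Vol(d\nu_{h_k(p)})$: the $\CC^0$-estimate \cref{sigmae2B-12} gives $|\sigma_{\s_k(p)}(e^{2B})-1|_{\CC^0}\leq C\,\|B\|_{tr}\leq Cp^{-k_0}$ once $\|B\|_{tr}\leq C^{-1}p^{-k_0}$ (with $C$ large enough, for $p$ big enough, that \cref{sigmae2B-12} applies), while \cref{approxbal} together with the dimension estimate \cref{npexp} gives, exactly as in the derivation of \cref{approxbalflainv}, that $\big|\Vol(d\nu_{h_k(p)})\,\rho_{h_k^p(p)}/n_p-1\big|_{\CC^0}\leq Cp^{-k}\leq Cp^{-k_0}$ since $k\geq k_0$. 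Multiplying, $\sigma_{\s_k(p)}(e^{2B})\,\rho_{h_k^p(p)}=\frac{n_p}{\Vol(d\nu_{h_k(p)})}\,(1+w_p)$ with $\|w_p\|_{\CC^0}\leq Cp^{-k_0}$.

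Next I would take the $(-1/p)$-th power. Setting $a_p:=\big(n_p/\Vol(d\nu_{h_k(p)})\big)^{-1/p}>0$, the elementary bounds $|\log(1+w)|\leq 2|w|$ and $|e^t-1|\leq 2|t|$ for $|w|,|t|\leq\frac12$ yield, for $p$ big enough, $\big|(1+w_p)^{-1/p}-1\big|_{\CC^0}\leq \frac{C}{p}\|w_p\|_{\CC^0}\leq Cp^{-k_0-1}$, hence
\begin{equation}
\Big|\,\frac{d\nu_{e^B\s_k(p)}}{d\nu_{h_k(p)}}-a_p\,\Big|_{\CC^0}\leq C\,a_p\,p^{-k_0-1}\,.
\end{equation}
Moreover $a_p$ is bounded: by \cref{npexp} and the fact (already used in the proof of \cref{approxomkplem}, via \cref{dnuef/dnu}) that $\Vol(d\nu_{h_k(p)})\to\Vol(d\nu_{h_\infty})$, one has $n_p/\Vol(d\nu_{h_k(p)})\in[c_1p^n,c_2p^n]$ for $p$ big, so $|\log a_p|\leq(n\log p+C)/p\to0$ and in fact $a_p\to1$.

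Finally, integrating the last displayed inequality against the probability measure $d\nu_{h_k(p)}/\Vol(d\nu_{h_k(p)})$ gives $\big|\Vol(d\nu_{e^B\s_k(p)})/\Vol(d\nu_{h_k(p)})-a_p\big|\leq C\,a_p\,p^{-k_0-1}$, and the triangle inequality then produces \cref{volborne}. For the last assertion, write $\Vol(d\nu_{e^B\s_k(p)})=\big(a_p+O(p^{-k_0-1})\big)\Vol(d\nu_{h_k(p)})$; since $a_p\to1$ and $\Vol(d\nu_{h_k(p)})\to\Vol(d\nu_{h_\infty})>0$, this is bounded above and below uniformly in the admissible $B$ for $p$ big, whence $C^{-1}<\Vol(d\nu_{e^B\s_k(p)})<C$ after enlarging $C$. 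I expect the only real care to be bookkeeping: gaining the extra factor $p^{-1}$ solely from the $(-1/p)$-th power — which is why the bound in \cref{volborne} is $p^{-k_0-1}$ and not $p^{-k_0}$ — and ensuring that the comparison quantity $a_p$ is the same for the pointwise ratio and for the ratio of volumes, which is exactly what the averaging step guarantees.
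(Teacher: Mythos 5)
Your argument is correct and is essentially the paper's own proof: both start from the identity $d\nu_{e^B\s_k(p)}/d\nu_{h_k(p)}=\bigl(\sigma_{\s_k(p)}(e^{2B})\,\rho_{h_k^p(p)}\bigr)^{-1/p}$ (the paper phrases it via \cref{logvol}), control the two factors by \cref{sigmae2B-12} and \cref{approxbal}, gain the extra $p^{-1}$ from the $(-1/p)$-th power, and identify the comparison constant (your $a_p$, the paper's $V_p$) with the volume ratio by integrating against $d\nu_{h_k(p)}$. The only difference is that you work multiplicatively where the paper works with logarithms, which is immaterial.
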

\begin{proof}
Fix $k,\,k_0\in\N$ with $k\geq k_0$.
Using \cref{dnuef/dnu}, for any
$p\in\N$ and $B\in\Herm(\C^{n_p})$, formula \cref{heBsp=hkp}
gives
\begin{equation}\label{logvol}
\log\frac{d\nu_{e^B\s_k(p)}}{d\nu_{h_k(p)}}
=-\frac{1}{p}\log\rho_{h_k^p(p)}-\frac{1}{p}
\log\sigma_{\s_k(p)}(e^{2B})\,.
\end{equation}
Then using \cref{approxbal} and formula \cref{sigmae2B-12},
we get a constant $C>0$ such that
for all $p\in\N$ and all $B\in\Herm(\C^{n_p})$ with
$\|B\|_{tr}\leq C^{-1}p^{-k_0}$, we have
\begin{equation}
\begin{split}
&\left|\log\frac{d\nu_{e^B\s_k(p)}}{d\nu_{h_k(p)}}
-\frac{1}{p}\log\frac{\Vol(d\nu_{h_k(p)})}{n_p}\right|_{\CC^0}\\
&=\frac{1}{p}\left|\log\left(1+\left(\frac{\Vol(d\nu_{h_k(p)})}{n_p}\rho_{h_k^p(p)}-1\right)\right)+\log\left(1+\sigma_{\s_k(p)}\left(e^{2B}-\Id\right)\right)
\right|_{\CC^0}\\
&\leq Cp^{-k_0-1}\,.
\end{split}
\end{equation}
Note that we used the asymptotic expansion \cref{npexp}
for the dimension
and that $\Vol(d\nu_{h_k(p)})\to\Vol(d\nu_{h})$
as $p\to+\infty$, which also shows that
$\frac{1}{p}\log\frac{\Vol(d\nu_{h_k(p)})}{n_p}\to 0$
as $p\to+\infty$. In other words, there exist
constants $V_p>0$ satisfying $V_p\to 1$ as $p\to+\infty$
such that for all $B\in\Herm(\C^{n_p})$ with
$\|B\|_{tr}\leq C^{-1}p^{-k_0}$, we have
\begin{equation}
\left|\frac{d\nu_{e^B\s_k(p)}}
{d\nu_{h_k(p)}}-V_p\,\right|_{\CC^0}
\leq Cp^{-k_0-1}\,.
\end{equation}
Taking the integral of both sides against $d\nu_{h_k(p)}$,
we see that there is $C>0$ such that the constants $V_p>0$
for all $p\in\N$ satisfy
\begin{equation}
\left|V_p-\frac{\Vol(d\nu_{e^{B}\s_k(p)})}{\Vol(d\nu_{h_k(p)})}\right|<
Cp^{-k_0-1}\,.
\end{equation}
This gives the result.
\end{proof}

\subsection{Convergence of the balanced metrics}

In this Section, we consider a Fano manifold $X$ endowed with $L:=K_X^*$,
and work in the setting
the anticanonical volume map
$\nu:\Met(K_X^*)\longrightarrow\MM(X)$
defined by formula \cref{dnucandef}.

The goal of this section is to establish \cref{mainth}.
The proof is based on the following fundamental
link between the Berezin-Toeplitz
quantum channel of \cref{quantchandef} associated with an
anticanonically balanced metric
and the derivative of the moment map of \cref{momentdef}
at the corresponding anticanonically balanced product.
For any $\s\in\BB(H^0(X,L^p))$ and $A\in\Herm(\C^{n_p})$, write
\begin{equation}
D_\s\mu_\nu(A):=\dt\Big|_{t=0}\,\mu_\nu(e^{tA}\s)\,.
\end{equation}
To simplify notations,
we will write $d\nu_\s\in\MM(X)$ for the anticanonical volume
form \cref{dnucandef} associated with $h_\s\in\Met^+(L)$.

\begin{prop}\label{dmupre}
Assume that $h^p\in\Met^+(L^p)$ is balanced with respect to the
anticanonical volume form \cref{dnucandef}, and let
$\s_p\in\BB(H^0(X,L^p))$ be orthonormal with respect to $L^2(h^p)$.
Then for all $A\in\Herm(\C^{n_p})$ with $\Tr[A]=0$
and all $\s\in\BB(H^0(X,L^p))$, we have
\begin{equation}\label{TrAdmuA}
\frac{n_p}{2\Vol(d\nu_{\s_p})}\Tr[A\,D_{\s_p}\mu_\nu(A)]=\Tr[A^2]-\left(1+\frac{1}{p}\right)
\Tr[A\EE_{h^p}(A)]\,.
\end{equation}
\todo{Typo corrigée: $\EE_{h^p}(A)\mapsto A\EE_{h^p}(A)$}
\end{prop}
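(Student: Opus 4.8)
The plan is to compute both sides of \cref{TrAdmuA} by direct differentiation and then match them using the Berezin-symbol machinery of \cref{FSvar,FSvar2}, together with the fact that the quantum channel $\EE_{h^p}$ is built from the coherent state projectors $\Pi_p$. First I would fix $\s_p\in\BB(H^0(X,L^p))$ orthonormal for $L^2(h^p)$, so that by \cref{balmet=balemb} and \cref{Rawnprop} the Rawnsley function is the constant $\rho_{h^p}=n_p/\Vol(d\nu_{\s_p})$; this lets me pass freely between $L^2(h^p)$-traces and integrals against $\rho_{h^p}\,d\nu_{\s_p}$. Since the left-hand side is independent of the auxiliary basis $\s$ (it only involves the quantity $\mu_\nu$, which transforms equivariantly, and $\Tr[A\,D_{\s_p}\mu_\nu(A)]$ with $\s_p$ fixed), I would just take $\s=\s_p$.

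The core computation is the $t$-derivative at $t=0$ of
\[
\mu_\nu(e^{tA}\s_p)=\left(\int_X\langle (e^{tA}s)_j(x),(e^{tA}s)_k(x)\rangle_{h^p_{e^{tA}\s_p}}\,d\nu_{h_{e^{tA}\s_p}}(x)\right)_{j,k}-\frac{\Vol(d\nu_{h_{e^{tA}\s_p}})}{n_p}\Id.
\]
There are three sources of $t$-dependence: the sections $e^{tA}s_j$, the Fubini-Study metric $h^p_{e^{tA}\s_p}$, and the anticanonical volume form $d\nu_{h_{e^{tA}\s_p}}$. For the Fubini-Study metric I would use \cref{FSvar}, which gives $h^p_{e^{tA}\s_p}=\sigma_{H_{\s_p}}(e^{2tA})^{-1}h^p_{\s_p}$, and hence by \cref{dnuef/dnu} the volume form is $d\nu_{h_{e^{tA}\s_p}}=\sigma_{\s_p}(e^{2tA})^{-1/p}\,d\nu_{\s_p}$ (using that $h_{e^{tA}\s_p}$ and $h_{\s_p}$ are metrics on $L$, so the exponent picks up a $1/p$). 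Differentiating $\sigma_{\s_p}(e^{2tA})$ at $t=0$ using the formula $\sigma_{\s_p}(A')=\Tr[A'\Pi_{\s_p}]$ from \cref{cohstateprojdef} gives $\frac{d}{dt}\big|_0\sigma_{\s_p}(e^{2tA})=2\,\sigma_{\s_p}(A)=2\,\Tr[A\Pi_p(x)]$. Contracting $\mu_\nu(e^{tA}\s_p)$ with $A$ on both matrix indices (which is what $\Tr[A\,D_{\s_p}\mu_\nu(A)]$ amounts to), the contribution of the sections produces a term $2\int_X\Tr[A\Pi_p(x)]^2\rho_{h^p}^{-1}(x)\cdot\rho_{h^p}\,d\nu_{\s_p}$ after using $\langle s_j(x),s_k(x)\rangle_{h^p}=\rho_{h^p}^{-1}\langle\Pi_p(x)s_j,s_k\rangle_{L^2}$ from \cref{Rawnprop}, i.e. $2\,\Tr[A^2]$ up to the balanced normalization; the Fubini-Study factor contributes $-2\int_X\Tr[A\Pi_p]^2\,d\nu_{\s_p}$; and the volume-form factor contributes a further $-\frac{2}{p}\int_X\Tr[A\Pi_p]^2\,d\nu_{\s_p}$ plus the derivative of the $-\frac{\Vol}{n_p}\Id$ term, whose contraction with the traceless $A$ vanishes.

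The final step is to recognize the integrals $\int_X\Tr[A\Pi_p(x)]^2\,\rho_{h^p}\,d\nu_{\s_p}(x)$ in terms of the quantum channel: by \cref{quatnchantr} in the proof of \cref{quantchanprop}, $\Tr[A\,\EE_{h^p}(A)]=\int_X\Tr[A\Pi_p(x)]^2\rho_{h^p}(x)\,d\nu_{\s_p}(x)$, and since $\rho_{h^p}$ is the constant $n_p/\Vol(d\nu_{\s_p})$ at a balanced metric, $\int_X\Tr[A\Pi_p]^2\,d\nu_{\s_p}=\frac{\Vol(d\nu_{\s_p})}{n_p}\Tr[A\,\EE_{h^p}(A)]$. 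Assembling the three contributions and multiplying through by $\frac{n_p}{2\Vol(d\nu_{\s_p})}$ gives exactly
\[
\frac{n_p}{2\Vol(d\nu_{\s_p})}\Tr[A\,D_{\s_p}\mu_\nu(A)]=\Tr[A^2]-\Big(1+\tfrac{1}{p}\Big)\Tr[A\,\EE_{h^p}(A)],
\]
where the $\Tr[A^2]$ comes from combining $2\,\Tr[A^2]\cdot\frac{n_p}{2\Vol}\cdot\frac{\Vol}{n_p}$ correctly; I would double-check the bookkeeping that the bare section-derivative term is indeed $\Tr[A^2]$ and not something proportional to $\Tr[A\,\EE_{h^p}(A)]$ — this uses that at a balanced point $\sum_j|s_j(x)|^2_{h^p}$ is constant, so the "diagonal" part integrates to the identity and contracting with $A^2$ gives the honest $\Tr[A^2]$. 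The main obstacle I anticipate is precisely this careful separation of the three $t$-derivative contributions and tracking the factors of $\rho_{h^p}$, $\Vol(d\nu_{\s_p})$, and $1/p$ — especially making sure the $1/p$ in the coefficient $(1+1/p)$ arises solely from the anticanonical volume form's dependence on the metric (the exponent $-1/p$ above), which is exactly the structural feature distinguishing the anticanonical case from the Liouville case of \cref{Liouvilleex}.
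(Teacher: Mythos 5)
Your proposal is correct and follows essentially the same route as the paper: differentiate $\mu_\nu(e^{tA}\s_p)$ directly, separate the three contributions (sections, Fubini--Study metric via \cref{FSvar}, and the anticanonical volume form via \cref{dnuef/dnu}, which is the source of the $1/p$), and identify $\int_X\sigma(A^2)\rho\,d\nu=\Tr[A^2]$ and $\int_X\sigma(A)^2\rho\,d\nu=\Tr[A\,\EE_{h^p}(A)]$ using the constancy of $\rho_{h^p}$ at a balanced metric. The only blemish is the displayed section-derivative term, where you wrote $\Tr[A\Pi_p(x)]^2$ (i.e.\ $\sigma(A)^2$) instead of $\Tr[A^2\Pi_p(x)]$ (i.e.\ $\sigma(A^2)$); your closing remark about the diagonal part integrating to the identity shows you intended the latter, which is what the argument requires.
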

\begin{proof}
Let us first compute $D_{\textbf{s}}\mu_\nu(A)\in\Herm(\C^{n_p})$,
for general $\textbf{s}\in\BB(H^0(X,L^p))$ and
$A\in\Herm(\C^{n_p})$ with $\Tr[A]=0$.
First recall from \cref{FSvar} that
\begin{equation}
\dt\Big|_{t=0}\,h_{e^{tA}\textbf{s}}^p=
-2\sigma_\s(A)\,h_{\textbf{s}}^p\,.
\end{equation}
Recall also that $\Pi_\s:X\to\Herm(\C^{n_p})$
denotes the coherent state
projector of
\cref{cohstateprojdef} associated with $H_\s\in\Prod(H^0(X,L^p))$
under the identification \cref{isomHerm}
induced by any $\s\in\BB(H^0(X,L^p))$. Writing
$\s=:\{s_j\}_{j=1}^{n_p}$, \cref{FSdef} implies
that for all $x\in X$, we have
\begin{equation}\label{cohstatecoord}
\Pi_{\s}(x)=\Big(\<s_j(x),s_k(x)\>_{h_\s^p}
\Big)_{j,\,k=1}^{n_p}\,.
\end{equation}
Then by \cref{momentdef,dnuef/dnu}, we compute
\begin{multline}\label{dmucomput}
D_{\textbf{s}}\mu_\nu(A)=\left(\int_X\,\dt\Big|_{t=0}
\<e^{tA}s_j,e^{tA}s_k\>_{h_{e^{tA}\s}^p}
\,d\nu_{\textbf{s}}
\right)_{j,\,k=1}^{n_p}\\
+\left(\int_X\,
\<s_j,s_k\>_{h_\s^p}
\,\dt\Big|_{t=0}d\nu_{e^{tA}\textbf{s}}\right)_{j,\,k=1}^{n_p}
-\left(\dt\Big|_{t=0}\frac{\Vol(d\nu_{e^{tA}\textbf{s}})}{n_p}\right)\Id_{\C^{n_p}}\\
=\int_X (A\Pi_\s+\Pi_\s A-2\sigma_\s(A)\Pi_\s)\,d\nu_{\textbf{s}}-\frac{2}{p}\int_X\,
\sigma_\s(A)\Pi_\s\,d\nu_{\textbf{s}}\\
-\,
\left(\frac{2}{pn_p}\int_X\,\sigma_\s(A)\,d\nu_\s\right)\,\Id_{\C^{n_p}}\,,
\end{multline}
so that using \cref{cohstateprojdef} and the fact that
$\Tr[A]=0$, we get
\begin{equation}\label{dmucomputfinal}
\frac{1}{2}\Tr[A\,D_{\textbf{s}}\mu_\nu(A)]=\int_X\sigma_\s(A^2)
\,d\nu_\s-
\left(1+\frac{1}{p}\right)\int_X\sigma_\s(A)^2\,d\nu_\s\,.
\end{equation}
On the other hand, for any $h\in\Met(L^p)^+$ and letting
$\s_p\in\BB(H^0(X,L^p))$ be orthonormal with respect to $L^2(h^p)$,
by \cref{cohstateprojdef}, \cref{Rawnprop} and
formula \cref{quatnchantr} for the
quantum channel of \cref{quantchandef}, we have
\begin{equation}\label{quantchancomput}
\begin{split}
\Tr[A^2]&=\int_X \sigma_{\s_p}(A^2)\,\rho_{h^p}
\,d\nu_{h}\,,\\
\Tr\left[A\,\EE_{h^p}(A)\right]&=
\int_X\,\sigma_{\s_p}(A)^2
\,\rho_{h^p}\,d\nu_{h}\,.
\end{split}
\end{equation}
Then comparing formulas \cref{dmucomputfinal}
and \cref{quantchancomput} with
$h\in\Met(L^p)^+$ balanced with respect to
the anticanonical volume form \cref{dnucandef},
so that $h^p=h_{\s_p}$, and using
\cref{balmet=balemb}, we get the result.
\end{proof}

Consider now the setting of the previous Section, so
that $\Aut(X)$ is discrete and $X$
admits a Kähler-Einstein metric $h\in\Met^+(K_X^*)$.
For any $k\in\N$ and $p\in\N$ big enough,
let $\textbf{s}_k(p)\in
\BB(H^0(X,L^p))$ be orthonormal bases
for the $L^2$-products
$L^2(h_k^p(p))\in\Prod(H^0(X,L^p))$ induced by
\cref{approxbal}.
The key part of the proof of \cref{mainth}
is the following result, giving
a lower bound for the
derivative of the moment map at the approximately balanced
bases. 
It is based on the asymptotics of \cref{Bpgap} on the spectral
gap of the quantum channel $\EE_{h_k^p(p)}$ associated to
$h^p_k(p)$, which allow us to bypass the difficult geometric
argument in the proofs of
Donaldson \cite{Don01} and Phong and Sturm \cite{PS04}
of the analogous result for \cref{Liouvilleex}.

\begin{prop}\label{dmu}
For any $k,\,k_0\in\N$ with $k\geq k_0>n+1$,
there exists a constant $\epsilon>0$ such that for all
$p\in\N$ big enough, for all
$B\in\Herm(\C^{n_p})$ with $\|B\|_{tr}\leq \epsilon p^{-k_0}$
and all $A\in\Herm(\C^{n_p})$ with $\Tr[A]=0$, we have
\begin{equation}\label{TrAdmuA>A}
\frac{n_p}{\Vol(d\nu_{e^B\s_k(p)})}
\Tr[A\,D_{e^B\s_k(p)}\mu_\nu(A)]
\geq\frac{\epsilon}{p}\,\|A\|_{tr}^2\,.
\end{equation}
\end{prop}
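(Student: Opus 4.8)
The starting point is the identity \cref{TrAdmuA} of \cref{dmupre}, but applied not at the balanced metric — which we do not yet know exists — but at the approximately balanced metrics $h_k^p(p)$, so I first need a version of \cref{dmupre} with error terms. Concretely, I would revisit the computation \cref{dmucomputfinal}, which holds for \emph{any} $\s\in\BB(H^0(X,L^p))$ and gives
\begin{equation*}
\tfrac12\Tr[A\,D_{\s}\mu_\nu(A)]=\int_X\sigma_\s(A^2)\,d\nu_\s-\Big(1+\tfrac1p\Big)\int_X\sigma_\s(A)^2\,d\nu_\s\,,
\end{equation*}
and compare it with the quantum-channel identities \cref{quantchancomput} written for $h:=h_k(p)$. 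The point is that $h_k^p(p)$ is \emph{not} balanced, so $\rho_{h_k^p(p)}$ is not exactly the constant $n_p/\Vol(d\nu_{h_k(p)})$; instead \cref{approxbal} controls the difference by $C_{k,m}p^{n-k}$ in every $\CC^m$-norm. Replacing $\rho_{h_k^p(p)}$ by this constant in \cref{quantchancomput}, and also passing from $\s=e^B\s_k(p)$ back to $\s_k(p)$ using \cref{FSvar2}, \cref{heBsp=hkp}, the elementary bound \cref{sigmae2B-12} on $\sigma_{\s_k(p)}(e^{2B})-1$, and \cref{vollem} to compare $d\nu_{e^B\s_k(p)}$ with $d\nu_{h_k(p)}$, I expect to obtain
\begin{equation*}
\frac{n_p}{2\Vol(d\nu_{e^B\s_k(p)})}\Tr[A\,D_{e^B\s_k(p)}\mu_\nu(A)]
=\Tr[A^2]-\Big(1+\tfrac1p\Big)\Tr[A\,\EE_{h_k^p(p)}(A)]+R_p(A)\,,
\end{equation*}
where the remainder satisfies $|R_p(A)|\leq C p^{-k_0}\|A\|_{tr}^2$ for $\|B\|_{tr}\leq\epsilon p^{-k_0}$ — here the choice $k_0>n+1$ together with $k\geq k_0$ is exactly what makes the errors from \cref{approxbal} (of size $p^{n-k}$, hence $o(1/p)$) and from the Sobolev-type estimate \cref{Sobunif} negligible against the main term, which will be of order $1/p$.

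Next I estimate the main term $\Tr[A^2]-\big(1+\tfrac1p\big)\Tr[A\,\EE_{h_k^p(p)}(A)]$ from below using \cref{Bpgap}. Decompose $A=A_0+A^\perp$ where $A_0$ is the component along $\Id_{\HH_p}$ in the trace inner product and $A^\perp$ is orthogonal to it; since $\Tr[A]=0$ we have $A_0=0$, so $A=A^\perp$ lies in the span of the eigenvectors of $\EE_{h_k^p(p)}$ with eigenvalues $\gamma_j(h_k^p(p))\leq\gamma_1(h_k^p(p))<1$ by \cref{quantchanprop}. Hence $\Tr[A\,\EE_{h_k^p(p)}(A)]\leq \gamma_1(h_k^p(p))\,\|A\|_{tr}^2$, and
\begin{equation*}
\Tr[A^2]-\Big(1+\tfrac1p\Big)\Tr[A\,\EE_{h_k^p(p)}(A)]\geq\Big(1-\gamma_1(h_k^p(p))-\tfrac1p\gamma_1(h_k^p(p))\Big)\|A\|_{tr}^2\,.
\end{equation*}
By \cref{Bpgap} — applied with the uniformity in the metric, valid since $h_k(p)\to h_\infty$ smoothly and hence stays in a bounded set over which the anticanonical volume map is bounded below — we have $1-\gamma_1(h_k^p(p))=\lambda_1(h_k(p))/(4\pi p)+O(p^{-2})$, and $\lambda_1(h_k(p))\to\lambda_1(h_\infty)$. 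The decisive input is \cref{lambda1>4pi}: $\lambda_1(h_\infty)>4\pi$, so for $p$ large $1-\gamma_1(h_k^p(p))-\tfrac1p\gamma_1(h_k^p(p))\geq \dfrac{\lambda_1(h_\infty)-4\pi}{8\pi p}>0$, say. Combining with the remainder bound and shrinking $\epsilon$ to absorb $R_p$, we get the claimed inequality \cref{TrAdmuA>A} with a possibly smaller $\epsilon>0$.

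The main obstacle is the error bookkeeping in the first paragraph: one must track how the non-constancy of $\rho_{h_k^p(p)}$, the perturbation from $\s_k(p)$ to $e^B\s_k(p)$, and the metric-dependence of the anticanonical volumes (controlled via \cref{vollem}) all enter $R_p(A)$, and verify that each contributes $O(p^{-k_0})\|A\|_{tr}^2$ rather than $O(1/p)\|A\|_{tr}^2$ — otherwise the gap term would be swamped. This is where the hypothesis $k_0>n+1$, the factor $p^{n+m/2}$ in \cref{Sobunif}/\cref{sigmaG}, and the freedom to take $k$ as large as we like (so that $p^{n-k}$ beats any fixed power) are all used in concert. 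Once that is in hand, the lower bound is a two-line consequence of the spectral gap asymptotics and the Lichnerowicz--Matsushima bound.
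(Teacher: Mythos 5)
Your proposal is correct and follows essentially the same route as the paper's proof: an approximate version of \cref{dmupre} at $e^B\s_k(p)$ with error controlled via \cref{approxbal}, \cref{FSvar2}, \cref{sigmae2B-12} and \cref{vollem}, followed by the spectral-gap lower bound from \cref{Bpgap} (with uniformity in the metric) and \cref{lambda1>4pi}. The only cosmetic discrepancies are that the remainder is really $O(p^{n-k_0})\|A\|_{tr}^2$ rather than $O(p^{-k_0})\|A\|_{tr}^2$ (still $o(1/p)$ under $k_0>n+1$, as you note), and that the paper's error bookkeeping here rests on the elementary Cauchy--Schwarz bounds \cref{basicest} rather than on \cref{Sobunif}.
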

\begin{proof}
The proof consists of an approximate version of \cref{dmupre}, whose
proof will be used in a crucial way.
We will use the following inequality, which holds
for any triple of Hermitian matrices
$A,\,B,\,G\in\Herm(\C^{n_p})$ as a consequence of
Cauchy-Schwarz inequality,
\begin{equation}\label{trid}
\left|\Tr[ABG]\right|\leq\|A\|_{tr}\|B\|_{tr}\|G\|_{op}\,.
\end{equation}
By \cref{cohstateprojdef} and the fact that
$\|\Pi_\s\|_{tr}=\|\Pi_\s\|_{op}=1$, this shows that
for all $A\in\Herm(\C^{n_p})$ and all
$\s\in\BB(H^0(X,L^p))$,
\begin{equation}\label{basicest}
|\sigma_\s(A)|_{\CC^0}\leq \|A\|_{tr}\quad~~\text{and}~~\quad
|\sigma_\s(A^2)|_{\CC^0}\leq\|A\|^2_{tr}\,.
\end{equation}
Using \cref{FSvar2}, the submultiplicativity of the operator
norm and the fact that $\|B\|_{op}\leq\|B\|_{tr}$ for all
$B\in\Herm(\C^{n_p})$, the inequality \cref{trid}
also shows that
that for any $\epsilon>0$,
there is a constant $C>0$
such that for all $B\in\Herm(\C^{n_p})$
with $\|B\|_{tr}\leq\epsilon p^{-k_0}$ and all $p\in\N$,
we have
\begin{equation}\label{sigmaeBest}
\begin{split}
|\sigma_{e^{B}\s}(A)^2-\sigma_\s(A)^2|_{\CC^0}
&\leq 2\|A\|_{tr}\left|
\sigma_{\s}(e^{2B})^{-1}
\sigma_{\s}(e^{B}Ae^{B})-\sigma_\s(A)\right|_{\CC^0}\\
&\leq Cp^{-k_0}\|A\|_{tr}^2\,,
\end{split}
\end{equation}
and in the same way,
\begin{equation}
\begin{split}
|\sigma_{e^{B}\s}(A^2)-\sigma_\s(A^2)|_{\CC^0}
&=\left|
\sigma_{\s}(e^{2B})^{-1}
\sigma_{\s}(e^{B}A^2e^{B})-\sigma_\s(A^2)\right|_{\CC^0}\\
&\leq Cp^{-k_0}\|A\|_{tr}^2\,.
\end{split}
\end{equation}
Consider the operator $S_p$ acting on
$A\in\Herm(\C^{n_p})$ by
\begin{equation}\label{Spdef}
S_p(A):=A-\left(1+\frac{1}{p}\right)\EE_{h_k^p(p)}(A)\,.
\end{equation}
Assume now $k\geq k_0>n$, and recall that
$\s_k(p)\in\BB(H^0(X,L^p))$ is an orthonormal
basis for $L^2(h^p_k(p))$, for all $p\in\N$ big enough.
Then plugging $\s=e^{B}\s_k(p)$ into
\cref{dmucomputfinal} and comparing with
\cref{quantchancomput} for $h_k(p)$, we can use
\cref{approxbal,vollem}
together with \cref{basicest,sigmaeBest,npexp},
to get a constant $C>0$
such that for all $p\in\N$ big enough,
for all $B\in\Herm(\C^{n_p})$
with $\|B\|_{tr}\leq C^{-1} p^{-k_0}$ and 
for all $A\in\Herm(\C^{n_p})$ with $\Tr[A]=0$,
we have
\begin{multline}\label{dmuestN}
\left|\frac{n_p}{2\Vol(d\nu_{e^B\s_k(p)})}
\Tr[A\,D_{e^B\s_k(p)}\,\mu_\nu(A)]-\Tr[A\,
S_p(A)]\right|\\
\leq\int_X\left|\sigma_{e^B\s_k(p)}(A^2)
\frac{n_p}{\Vol(d\nu_{e^B\s_k(p)})}
\frac{d\nu_{e^B\s_k(p)}}{d\nu_{h_k(p)}}-
\sigma_{\s_k(p)}(A^2)\rho_{h_k^p(p)}\right|\,
d\nu_{h_k(p)}\\
+\int_X\left|\sigma_{e^B\s_k(p)}(A)^2\frac{n_p}
{\Vol(d\nu_{e^B\s_k(p)})}\frac{d\nu_{e^B\s_k(p)}}{d\nu_{h_k(p)}}-
\sigma_{\s_k(p)}(A)^2\rho_{h_k^p(p)}\right|d\nu_{h_k(p)}\\
\leq C\,p^{n-k_0}\|A\|_{tr}^2\,.
\end{multline}
Recall that for any $h\in\Met^+(L)$, we write
$\lambda_1(h)>0$ for the first positive eigenvalue
of the Riemannian Laplacian of $(X,g^{TX}_h)$ acting
on $\cinf(X,\C)$.
Then formula \cref{approxbalmet} shows that
there exists
a constant $C>0$ such that for all $p\in\N$,
we have
\begin{equation}
|\lambda_1(h_k(p))-\lambda_1(h_\infty)|\leq C/p\,.
\end{equation}
Using the uniformity
in \cref{Bpgap},
this gives a constant $C>0$ such that for all $p\in\N$,
\begin{equation}\label{Spest}
\begin{split}
\Tr[A\,S_p(A)]
&\geq\|A\|_{tr}^2-\left(1+\frac{1}{p}\right)\left(1-
\frac{\lambda_1(h_\infty)}{4\pi p}
+Cp^{-2}\right)\|A\|_{tr}^2\\
&\geq\left(\frac{\lambda_1(h_\infty)-4\pi}{4\pi p}-Cp^{-2}
\left(1+\frac{1}{p}\right)\right)
\|A\|^2_{tr}\,.
\end{split}
\end{equation}
Using \cref{lambda1>4pi} and assuming $k\geq k_0>n+1$,
we get from the estimates \cref{dmuestN} and \cref{Spest}
a constant $\epsilon>0$ such that for all
$p\in\N$ big enough, for all $B\in\Herm(\C^{n_p})$
with $\|B\|_{tr}<\epsilon p^{-k_0}$
and all $A\in\Herm(\C^n)$ with $\Tr[A]=0$, we have
\begin{equation}
\frac{n_p}{\Vol(d\nu_{e^B\s_k(p)})}\Tr[A\,D_{e^B\s_k(p)}
\,\mu_\nu(A)]
\geq\frac{\epsilon}{p}\|A\|_{tr}^2\,.
\end{equation}
This gives the result.
\end{proof}

In the following result, we show that
the moment map Lemma of Donaldson
in \cite[Prop.\,17]{Don01} is valid in our setting,
although we do not exhibit any associated Kähler structure.

\begin{prop}\label{Donlem}
Fix $p\in\N$ and assume that there exist $\s\in\BB(H^0(X,L^p))$
and $\lambda,\,\delta>0$ such that
\begin{itemize}
\item[$(1)$] $\lambda\,\|\mu_\nu(\s)\|_{tr}<\delta\,;$
\item[$(2)$] $\lambda\Tr[A\,D_{e^{B}\s}\mu_\nu(A)]\geq \|A\|^2_{tr}
~\text{for all}~A\in\Herm(\C^{n_p})~\text{such that}~\Tr[A]=0\\
\text{and all}~B\in\Herm(\C^{n_p})~\text{such that}~
\|B\|_{tr}\leq\delta$.
\end{itemize}
Then there exists $B\in\Herm(\C^{n_p})$
with $\|B\|_{tr}\leq\delta$
and $\mu_\nu(e^{B}\s)=0$.
\end{prop}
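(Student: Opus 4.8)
The plan is to run Donaldson's negative gradient flow for the trace-norm squared $\|\mu_\nu\|_{tr}^2$ of the moment map, following \cite[Prop.\,17]{Don01}. Using the identification \cref{isomHerm}, I would consider on $\BB(H^0(X,L^p))$ the smooth vector field whose value at a basis is $-\mu_\nu$ of that basis; this is well defined because $\Tr[\mu_\nu(\cdot)]=0$ by \cref{sumprop}, so the velocity lies in the Hermitian (``noncompact'') directions. Let $\s_t$ be the integral curve with $\s_0=\s$, defined on its maximal interval $[0,T)$. Since conjugating a basis by $U\in U(n_p)$ conjugates $\mu_\nu$ by $U$ (immediate from \cref{momentdef}), while $\|\cdot\|_{tr}$ is $U(n_p)$-invariant, I may write $\s_t=U_t\,e^{B_t}\s$ with $U_t\in U(n_p)$ and $B_t\in\Herm(\C^{n_p})$; by \cref{geod} and the nonpositive curvature of the symmetric space $\Prod(H^0(X,L^p))$, the quantity $\|B_t\|_{tr}$ equals the distance from $H_\s$ to $H_{\s_t}$, hence is bounded by the length $\int_0^t\|\mu_\nu(\s_s)\|_{tr}\,ds$ of the curve.

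First I would establish the differential inequality. Differentiating $\mu_\nu$ along the flow and using the $U(n_p)$-equivariance to replace $D_{\s_t}\mu_\nu$ by $D_{e^{B_t}\s}\mu_\nu$, one gets
\[
\frac{d}{dt}\|\mu_\nu(\s_t)\|_{tr}^2
=-2\,\Tr\big[\mu_\nu(\s_t)\,D_{\s_t}\mu_\nu(\mu_\nu(\s_t))\big]
=-2\,\Tr\big[M_t\,D_{e^{B_t}\s}\mu_\nu(M_t)\big]\,,
\]
where $M_t:=\mu_\nu(e^{B_t}\s)$ is unitarily conjugate to $\mu_\nu(\s_t)$, hence traceless with $\|M_t\|_{tr}=\|\mu_\nu(\s_t)\|_{tr}$. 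As long as $\|B_t\|_{tr}\leq\delta$, hypothesis $(2)$ applies with $A=M_t$ and gives $\frac{d}{dt}\|\mu_\nu(\s_t)\|_{tr}^2\leq-\frac{2}{\lambda}\|\mu_\nu(\s_t)\|_{tr}^2$, hence $\|\mu_\nu(\s_t)\|_{tr}\leq\|\mu_\nu(\s)\|_{tr}\,e^{-t/\lambda}$ on any such interval.

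The main point — and the step I expect to require the most care, though it is routine — is the bootstrap ensuring that the regime $\|B_t\|_{tr}\leq\delta$ persists for all $t$. Set $t_*:=\sup\{\,t\in[0,T):\|B_s\|_{tr}<\delta\ \text{for all}\ s\in[0,t]\,\}$. On $[0,t_*)$ the decay estimate yields
\[
\|B_t\|_{tr}\leq\int_0^t\|\mu_\nu(\s_s)\|_{tr}\,ds
\leq\lambda\|\mu_\nu(\s)\|_{tr}\big(1-e^{-t/\lambda}\big)
<\lambda\|\mu_\nu(\s)\|_{tr}<\delta
\]
by hypothesis $(1)$, so $\|B_t\|_{tr}$ stays bounded away from $\delta$ on $[0,t_*)$. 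By continuity this forces $t_*=T$; then the whole curve stays in the fixed closed metric ball of radius $\lambda\|\mu_\nu(\s)\|_{tr}$ about $\s$, which is compact by Hopf--Rinow, so the flow cannot escape in finite time and $T=+\infty$, whence also $t_*=+\infty$.

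It remains to pass to the limit. Since $\|\mu_\nu(\s_t)\|_{tr}$ decays exponentially, the curve $\s_t$ has finite length and therefore converges, by completeness of $\BB(H^0(X,L^p))$, to some $\s_\infty$ with $\mu_\nu(\s_\infty)=0$ by continuity of $\mu_\nu$. Writing the polar decomposition of the element of $\GL(\C^{n_p})$ carrying $\s$ to $\s_\infty$ and invoking the $U(n_p)$-equivariance of $\mu_\nu$ once more, I obtain $B\in\Herm(\C^{n_p})$ with $\mu_\nu(e^{B}\s)=0$ and $H_{e^B\s}=H_{\s_\infty}$, so that $\|B\|_{tr}=\textup{dist}(H_\s,H_{\s_\infty})\leq\lambda\|\mu_\nu(\s)\|_{tr}<\delta$, which is exactly the claim. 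Beyond the bootstrap, everything used — the Grönwall estimate, the length--distance comparison via \cref{geod}, completeness and nonpositive curvature of $\BB(H^0(X,L^p))$ and $\Prod(H^0(X,L^p))$, and the $U(n_p)$-equivariance of $\mu_\nu$ — is standard.
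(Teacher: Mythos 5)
Your proposal is correct and follows essentially the same route as the paper's proof: the negative gradient flow $\dot{\s}_t=-\mu_\nu(\s_t)$, the reduction of hypothesis $(2)$ via $U(n_p)$-equivariance, the Grönwall decay estimate, the length bound $\int_0^\infty\|\mu_\nu(\s_t)\|_{tr}\,dt\leq\lambda\|\mu_\nu(\s)\|_{tr}<\delta$, and convergence by completeness. Your bootstrap via $t_*$ is just a rephrasing of the paper's open--closed argument, and your Hopf--Rinow remark merely makes explicit the global existence of the flow that the paper takes for granted.
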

\begin{proof}
First note that for any unitary endomorphism
$U\in U(n_p)$ and any $\s\in\BB(H^0(X,L^p))$,
\cref{momentdef} shows that $\mu_\nu(U\s)=U\mu_\nu(\s)\,U^*$.
Thus for any $A,\,B\in\Herm(\C^{n_p})$, one
computes that
\begin{equation}
\begin{split}
\Tr[A\,D_{U\s}\mu_\nu(A)]
&=\dt\Big|_{t=0}\Tr[A\,\mu_\nu(e^{tA}U\s)]\\
&=\Tr[U^*AU\,D_\s\mu_\nu(U^*AU)]\,.
\end{split}
\end{equation}
In particular, assumption (2) is equivalent to
\begin{itemize}
\item[$(2')$] $\lambda\Tr[A\,D_{Ue^{B}\s}\mu_\nu(A)]\geq \|A\|^2_{tr}
~\text{for all}~A\in\Herm(\C^{n_p})~\text{such that}~\Tr[A]=0,\\
\text{all}~U\in U(n_p)~
\text{and all}~B\in\Herm(\C^{n_p})~\text{such that}~
\|B\|_{tr}\leq\delta$.
\end{itemize}
Let us now consider
$\mu_\nu:\BB(H^0(X,L^p))\to\Herm(\C^{n_p})$
as a vector field
on $\BB(H^0(X,L^p))$ via the identification
\cref{isomEnd}.
Let $\s\in\BB(H^0(X,L^p))$ be such that
assumptions (1) and (2) are satisfied, and let
$\s_t\in\BB(H^0(X,L^p))$ for all $t>0$ be the solution of the ODE
\begin{equation}\label{stdef}
\left\{
\begin{array}{l}
  \dt\,\s_t=-\mu_\nu(\s_t)\quad\text{for all}\quad t\geq 0\,, \\
  \\
  \s_0=\s\,.
\end{array}
\right.
\end{equation}
If $\mu_\nu(\s)=0$, then the result is trivially satisfied,
so that we can assume $\mu_\nu(\s)\neq 0$, in which case
$\mu_\nu(\s_t)\neq 0$ for all $t\geq 0$.
Let $t_0\geq 0$ be such that
there exist $U_t\in U(n_p)$ and $B_t\in\Herm(\C^{n_p})$
with $\|B_t\|_{tr}\leq\delta$ such that $\s_t=U_te^{B_t}\s$
for all $t\in[0,t_0]$.
Using assumption
$(2')$ and recalling that $\Tr[\mu_\nu]=0$ by \cref{sumprop},
for all $t\in[0,t_0]$ we have
\begin{equation}
-\lambda\dt\|\mu_\nu(\s_t)\|^2_{tr}=
2\lambda\Tr[\mu_\nu(\s_t)\,D_{\s_t}\mu_\nu(\mu_\nu(\s_t))]\geq
2\|\mu_\nu(\s_t)\|^2_{tr}\,.
\end{equation}
By derivation of the square, this implies
$\lambda\dt\|\mu_\nu(\s_t)\|_{tr}
\leq-\|\mu_\nu(\s_t)\|_{tr}$ for all
$t\in[0,t_0]$, so that using Grönwall's lemma with initial
condition (1) and the fact that
$\mu_\nu(\s_t)=U_t\mu_\nu(e^{B_t}\s)\,U^*_t$, we get
\begin{equation}\label{Gronwallappli}
\|\mu_\nu(e^{B_t}\s)\|_{tr}=
\|\mu_\nu(\s_t)\|_{tr}\leq e^{-t/\lambda}\,\|\mu_\nu(\s_0)\|_{tr}<
\frac{\delta}{\lambda}\,e^{-t/\lambda}\,.
\end{equation}
Let us now consider $\Prod(H^0(X,L^p))$ as a symmetric
space via the quotient map \cref{symmap}, and
recall that the geodesics are the image of the $1$-parameter
groups of the action of $\GL(\C^{n_p})$ as in formula
\cref{geod}.
Then by equation \cref{stdef}, the
Riemannian length $L(t_0)\geq 0$
of the path
$\{t\mapsto H_{\s_t}\}_{t\in[0,t_0]}\subset\Prod(H^0(X,L^p))$ satisfies
\begin{equation}\label{path<delta}
L(t_0)=\int_0^{t_0}\,\|\mu_\nu(\s_t)\|_{tr}\,dt
<\frac{\delta}{\lambda}\int_0^{+\infty}
e^{-t/\lambda}\,dt=\delta\,.
\end{equation}
This means that there exists $\epsilon>0$
such that all points of
$\{t\mapsto H_{\s_t}\}_{t\in[0,t_0+\epsilon]}$
can be joined by a geodesic of
length strictly less than $\delta$, i.e., that for each
$t\in[0,t_0+\epsilon]$, there exists $B_t\in\Herm(\C^{n_p})$
with $\|B_t\|_{tr}\leq\delta$ such that
$H_{\s_t}=H_{e^{B_t}\s}$, so that there exists $U_t\in U(n_p)$
such that $\s_t=U_te^{B_t}\s$.
Thus $I:=\{t_0\geq 0\,|\,L(t_0)<\delta\}$
is non-empty, open and closed in $[0,+\infty[$,
so that $I=[0,+\infty[$.
In particular, the path $\{t\mapsto H_{\s_t}\}_{t>0}$
has total Riemannian length strictly less than $\delta$,
so that it converges to a limit point 
$H_{e^{B_\infty}\s}\in\Prod(H^0(X,L^p))$ by completeness,
with
$B_\infty\in\Herm(\C^{n_p})$ satisfying
$\|B_\infty\|_{tr}\leq\delta$.
Finally, inequality \cref{Gronwallappli}
for all $t>0$
implies
\begin{equation}
\|\mu_\nu(e^{B_\infty}\s)\|_{tr}=
\lim_{t\fl+\infty}\|\mu_\nu(e^{B_t}\s)\|_{tr}=0\,.
\end{equation}
This gives the result.
\end{proof}

With all these prerequisites in hand, we can finally give the
proof of \cref{mainth}.
\newline

{\noindent
\textbf{Proof of \cref{mainth}.}}
First note by
\cref{Rawnprop} and formula \cref{cohstatecoord}
that for any
$k\in\N$ and $p\in\N$ big enough,
the value of the
moment map of \cref{momentdef}
at
the orthonormal basis $\s_k(p)\in\BB(H^0(X,L^p))$
for $L^2(h_k^p(p))$ satisfies
the following formula ,
\begin{equation}\label{muRawn}
\frac{n_p}{\Vol(d\nu_{\s_k(p)})}\mu_\nu(\s_k(p))=\int_X
\Pi_{\s_k(p)}\,\left(\frac{n_p}{\Vol(d\nu_{\s_k(p)})}
\frac{d\nu_{\s_k(p)}}{d\nu_{h_k(p)}}-
\rho_{h_k^p(p)}\right)
\,d\nu_{h_k(p)}\,.
\end{equation}
Thus using \cref{approxbal,vollem}
together with the estimate
\cref{npexp} for the dimension and the fact that
$\|\Pi_\s\|_{tr}=1$ for all $\s\in\BB(H^0(X,L^p))$,
we get a constant $C>0$ such that for all $p\in\N$, we have
\begin{equation}
\begin{split}
\frac{n_p}{\Vol(d\nu_{\s_k(p)})}&\|\mu_\nu(\s_k(p))\|_{tr}\\
&\leq\Vol(d\nu_{h_k(p)})\,
\left|\,\frac{n_p}{\Vol(d\nu_{\s_k(p)})}
\frac{d\nu_{\s_k(p)}}{d\nu_{h_k(p)}}
-\rho_{h_k^p(p)}\,\right|_{\CC^0}\leq C p^{n-k}\,.
\end{split}
\end{equation}
Thus taking $k_0>n+1$, we can then choose $k>k_0+n+1$,
and \cref{dmu}
shows that 
\cref{Donlem} applies
for $p\in\N$ big enough and $\s=\s_k(p)$, with
\begin{equation}
\lambda=\frac{p}{\epsilon}\frac{n_p}{\Vol(d\nu_{\s_k(p)})}
~~~~~\text{and}~~~~~
\delta=\frac{C}{\epsilon}p^{n+1-k}\,.
\end{equation}
This gives a sequence of Hermitian endomorphisms
$B_p\in\Herm(\C^{n_p})$, $p\in\N$, with 
$\|B_p\|_{tr}\leq \epsilon p^{-k_0}$ such that
$\mu_\nu(e^{B_p}\s_k(p))=0$ for all $p\in\N$ big enough.
By \cref{momentbal},
the Hermitian metrics $h_p:=h_{e^B\s_k(p)}^p\in\Met^+(L^p)$
are then anticanonically balanced
for all $p\in\N$ big enough,
and the associated Kähler forms satisfy
\begin{equation}
\om_{h_p}=p\,\om_{e^B\s_k(p)}\,,
\end{equation}
where $\om_{e^B\s_k(p)}$ is induced by $h_{e^B\s_k(p)}$.
If we also chose $k_0> n+1+m/2$ for some
$m\in\N$, \cref{approxomkplem} shows
the $\CC^{m}$-convergence \cref{mainthfla} to the Kähler-Einstein
form $\om_\infty$. This establishes \cref{mainth}.

\qed

\section{Donaldson's iterations towards anticanonically balanced metrics}
\label{donsec}


In this Section, we consider a Fano manifold $X$, together
with its anticanonical line bundle $L:=K_X^*$
and the associated anticanonical
volume map \cref{dnucandef}.
We will apply \cref{mainth}
to establish the exponential
convergence of the associated Donaldson's iterations
and compute the optimal rate of convergence.

\subsection{Donaldson map}

Our goal is to study the following dynamical system on
the space $\Prod(H^0(X,L^p))$ of Hermitian inner products on
$H^0(X,L^p)$. To this end, recall \cref{FSdef} for the
Fubini-Study map $\FS:\Prod(H^0(X,L^p))\to\Met^+(L^p)$.

\begin{defi}\label{Tdef}
For any $p\in\N$ big enough, the associated
anticanonical \emph{Donaldson map}
is defined by
\begin{equation}\label{Tdeffla}
\TT_\nu:=\Hilb_\nu\circ\,\FS:\Prod(H^0(X,L^p))\longrightarrow
\Prod(H^0(X,L^p))\,,
\end{equation}
where $\Hilb_\nu:\Met^+(L^p)\to\Prod(H^0(X,L^p))$ is the
\emph{anticanonical Hilbert map} defined by \cref{L2intro} using
the anticanonical
volume form \cref{dnucandef}.
\end{defi}

By construction, the balanced products of \cref{baldef} coincide
with the fixed points of the Donaldson map.
Using formula \cref{hFSdef} for the Fubini-Study metric
and writing $h_H^p:=\FS(H)\in\Met^+(L^p)$
for any $H\in\Prod(H^0(X,L^p))$,
we get the explicit
description
\begin{equation}\label{Tfla}
\TT_\nu(H)=\frac{n_p}{\Vol(d\nu_{h_H})}\,\int_{X}
\<\Pi_H(x)\,\cdot\,,\,\cdot\,\>_{H}
\,d\nu_{h_H}(x)\,,
\end{equation}
For any $h^p\in\Met^+(L^p)$ and $H\in\Prod(H^0(X,L^p))$,
consider the natural identifications
\begin{equation}\label{Metid}
\begin{split}
\cinf(X,\R)&\xrightarrow{~\sim~}T_{h^p}\Met^+(L^p)\\
f~&\longmapsto~\dt\Big|_{t=0}\,e^{tf}\,h^p\,,
\end{split}
\end{equation}
and
\begin{equation}\label{Prodid}
\begin{split}
\cL(H^0(X,L^p),H)&\xrightarrow{~\sim~}T_H\Prod(H^0(X,L^p))\\
A~&\longmapsto~\dt\Big|_{t=0}\,\<e^{tA}\cdot,\cdot\>_H\,.
\end{split}
\end{equation}
In the notations of \cref{momentsec}, if
$s\in\BB(H^0(X,L^p))$ is such that $H=H_\s$, then
for any $A\in\cL(H^0(X,L^p),H)$ we have
\begin{equation}\label{diffid}
H_{e^{A}\s}=H(e^{-2A}\,\cdot\,,\,\cdot\,)\,.
\end{equation}
In particular, the identification \cref{Prodid} differs
from the identification \cref{isomHerm} induced by the quotient map
\cref{symmap} by a factor of $-2$.

Recall now \cref{cohstateprojdef,BTquantdef}.

\begin{prop}\label{dHilb}
The derivative of the anticanonical
Hilbert map
at $h^p\in\Met^+(L^p)$ 
is given by
\begin{equation}\label{dHilbfla}
\begin{split}
D_{h^p}\Hilb_\nu:\cinf(X,\R)&\longrightarrow
\cL(H^0(X,L^p),\Hilb_\nu(h^p))\,,\\
f\,&\longmapsto\,\left(1+\frac{1}{p}\right)T_{h^p}(f)-\frac{1}
{p}
\left(\int_X\,f\,\frac{d\nu_h}{\Vol(d\nu_{h})}\right)\,\Id\,.
\end{split}
\end{equation}
The derivative of the Fubini-Study map
at $H\in\Prod(H^0(X,L^p))$ is given by
\begin{equation}\label{DFSfla}
\begin{split}
D_H\FS:\cL(H^0(X,L^p),H)&\longrightarrow\cinf(X,\R)\,,\\
A\,&\longmapsto\,\sigma_H(A)\,.
\end{split}
\end{equation}
\end{prop}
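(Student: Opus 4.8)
The plan is to establish both identities by direct variational computation, parametrizing the relevant one‑parameter families and reading off the derivatives through the canonical identifications \cref{Metid,Prodid} (together with \cref{isomHerm} for the Fubini--Study part).

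For $D_{h^p}\Hilb_\nu$ I would argue as follows. A variation $e^{tf}h^p\in\Met^+(L^p)$ of the metric on $L^p$ corresponds to the variation $e^{tf/p}h\in\Met^+(L)$ of its induced $p$-th root, so \cref{dnuef/dnu} gives $d\nu_{e^{tf/p}h}=e^{tf/p}\,d\nu_h$. Substituting this and $|\cdot|^2_{e^{tf}h^p}=e^{tf}|\cdot|^2_{h^p}$ into the defining formula \cref{L2intro} yields, for all $s_1,s_2\in H^0(X,L^p)$,
\[
\<s_1,s_2\>_{\Hilb_\nu(e^{tf}h^p)}=\frac{n_p}{\Vol(e^{tf/p}\,d\nu_h)}\int_X e^{t(1+1/p)f(x)}\<s_1(x),s_2(x)\>_{h^p}\,d\nu_h(x)\,.
\]
Differentiating at $t=0$, the integrand contributes $(1+\tfrac1p)$ times the integral $\int_X f\,\<s_1(x),s_2(x)\>_{h^p}\,d\nu_h(x)=\<T_{h^p}(f)s_1,s_2\>_{L^2(h^p)}$, recognized via \cref{Tpf}, while the normalizing volume factor contributes $-\tfrac1p\big(\int_X f\,d\nu_h/\Vol(d\nu_h)\big)\<s_1,s_2\>_{L^2(h^p)}$. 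Rescaling $L^2(h^p)$ to $\Hilb_\nu(h^p)$ via \cref{Hilb=L2} then gives
\[
\dt\Big|_{t=0}\<s_1,s_2\>_{\Hilb_\nu(e^{tf}h^p)}=\Big(1+\tfrac1p\Big)\<T_{h^p}(f)s_1,s_2\>_{\Hilb_\nu(h^p)}-\tfrac1p\,\frac{\int_X f\,d\nu_h}{\Vol(d\nu_h)}\,\<s_1,s_2\>_{\Hilb_\nu(h^p)}\,,
\]
which under \cref{Prodid} at $H=\Hilb_\nu(h^p)$ is exactly \cref{dHilbfla}.

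For $D_H\FS$ I would fix $A\in\cL(H^0(X,L^p),H)$ and pick $\s\in\BB(H^0(X,L^p))$ with $H=H_\s$. By \cref{diffid} the curve $t\mapsto\<e^{tA}\cdot,\cdot\>_H$, whose velocity at $t=0$ is $A$ under \cref{Prodid}, equals $t\mapsto H_{e^{-tA/2}\s}$, so $\FS$ carries it to $h^p_{e^{-tA/2}\s}$. By \cref{FSvar} this equals $\sigma_\s(e^{-tA})^{-1}\,h^p_\s$, and since $\sigma_\s(\Id)=\Tr[\Pi_\s]=1$, differentiating at $t=0$ gives $\dt\big|_{t=0}h^p_{e^{-tA/2}\s}=\sigma_\s(A)\,h^p_\s$; under \cref{Metid} this is the function $\sigma_\s(A)=\sigma_H(A)$, which is \cref{DFSfla}.

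I do not expect a genuine obstacle, as the proposition is a variational identity; the only care needed is in the bookkeeping. The $1/p$ factors in \cref{dHilbfla} arise because the perturbation of $h^p$ reaches the anticanonical volume form $d\nu_h$ only through the $p$-th root, hence with weight $1/p$, and also through the normalizing constant $\Vol(d\nu_h)$; and in the Fubini--Study computation one must respect the factor $-2$ by which the identification \cref{Prodid} differs from \cref{isomHerm}.
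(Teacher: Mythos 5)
Your proof is correct and follows essentially the same route as the paper: for $D_{h^p}\Hilb_\nu$ you differentiate the defining integral using \cref{dnuef/dnu} (with the crucial $1/p$ weight from the induced metric on $L$) and identify the result via \cref{Tpf} and \cref{Hilb=L2}, and for $D_H\FS$ you combine \cref{diffid} with \cref{FSvar}, exactly as the paper does. The only difference is cosmetic — you exponentiate the full $t$-dependence before differentiating rather than applying the product rule term by term, and you spell out the Fubini–Study bookkeeping that the paper leaves implicit.
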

\begin{proof}
For any $f\in\cinf(X,\R)$ and $t\in\R$, set
\begin{equation}
h_t^p:=e^{tf}\,h^p\in\Met^+(L^p)\,.
\end{equation}
Then for any $s_1,\,s_2\in H^0(X,L^p)$, using \cref{dnuef/dnu}
and the fact that $T_{h^p}(1)=\Id$ by formula
\cref{Tpf}, we compute
\begin{equation}
\begin{split}
&\dt\Big|_{t=0}\<s_1,s_2\>_{\Hilb_\nu(h_t^p)}\\
&=\frac{n_p}{\Vol(d\nu_{h})}\left(\int_X\,\dt\Big|_{t=0}
\<s_1,s_2\>_{h^p_t}\,
d\nu_h+
\int_X\,\<s_1,s_2\>_{h^p}\,\dt\Big|_{t=0}d\nu_{h_t}\right)\\
&+\left(\dt\Big|_{t=0}\frac{n_p}{\Vol(d\nu_{h_t})}\right)
\int_X\,
\<s_1,s_2\>_{h_t^p}\,d\nu_h\\
&=\frac{n_p}{\Vol(d\nu_{h^p})}\int_X
\left(f+\frac{1}{p}f
-\frac{1}{p}\int_X\,f\,\frac{d\nu_h}{\Vol(d\nu_h)}\right)\,\<s_1,s_2\>_{h^p}\,d\nu_h\\
&=\frac{n_p}{\Vol(d\nu_{h^p})}
\left\langle \left(\left(1+\frac{1}{p}\right)T_{h^p}(f)
-\frac{1}{p}\int_X\,f\,\frac{d\nu_h}{\Vol(d\nu_h)}\right)
s_1,s_2\right\rangle_{L^2(h^p)}\,.
\end{split}
\end{equation}
Using formula \cref{Hilb=L2}, this
proves the first statement \cref{dHilbfla}.

On the other hand,
in the identifications \cref{Metid}, \cref{Prodid}
and using formula \cref{diffid}, the second statement
\cref{DFSfla} is a consequence of \cref{FSvar}.
\end{proof}
%

Let $H\in\Prod(H^0(X,L^p))$ be an anticanonically balanced
product, and
consider the setting of \cref{BTsec}
for the anticanonically balanced metric
$h^p_H:=\FS(H)\in\Met^+(L^p)$. Then in particular,
\cref{baldef} of a balanced product implies that
$\cL(H^0(X,L^p),H)$ and $\cL(\HH_p)$ coincide
as real Hilbert spaces.
Via the identification \cref{Prodid},
\cref{dHilb} implies the following formula for
the derivative of the anticanonical
Donaldson map at a fixed point.

\begin{cor}\label{dTT}
The differential of the anticanonical Donaldson map
at a fixed point $H\in\Prod(H^0(X,L^p))$
is given by the following
formula, for all $A\in\cL(\HH_p)$,
\begin{equation}\label{DTT=TT*}
D_H\TT_\nu(A)=\left(1+\frac{1}{p}\right)\EE_{h^p_H}(A)
-\frac{1}{p}\frac{\Tr[A]}{n_p}\,\Id_{\HH_p}\,.
\end{equation}
\end{cor}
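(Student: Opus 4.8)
The plan is to compute $D_H\TT_\nu$ via the chain rule applied to $\TT_\nu=\Hilb_\nu\circ\FS$, so that $D_H\TT_\nu=D_{h^p_H}\Hilb_\nu\circ D_H\FS$ with $h^p_H=\FS(H)$. Both factors are given explicitly by \cref{dHilb}: in the identifications \cref{Metid,Prodid}, the map $D_H\FS$ sends $A\in\cL(H^0(X,L^p),H)$ to $\sigma_H(A)$, and $D_{h^p_H}\Hilb_\nu$ sends $f\in\cinf(X,\R)$ to $\left(1+\tfrac1p\right)T_{h^p_H}(f)-\tfrac1p\big(\int_X f\,d\nu_h/\Vol(d\nu_h)\big)\Id$. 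Since $H$ is a fixed point, $\TT_\nu(H)=H$, so the source and target of $D_H\TT_\nu$ are both $T_H\Prod(H^0(X,L^p))\simeq\cL(H^0(X,L^p),H)$, which coincides with $\cL(\HH_p)$ as recalled just before the statement. Composing the two derivatives then gives
\[
D_H\TT_\nu(A)=\left(1+\frac1p\right)T_{h^p_H}\big(\sigma_H(A)\big)-\frac1p\left(\int_X\sigma_H(A)\,\frac{d\nu_h}{\Vol(d\nu_h)}\right)\Id_{\HH_p}\,.
\]

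Next I would identify the two terms. For the first, the fixed-point identity $H=\Hilb_\nu(h^p_H)$ together with \cref{Hilb=L2} gives $H=\tfrac{n_p}{\Vol(d\nu_h)}L^2(h^p_H)$; since $H$ and $L^2(h^p_H)$ differ by a positive scalar they induce the same orthogonal projectors, so $\Pi_H=\Pi_p$ and hence $\sigma_H=\sigma_{L^2(h^p_H)}$. By \cref{quantchandef} this is exactly $T_{h^p_H}\big(\sigma_H(A)\big)=\EE_{h^p_H}(A)$. For the second term, writing $\sigma_H(A)=\Tr[A\Pi_p]$ by \cref{cohstateprojdef} gives $\int_X\sigma_H(A)\,d\nu_h=\Tr\big[A\int_X\Pi_p\,d\nu_h\big]$; since $h^p_H$ is balanced, \cref{balmet=balemb} gives $\rho_{h^p_H}\equiv n_p/\Vol(d\nu_h)$, so the normalization \cref{W(X)=1} of \cref{Rawnprop} yields $\int_X\Pi_p\,d\nu_h=\tfrac{\Vol(d\nu_h)}{n_p}\Id_{\HH_p}$, whence $\int_X\sigma_H(A)\,d\nu_h/\Vol(d\nu_h)=\Tr[A]/n_p$. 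Substituting both into the displayed formula gives \cref{DTT=TT*}.

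I do not expect a serious obstacle: the content is an essentially mechanical consequence of \cref{dHilb}. The one point that requires attention is the bookkeeping of the tangent-space identifications, in particular that \cref{DFSfla} is expressed in the identification \cref{Prodid}, which differs by a factor $-2$ from the identification \cref{isomHerm} used elsewhere; invoking \cref{dHilb} directly, rather than recomputing $D_H\FS$ by hand, is what keeps the signs and factors transparent.
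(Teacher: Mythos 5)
Your proposal is correct and follows essentially the same route as the paper: chain rule through $\TT_\nu=\Hilb_\nu\circ\FS$ using \cref{dHilb}, identification of $\sigma_H$ with $\sigma_{L^2(h^p_H)}$ via the fixed-point (balanced) property, and evaluation of the integral term via \cref{balmet=balemb} and \cref{Rawnprop}. The paper's proof is just a terser version of the same computation.
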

\begin{proof}
\cref{baldef} of a balanced product implies that
$H$ coincides with $L^2(h^p_H)$ up to a multiplicative constant,
and \cref{cohstateprojdef}
then shows that
the Berezin symbol maps $\sigma_{L^2(h^p_H)}$
and $\sigma_H$ coincide.
Using \cref{Rawnprop,balmet=balemb}
for the balanced metric $h^p_H$, we then get
for all $A\in\cL(\HH_p)$,
\begin{equation}
\frac{n_p}{\Vol(d\nu_{h_H})}\left(\int_X\,\sigma_H(A)\,
d\nu_{h_H}\right)=\Tr[A]\,.
\end{equation}
Then using \cref{quantchandef,Tdef},
the result follows from \cref{dHilb}
and formula \cref{DFSfla}.
\end{proof}

\subsection{Energy functional}

In this Section, we consider a Fano manifold $X$
with $\Aut(X)$ is discrete, and show
that if the anticanonical Donaldson map of \cref{Tdef}
admits a fixed point, then  its
iterations converge to
this fixed point, which is unique up to a
multiplicative constant. The results in this
Section are essentially a combination of results
of  Berman \cite{Ber13} and
Berman, Boucksom, Guedj and Zeriahi
\cite{BBGZ13}.
We gather them here as they play a central
role in the proof of \cref{gapth} given in the
next Section.

Recall that we write $L:=K_X^*$ for the anticanonical
line bundle of $X$, and let us introduce the energy functional
$E:\Met^+(L^p)\to\R$ defined for any $h^p\in\Met^+(L^p)$ by
\begin{equation}
E(h^p):=-\log\Vol(d\nu_h)\,.
\end{equation}
It has been considered in \cite[\S\,6.3]{BBGZ13}
as a replacement of
the \emph{Aubin-Yau functional} in the anticanonical setting.
Its key property in our context
is the following Lemma of Berman \cite[Lemma 2.6]{Ber13},
for which we give a proof
as it is quite elementary.

\begin{lem}\label{Lfctlem}
For any $h^p\in\Met^+(L^p)$, we have
\begin{equation}
E(\FS\circ\Hilb_\nu(h^p))\leq E(h^p)\,.
\end{equation}
\end{lem}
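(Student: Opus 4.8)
The plan is to show the equivalent inequality $\Vol(d\nu_{\tilde h})\ge\Vol(d\nu_h)$, where we write $\tilde h^p:=\FS\circ\Hilb_\nu(h^p)\in\Met^+(L^p)$ and $h,\tilde h\in\Met^+(K_X^*)$ for the metrics on $L=K_X^*$ induced by $h^p$ and $\tilde h^p$. The first step is to identify $\tilde h^p$ explicitly in terms of the Rawnsley function $\rho_{h^p}$ of \cref{Rawnprop}. Since $\Hilb_\nu(h^p)$ differs from $L^2(h^p)$ by the positive constant $n_p/\Vol(d\nu_h)$ by \cref{Hilb=L2}, the coherent state projectors coincide, $\Pi_{\Hilb_\nu(h^p)}=\Pi_p$, so combining \cref{FSdef} with the defining property \cref{Rawndeffla} of $\rho_{h^p}$ gives, for every $s\in H^0(X,L^p)$ and $x\in X$,
\[
|s(x)|^2_{\tilde h^p}=\frac{n_p}{\Vol(d\nu_h)\,\rho_{h^p}(x)}\,|s(x)|^2_{h^p}\,;
\]
this is exactly identity \cref{Rdveq} obtained in the proof of \cref{balmet=balemb}. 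Taking $p$-th roots, it says $\tilde h=e^{g}h$ in $\Met^+(K_X^*)$ with $e^{g}=\bigl(n_p/(\Vol(d\nu_h)\,\rho_{h^p})\bigr)^{1/p}$.

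The second step applies the variation formula \cref{dnuef/dnu} for the anticanonical volume form, which gives $d\nu_{\tilde h}=e^{g}\,d\nu_h$ and hence
\[
\Vol(d\nu_{\tilde h})=\left(\frac{n_p}{\Vol(d\nu_h)}\right)^{1/p}\int_X\rho_{h^p}^{-1/p}\,d\nu_h\,.
\]
Now $t\mapsto t^{-1/p}$ is convex on $]0,+\infty[$, so Jensen's inequality applied to the probability measure $d\nu_h/\Vol(d\nu_h)$ yields
\[
\frac{1}{\Vol(d\nu_h)}\int_X\rho_{h^p}^{-1/p}\,d\nu_h\ \ge\ \left(\frac{1}{\Vol(d\nu_h)}\int_X\rho_{h^p}\,d\nu_h\right)^{-1/p}\,,
\]
and $\int_X\rho_{h^p}\,d\nu_h=n_p$ by \cref{Rawnprop} (see also \cref{npexp}). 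Substituting this back into the expression for $\Vol(d\nu_{\tilde h})$ produces exactly $\Vol(d\nu_{\tilde h})\ge\Vol(d\nu_h)$, i.e.\ $E(\tilde h^p)=-\log\Vol(d\nu_{\tilde h})\le-\log\Vol(d\nu_h)=E(h^p)$.

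I do not anticipate a genuine obstacle: every step is elementary once the first step is in place, and the identity for $\tilde h^p$ has essentially already been recorded in \cref{Rdveq}; the only care needed is in the bookkeeping of the conformal factor and of the normalising constant $n_p/\Vol(d\nu_h)$. It is worth noting, although it is not needed here, that the Jensen inequality above is an equality precisely when $\rho_{h^p}$ is constant, which by \cref{balmet=balemb} is exactly the condition that $h^p$ be balanced; thus $E$ is strictly decreased by $\FS\circ\Hilb_\nu$ on non-balanced metrics.
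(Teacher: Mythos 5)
Your proof is correct. The identification of $\FS\circ\Hilb_\nu(h^p)$ as $\bigl(n_p/(\Vol(d\nu_h)\,\rho_{h^p})\bigr)\,h^p$ is exactly \cref{Rdveq}, the volume computation via \cref{dnuef/dnu} is right, $t\mapsto t^{-1/p}$ is indeed convex on $]0,+\infty[$, and $\int_X\rho_{h^p}\,d\nu_h=n_p$ holds exactly (take the trace of \cref{W(X)=1}), so the single application of Jensen closes the argument.

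Your route differs from the paper's in an instructive way. The paper first establishes that $E$ is concave along the exponential path $t\mapsto e^{-tf}h$ joining $h$ to $\FS\circ\Hilb_\nu(h^p)$ (the second derivative is minus a variance, hence $\leq 0$ by Cauchy--Schwarz), deduces from the tangent-line bound at $t=0$ that $E(\FS\circ\Hilb_\nu(h^p))-E(h^p)\leq\int_X f\,d\nu_h/\Vol(d\nu_h)$ with $f=\frac{1}{p}\log(\Vol(d\nu_h)\rho_{h^p}/n_p)$, and then bounds that integral by $0$ using concavity of the logarithm. You instead compute $\Vol(d\nu_{\tilde h})$ in closed form and apply Jensen once to the convex function $t\mapsto t^{-1/p}$; this collapses the paper's two convexity steps (Jensen for $e^{-x}$ along the path, then Jensen for $\log$) into one, and is shorter and more elementary. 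What the paper's formulation buys in exchange is the concavity of $E$ along \emph{all} such paths, which is of independent use in the variational framework of \S 4 (compare \cref{BBGZ} and the monotonicity arguments in \cref{TTpcvth}); your argument is tailored to the single endpoint comparison. Your closing observation on the equality case is also correct: strict convexity of $t\mapsto t^{-1/p}$ forces $\rho_{h^p}$ constant, and then $\int_X\rho_{h^p}\,d\nu_h=n_p$ pins the constant to $n_p/\Vol(d\nu_h)$, which is the balanced condition of \cref{balmet=balemb}.
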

\begin{proof}
Let us first show that $E:\Met^+(L^p)\to\R$
is concave along paths in $\Met^+(L^p)$ of the
form
\begin{equation}\label{htdef}
t\longmapsto h_t^p:=e^{-tf}h^p\,,~t\in\R\,,
\end{equation}
for any $f\in\cinf(X,\R)$ such that $e^{-f}h^p$ is positive.
By \cref{dnuef/dnu}, for any $t\in\R$
we have
\begin{equation}\label{dtL}
\dt\,E(h_t^p)=\int_X\,f\,\frac{d\nu_{h_t}}{\Vol(d\nu_{h_t})}\,,
\end{equation}
so that using the Cauchy-Schwarz inequality, we get
\begin{equation}\label{dt2L}
\frac{\partial^2}{\partial t^2}\,E(h_t^p)=\left(\int_X\,f\,
\frac{d\nu_{h_t}}{\Vol(d\nu_{h_t})}
\right)^2-\int_X\,f^2\,\frac{d\nu_{h_t}}{\Vol(d\nu_{h_t})}
\leq 0\,.
\end{equation}
Recall the setting of \cref{BTsec} for $h^p\in\Met^+(L^p)$,
and let us take
\begin{equation}\label{varphidef}
f:=\frac{1}{p}\log\left(\frac{\Vol(d\nu_{h})}{n_p}\,\rho_{h^p}
\right)\,,
\end{equation}
so that $h_1^p=\FS\circ\Hilb_\nu(h^p)$ by
\cref{Rawnprop},
and consider the smooth function $\Phi:\R\to\R$
defined for any
$t\in\R$ by
\begin{equation}
\Phi(t):=E(h_t^p)-E(h_0^p)-t\int_X\,f\,\frac{d\nu_{h}}
{\Vol(d\nu_h)}\,.
\end{equation}
Then this function satisfies $f(0)=f'(0)=0$ by formula \cref{dtL} and is concave by formula \cref{dt2L},
so that in particular $f(1)\leq 0$ and
\begin{equation}
E(\FS\circ\Hilb_\nu(h^p))-E(h^p)
\leq \int_X\,f\,\frac{d\nu_h}{\Vol(d\nu_h)}
\,.
\end{equation}
Now using formula \cref{npexp} for the dimension,
the concavity of the logarithm implies
\begin{equation}
\int_X\,f\,\frac{d\nu_h}{\Vol(d\nu_h)}\leq
\frac{1}{p}
\log\left(\frac{1}{n_p}\int_X\,\rho_{h^p}\,d\nu_h\right)=0\,.
\end{equation}
This shows the result.
\end{proof}
%

From now on, we fix
a base point $H_0\in\Prod(H^0(X,L^p))$, and
identify any $H\in\Prod(H^0(X,L^p))$ with
a Hermitian endomorphism $H\in\cL(H^0(X,L^p),H_0)$ via
the formula 
\begin{equation}\label{basept}
H=H_0(H\cdot,\cdot)\,.
\end{equation}
Recall that $\Prod(H^0(X,L^p))$ is endowed with a natural
structure of a symmetric space via the quotient map
\cref{symmap}, with geodesics given by formula \cref{geod}.
The following result is a consequence of the
results of \cite{Ber09a,Ber09b} on positivity of direct images.

\begin{prop}\label{BBGZ}
{\cite[Lemma 7.2]{BBGZ13}}
Assume that $\Aut(X)$ is discrete. Then
the functional $\Psi:\Prod(H^0(X,L^p))\to\R$
defined for all $H\in\Prod(H^0(X,L^p))$ by
\begin{equation}\label{Psidef}
\Psi(H)=E(\FS(H))+\frac{1}{p}\frac{\log\det H}{n_p}
\end{equation}
is convex along geodesics of $\Prod(H^0(X,L^p))$, and
strictly convex when the geodesic is not
generated by a multiple of the identity.
\end{prop}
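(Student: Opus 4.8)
The plan is to handle the two summands of $\Psi$ separately. Along any geodesic $t\mapsto H_t$ of $\Prod(H^0(X,L^p))$ as in \cref{geod}, say generated by $A\in\Herm(\C^{n_p})$, the term $\frac{1}{pn_p}\log\det H_t$ is an \emph{affine} function of $t$ (in the symmetric‑space structure of \cref{geod} one has $\log\det H_t=\mathrm{const}-2t\,\Tr A$). Hence it suffices to prove that $t\mapsto E(\FS(H_t))$ is convex, and strictly convex unless $A$ is a multiple of the identity. Writing $h_H\in\Met^+(L)$ for the metric with $h_H^p=\FS(H)$, its local weight $\phi_H$ is a smooth metric on $L=-K_X$, and by \cref{dnucandef} the volume form $d\nu_{h_H}$ is, globally, the canonical measure $e^{-\phi_H}$ attached to this metric; thus $E(\FS(H))=-\log\int_X e^{-\phi_H}$, and I must show $t\mapsto\log\int_X e^{-\phi_{H_t}}$ is concave, strictly so when $A$ is not scalar.

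The key step is to realize $\int_X e^{-\phi_{H_t}}$ as the squared norm of a holomorphic section of a direct image bundle and invoke Berndtsson's positivity theorem \cite{Ber09a,Ber09b}. After a unitary change of basis (which preserves $\Psi$ along the geodesic), assume $A=\diag(a_1,\dots,a_{n_p})$ and $\s=\{s_j\}_{j=1}^{n_p}$; in a local holomorphic frame $e_L^{\otimes p}$ of $L^p$ write $s_j=\sigma_j\,e_L^{\otimes p}$. By the characterization of $\FS$ in \cref{sumprop}, the local weight of $\FS(H_{e^{tA}\s})$ on $L^p$ is $\log\sum_j e^{2ta_j}|\sigma_j|^2$, so $\phi_{H_t}=\tfrac1p\log\sum_j e^{2ta_j}|\sigma_j|^2$. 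Set $\mathcal{X}:=X\times\C$ with projection $\pi\colon\mathcal{X}\to\C$, coordinate $\tau$, and equip $\mathcal{L}:=\pi_X^*(K_X^*)=-K_{\mathcal{X}/\C}$ with the metric of local weight $\phi_\tau(x):=\tfrac1p\log\sum_j|e^{\tau a_j}\sigma_j(x)|^2$. I would check that this is a genuine smooth metric (the transition rule is the correct one for $\pi_X^*L$, and $\sum_j|e^{\tau a_j}\sigma_j|^2>0$ everywhere since the $s_j$ have no common zero), that $\sqrt{-1}\partial\dbar\phi\geq0$ — indeed $\phi=\tfrac1p F^*\log\|\cdot\|^2$ for the holomorphic map $F\colon(x,\tau)\mapsto[e^{\tau a_1}\sigma_1(x):\cdots:e^{\tau a_{n_p}}\sigma_{n_p}(x)]\in\mathbb{P}^{n_p-1}$, whose restriction to $\tau=0$ is the Kodaira embedding \cref{Kod} — and that $\phi_\tau$ depends on $\tau$ only through $t=\re\tau$, with $\phi_t=\phi_{H_t}$.

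Since $K_{\mathcal{X}/\C}\otimes\mathcal{L}=\mathcal{O}_{\mathcal{X}}$, the direct image $\mathcal{E}:=\pi_*(K_{\mathcal{X}/\C}\otimes\mathcal{L})$ is the trivial line bundle $\mathcal{O}_\C$, trivialized by the constant holomorphic section $u\equiv 1$, whose $L^2$‑norm is $\|u\|^2(\tau)=c\int_X e^{-\phi_\tau}=c\,\Vol(d\nu_{h_{H_t}})$ for a fixed $c>0$. Berndtsson's theorem \cite{Ber09a,Ber09b} then gives that the $L^2$‑metric on $\mathcal{E}$ is semipositively curved, i.e. $\sqrt{-1}\partial\dbar\log\|u\|^2\leq 0$; as this function depends only on $\re\tau$, the map $t\mapsto\log\Vol(d\nu_{h_{H_t}})$ is concave, so $E(\FS(H_t))$ and hence $\Psi(H_t)$ is convex. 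For strict convexity, suppose $\Psi(H_t)$ — equivalently $t\mapsto\log\Vol(d\nu_{h_{H_t}})$ — is affine on a subinterval; by real‑analyticity in $t$ it is then affine on $\R$, so $\log\|u\|^2$ is pluriharmonic, i.e. $\mathcal{E}$ is flat, and by the rigidity case of Berndtsson's theorem this forces $(\sqrt{-1}\partial\dbar\phi)^{n+1}=0$ on $\mathcal{X}$. Since $\sqrt{-1}\partial\dbar\phi$ is proportional to $F^*\om_{\mathrm{FS}}$ and $\dim\mathbb{P}^{n_p-1}\geq n+1$ for $p$ large, this means $F(\mathcal{X})=\bigcup_\tau g(\tau)\cdot\Kod_p(X)$ has complex dimension $\leq n$, where $g(\tau)=\diag(e^{\tau a_1},\dots,e^{\tau a_{n_p}})$; as $\Kod_p(X)$ is $n$‑dimensional this forces $\Kod_p(X)$ to be invariant under the one‑parameter group $g$, whose infinitesimal generator then restricts to a holomorphic vector field on $X\cong\Kod_p(X)$, necessarily zero since $\Aut(X)$ is discrete. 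Thus $g$ fixes $\Kod_p(X)$ pointwise; but the fixed locus of $g$ is a union of coordinate linear subspaces, and $\Kod_p(X)$ is irreducible and non‑degenerate (the $s_j$ are linearly independent), so this forces $g$ to be trivial, i.e. $A$ a multiple of the identity — a contradiction.

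I expect the main obstacle to be the last step: invoking the rigidity (equality) case of Berndtsson's positivity theorem in the precise form $(\sqrt{-1}\partial\dbar\phi)^{n+1}=0$ and converting flatness of the direct image into the geometric statement about the one‑parameter group, which is exactly where the hypothesis that $\Aut(X)$ be discrete is used (to kill the resulting holomorphic vector field) together with the non‑degeneracy of the Kodaira embedding. The remaining ingredients — the reduction via the affineness of $\log\det$, the direct‑image construction, the identity $K_{\mathcal{X}/\C}\otimes\mathcal{L}=\mathcal{O}_{\mathcal{X}}$, and the weight bookkeeping through \cref{sumprop} and \cref{dnucandef} — are routine.
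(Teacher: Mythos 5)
Your proof is correct. The paper does not prove this proposition itself but cites \cite[Lemma 7.2]{BBGZ13}, and your argument --- splitting off the affine $\log\det$ term, realizing $\Vol(d\nu_{h_{H_t}})$ as the $L^2$-norm of the constant section of $\pi_*\big(K_{\mathcal{X}/\C}\otimes(-K_{\mathcal{X}/\C})\big)$ for the Fubini--Study family over $\C$, applying Berndtsson's positivity of direct images, and then using its equality case $(\sqrt{-1}\partial\dbar\phi)^{n+1}=0$ together with the non-degeneracy of the Kodaira embedding and the absence of holomorphic vector fields when $\Aut(X)$ is discrete --- is essentially the proof given in that reference.
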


The fundamental role of the energy functional \cref{Psidef}
in finding anticanonically balanced products comes from the
following identity, which follows from \cref{FSvar} as in
the proof of \cref{dmupre} for all $\s\in\BB(H^0(X,L^p))$
and all $A\in\Herm(\C^{n_p})$,
\begin{equation}\label{dpsi=mu}
\frac{d}{dt}\Big|_{t=0}\Psi(H_{e^{tA}\s})=-\frac{2}{p\,\Vol(d\nu_\s)}\Tr[\mu(\s)A]\,.
\end{equation}
By \cref{momentbal}, this implies in particular that
critical points of $\Psi$ coincide with
anticanonically balanced products, and \cref{BBGZ} shows that
they are unique up to a multiplicative constant.
This also implies the following
result on the iterations of Donaldson's map,
due to Berman \cite[Th.\,4.14]{Ber13}. It 
essentially follows the proof of Donaldson
in \cite[Prop.\,4]{Don09}, and we give it here as it will be
used in the next Section.

\begin{prop}\label{TTpcvth}
Assume that $\Aut(X)$ is discrete, and let $p\in\N$
be such that an anticanonically balanced
product exists. Then for any
$H_0\in\Prod(H^0(X,L^p))$, there exists 
an anticanonically balanced product $H\in\Prod(H^0(X,L^p))$
such that
\begin{equation}\label{TTpcvfla}
\TT_\nu^k(H_0)\xrightarrow{k\fl+\infty}H\,.
\end{equation}
\end{prop}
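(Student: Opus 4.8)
The plan is to use the energy functional $\Psi$ of \cref{Psidef} as a Lyapunov functional for the iteration $\TT_\nu$, following Donaldson \cite{Don09} and Berman \cite{Ber13}.

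\textbf{Step 1: monotonicity of $\Psi$, the equality case, and scale-invariance.} First I would show that $\Psi(\TT_\nu(H))\le\Psi(H)$ for every $H\in\Prod(H^0(X,L^p))$, with equality only if $\FS(H)$ is a balanced metric. Write $\Psi=E\circ\FS+\tfrac{1}{pn_p}\log\det$. The bound $E(\FS(\TT_\nu(H)))\le E(\FS(H))$ is exactly \cref{Lfctlem} applied to $h^p:=\FS(H)$, since $\FS\circ\TT_\nu=(\FS\circ\Hilb_\nu)\circ\FS$. For the determinant term, pick a basis $\s=\{s_j\}_{j=1}^{n_p}$ orthonormal for $H$; then the Gram matrix of $\TT_\nu(H)=\Hilb_\nu(h^p)$ in $\s$ equals $\tfrac{n_p}{\Vol(d\nu_h)}M$ with $M=\big(\int_X\langle s_j,s_k\rangle_{h^p}\,d\nu_h\big)_{j,k}$, which is positive definite with $\Tr M=\int_X\sum_j|s_j|^2_{h^p}\,d\nu_h=\Vol(d\nu_h)$ by \cref{sumprop}, so the arithmetic--geometric mean inequality on its eigenvalues gives $\det\!\big(\tfrac{n_p}{\Vol(d\nu_h)}M\big)\le1$, i.e. $\log\det(\TT_\nu(H))\le\log\det(H)$. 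As both summands of $\Psi(\TT_\nu(H))-\Psi(H)$ are $\le0$, equality forces equality in the first; retracing the Jensen step in the proof of \cref{Lfctlem} (together with $\int_X\rho_{h^p}\,d\nu_h=n_p$) then forces $\rho_{h^p}$ constant, hence $\FS(H)$ balanced by \cref{balmet=balemb}, hence $\TT_\nu(H)=\Hilb_\nu(\FS(H))$ a balanced product. Finally, \cref{dnuef/dnu} gives $\FS(e^cH)=e^c\FS(H)$ and $\TT_\nu(e^cH)=e^c\TT_\nu(H)$, and one checks directly $\Psi(e^cH)=\Psi(H)$; hence $\Psi$ and $\TT_\nu$ descend to $\bar\Psi$ and $\bar{\TT}_\nu$ on the quotient $\bar P:=\Prod(H^0(X,L^p))/\R_{>0}$.

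\textbf{Step 2: properness of $\bar\Psi$.} By \cref{dpsi=mu} and \cref{momentbal} the critical points of $\Psi$ are exactly the balanced products; by the convexity in \cref{BBGZ} these are global minima, and since $\bar\Psi$ is strictly convex along every geodesic of $\bar P$, the minimizer $\bar H_\ast$ --- which exists by the hypothesis that a balanced product exists --- is unique. As $\bar P$ is a symmetric space of non-compact type (with geodesics as in \cref{geod}), hence a complete simply connected manifold of non-positive curvature, a function convex along geodesics and attaining a minimum is automatically proper: along a geodesic ray issuing from $\bar H_\ast$ it is non-decreasing, and were it bounded it would be constant, contradicting strict convexity. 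Hence every sublevel set of $\bar\Psi$ is compact.

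\textbf{Step 3: convergence.} Set $H_k:=\TT_\nu^k(H_0)$. By Step 1, $\Psi(H_k)$ is non-increasing and bounded below by $\min\bar\Psi$, so $\Psi(H_k)\to\ell$. The classes $\bar H_k$ lie in the compact set $\{\bar\Psi\le\Psi(H_0)\}$; if $\bar H_{k_j}\to\bar H_\infty$ along a subsequence, then by continuity $\bar\Psi(\bar{\TT}_\nu(\bar H_\infty))=\lim_j\Psi(H_{k_j+1})=\ell=\bar\Psi(\bar H_\infty)$, so the equality case of Step 1 makes $\FS(H_\infty)$ balanced, hence $\bar{\TT}_\nu(\bar H_\infty)$ the class of a balanced product, hence $\bar{\TT}_\nu(\bar H_\infty)=\bar H_\ast$ by uniqueness; then $\ell=\bar\Psi(\bar H_\ast)=\min\bar\Psi$, so $\bar H_\infty$ is a minimizer and $\bar H_\infty=\bar H_\ast$. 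Since every subsequence of $(\bar H_k)$ has a sub-subsequence converging to $\bar H_\ast$, we get $\bar H_k\to\bar H_\ast$. It remains to pin down the scale: writing $\Psi(H_{k+1})-\Psi(H_k)=\big[E(\FS(H_{k+1}))-E(\FS(H_k))\big]+\tfrac1p d_k$ with $d_k:=\tfrac1{n_p}\big[\log\det H_{k+1}-\log\det H_k\big]$, both bracketed terms are $\le0$ by \cref{Lfctlem} and the determinant estimate of Step 1, and their sum over $k$ telescopes to the finite value $\ell-\Psi(H_0)$; hence the series of each non-positive summand converges separately, in particular $\sum_k d_k$ converges, so $\log\det H_k=\log\det H_0+n_p\sum_{j<k}d_j$ converges. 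Together with $\bar H_k\to\bar H_\ast$ this yields $H_k\to H$ for some $H\in\Prod(H^0(X,L^p))$, and $\TT_\nu(H)=\lim_k\TT_\nu(H_k)=\lim_k H_{k+1}=H$ by continuity, so $H$ is an anticanonically balanced product.

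The main obstacle is Step 2: properness of $\bar\Psi$ is where the existence hypothesis genuinely enters, and it rests on the strict-convexity statement of \cref{BBGZ} together with the non-positive curvature of $\bar P$. Once $\Psi(H_k)\to\ell$ and the compactness of sublevel sets are in hand, the equality analysis of Step 1 and the scale bookkeeping of Step 3 are routine.
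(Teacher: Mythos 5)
Your proof is correct and follows essentially the same route as the paper's: monotonicity of the energy functional $\Psi$ along the iteration with both summands of \cref{Psidef} decreasing separately, convexity/properness from \cref{BBGZ} to extract an accumulation point, and an equality-case analysis to identify it as a balanced product. The only cosmetic differences are that you pass to the quotient by scaling and identify the limit via the equality case of the Jensen step in \cref{Lfctlem}, whereas the paper stays in $\Prod(H^0(X,L^p))$, invokes properness on bounded-determinant subsets, and identifies the limit via the equality case of the determinant inequality \cref{logconc}; both variants work, and your justification of properness is if anything more detailed than the paper's.
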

\begin{proof}
Let us first show that for any $H\in\Prod(H^0(X,L^p))$,
we have $\Psi\left(\TT_\nu(H)\right)\leq\Psi(H)$.
By formula \cref{Tfla} and via the identification
\cref{basept}, as
$\Pi_H$ is rank-$1$ we have
\begin{equation}\label{logdet}
\Tr\left[\frac{\TT_\nu(H)H^{-1}}{n_p}\right]
=\frac{1}{\Vol(d\nu_{h_H})}\,\int_X\,\Tr[\Pi_H]\,
d\nu_{h_H}=1\,,
\end{equation}
so that
by concavity of the logarithm,
\begin{equation}\label{logconc}
\begin{split}
\frac{\log\det \TT_\nu(H)}{n_p}
-\frac{\log\det H}{n_p}&=\frac{\log\det\left(\TT_\nu(H)H^{-1}
\right)}{n_p}\\
&\leq
\log\Tr\left[\frac{\TT_\nu(H)H^{-1}}{n_p}\right]=0\,,
\end{split}
\end{equation}
with equality if and only if $\TT_\nu(H)=H$.
On the other hand, using \cref{Lfctlem} we get
$E(\TT_\nu(\FS(H)))\leq E(\FS(H))$, so that
$\Psi\left(\TT_\nu(H)\right)\leq\Psi(H)$, for all
$H\in\Prod(H^0(X,L^p))$.

Now by \cref{BBGZ} and identity \cref{dpsi=mu}, the existence of a 
balanced product implies that the functional $\Psi$
is bounded from below, so that in particular,
the decreasing sequence
$\{\Psi(\TT_\nu^r(H_0))\}_{r\in\N}$ converges to its lower bound.
As the Donaldson map $\TT_\nu$
decreases both terms of \cref{Psidef} separately,
this implies that the decreasing sequence
$\{\log\det(\TT_\nu^r(H_0))\}_{r\in\N}$ is also bounded from
below, so that $\{\det(\TT_\nu^r(H_0))\}_{r\in\N}$ is bounded in 
$]0,+\infty[$ and
\begin{equation}\label{logdetto0}
\frac{1}{n}\log\det\left(
\TT_\nu^{r+1}(H_0)\TT_\nu^r(H_0)^{-1}\right)
\longrightarrow 0\quad\text{as}~~r\to+\infty\;.
\end{equation}
Again by \cref{BBGZ} and identity \cref{dpsi=mu}, the existence of a 
balanced product implies that the functional $\Psi$
is proper over any subset of $\Prod(H^0(X,L^p))$
with bounded determinant.
We thus get that the sequence $\{\TT_\nu^r(H_0)\}_{r\in\N}$
admits an accumulation point
$H_p\in\Prod(H^0(X,L^p))$.
On the other hand, the equality case in
formula \cref{logdet} and formula \cref{logdetto0}
implies
\begin{equation}
\TT_\nu^{r+1}(H_0)\TT_\nu^r(H_0)^{-1}
\longrightarrow\Id,\quad\text{as}~~r\to+\infty\;.
\end{equation}
We thus get that $H\in\Prod(H^0(X,L^p))$ is the unique
accumulation point, and satisfies
$\TT_\nu(H_p)=H$. This concludes the proof.
\end{proof}

\subsection{Exponential convergence of Donaldson's iterations}

This Section is dedicated to the proof of \cref{gapth}.
It follows the argument of the analogous result
in \cite[Th.\,4.4]{IKPS19}
for the constant
volume map of \cref{cstantex}.

Consider the setting of \cref{BTsec} for an anticanonically
balanced metric $h^p\in\Met^+(L^p)$, so that
$H:=L^2(h^p)\in\Prod(H^0(X,L^p))$
is an anticanonically balanced product.

Recall that if $H\in\Prod(H^0(X,L^p))$
is an anticanonically balanced product,
then we have $\cL(H^0(X,L^p),H)=\cL(\HH_p)$ as real Hilbert spaces
for the
trace norm. Write
$D_H\TT_\nu:\cL(\HH_p)\to\cL(\HH_p)$ for
the differential
of the Donaldson map at $H$ in the
identification \cref{Prodid}.
%

\begin{lem}\label{expcvlem}
Let $X$ be a Fano manifold with $\Aut(X)$ discrete 
admitting a polarized Kähler-Einstein metric, and
let $\{H_p\in\Prod(H^0(X,L^p))\}_{p\in\N}$ be a
sequence of anticanonically
balanced products for all $p\in\N$ big enough.

Then $D_{H_p}\TT_\nu$
is an injective self-adjoint operator acting on $\cL(\HH_p)$
satisfying $D_{H_p}\TT_\nu(\Id)=\Id$.
Furthermore, the highest eigenvalue
$\gamma_1(H_p)\in\R$ of its restriction to
the subspace of traceless
matrices satisfies the following estimate as $p\to+\infty$,
\begin{equation}\label{gamma1est}
\gamma_1(H_p)= 1-\frac{\lambda_1
-4\pi}{4\pi p}+O(p^{-2})\,,
\end{equation}
where $\lambda_1>0$ is the first positive
eigenvalue of the Riemannian Laplacian associated
with the polarized Kähler-Einstein metric acting on
$\cinf(X,\C)$.
\end{lem}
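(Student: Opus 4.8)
The plan is to read the differential $D_{H_p}\TT_\nu$ off \cref{dTT}, recognise its restriction to traceless matrices as a multiple of the Berezin--Toeplitz quantum channel, and then feed in the semiclassical asymptotics of \cref{Bpgap}; the smooth convergence provided by \cref{mainth} is what will make the remainder uniform.

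First I would fix notation. Since $\Aut(X)$ is discrete and $X$ carries a polarized Kähler--Einstein metric, \cref{mainth} produces an anticanonically balanced metric for every large $p$, so by the Berman--Boucksom--Guedj--Zeriahi uniqueness theorem the balanced metric is then unique up to a positive multiplicative constant, and hence $h^p_{H_p}:=\FS(H_p)$ agrees up to such a constant with the metric of \cref{mainth}. Because $H_p$ is balanced it coincides with $L^2(h^p_{H_p})$ up to a constant, so $\cL(H^0(X,L^p),H_p)$ and $\cL(\HH_p)$ are the same real Hilbert space for the trace norm, and \cref{dTT} reads
\[
D_{H_p}\TT_\nu(A)=\Big(1+\tfrac1p\Big)\,\EE_{h^p_{H_p}}(A)-\tfrac1p\,\tfrac{\Tr[A]}{n_p}\,\Id_{\HH_p},\qquad A\in\cL(\HH_p).
\]
The first summand is self-adjoint for the trace norm by \cref{quantchanprop}, and the second is $\tfrac1p$ times the orthogonal projection onto $\R\,\Id_{\HH_p}$ (as $\langle A,\Id_{\HH_p}\rangle_{tr}=\Tr[A]$), hence self-adjoint; so $D_{H_p}\TT_\nu$ is self-adjoint, and evaluating at $\Id_{\HH_p}$ together with $\EE_{h^p_{H_p}}(\Id_{\HH_p})=\Id_{\HH_p}$ gives $D_{H_p}\TT_\nu(\Id_{\HH_p})=\Id_{\HH_p}$. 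Next I would note that $\R\,\Id_{\HH_p}$ and its trace-norm orthocomplement, the traceless Hermitian endomorphisms, are invariant under $\EE_{h^p_{H_p}}$ by \cref{quantchanprop}, hence under $D_{H_p}\TT_\nu$; on the first it is the identity, on the second the trace term drops out and it equals $(1+\tfrac1p)\,\EE_{h^p_{H_p}}$, which is injective since $\EE_{h^p_{H_p}}$ is (\cref{quantchanprop}). Thus $D_{H_p}\TT_\nu$ is injective and its top eigenvalue on traceless matrices is $\gamma_1(H_p)=(1+\tfrac1p)\,\gamma_1(h^p_{H_p})$, where $\gamma_1(h^p_{H_p})<1$ is the top nontrivial eigenvalue of the quantum channel.

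It remains to estimate $\gamma_1(h^p_{H_p})$. I would first observe that $\EE_{h^p}$, hence all of its eigenvalues, is insensitive to rescaling $h^p$ by a positive constant: such a rescaling multiplies $L^2(h^p)$ by a constant, leaving the orthogonal projector $\Pi_p$ untouched, while \cref{Rawnprop} shows the measure $\rho_{h^p}\,d\nu_h$ — and thus $T_{h^p}$ and the Berezin symbol — is unchanged. Hence $\gamma_1(h^p_{H_p})$ equals the corresponding eigenvalue for the balanced metric of \cref{mainth}, whose induced metric on $L=K_X^*$ has Chern curvature converging smoothly to that of $h_\infty$; fixing the multiplicative constant, this metric then converges to $h_\infty$ in every $\CC^m$-norm, so the family stays bounded in $\CC^l$-norm and away from degeneracy, and the anticanonical volume map \cref{dnucandef} stays bounded below along it. The uniform-in-metric part of \cref{Bpgap} then yields $\gamma_1(h^p_{H_p})=1-\lambda_1(h_{H_p})/(4\pi p)+O(p^{-2})$, where $h_{H_p}\in\Met^+(L)$ is induced by $h^p_{H_p}$; and since $g^{TX}_{h_{H_p}}\to g^{TX}_{h_\infty}$ smoothly at rate $O(1/p)$, continuity of the discrete spectrum gives $\lambda_1(h_{H_p})=\lambda_1+O(1/p)$. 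Combining, $\gamma_1(H_p)=(1+\tfrac1p)(1-\tfrac{\lambda_1}{4\pi p}+O(p^{-2}))=1-\tfrac{\lambda_1-4\pi}{4\pi p}+O(p^{-2})$, which is \cref{gamma1est}.

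The main obstacle is precisely the uniformity of the $O(p^{-2})$ remainder in \cref{Bpgap}: this is where smooth convergence in \cref{mainth} (rather than mere convergence of currents) is indispensable, combined with the rescaling-invariance of the quantum channel so that the unknown balanced product $H_p$ can be traded for the explicitly convergent metric of \cref{mainth}. Everything else is bookkeeping around \cref{dTT} and the quantum channel properties of \cref{quantchanprop}.
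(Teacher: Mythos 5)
Your proposal is correct and follows essentially the same route as the paper: read $D_{H_p}\TT_\nu$ off \cref{dTT}, use \cref{quantchanprop} for self-adjointness, injectivity and the invariance of the traceless subspace, trade the unknown balanced product for the explicitly convergent metric of \cref{mainth} via uniqueness up to a multiplicative constant and the scaling invariance of the spectrum, and conclude with the uniform version of \cref{Bpgap} together with $\lambda_1(h_{H_p})=\lambda_1+O(1/p)$. The only cosmetic difference is that you justify the scale-invariance at the level of the quantum channel (the measure $\rho_{h^p}\,d\nu_h$ being unchanged), whereas the paper argues via $\TT_\nu(cH)=c\,\TT_\nu(H)$; these are equivalent observations.
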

\begin{proof}
Recall from \cref{quantchanprop} that the 
quantum channel of \cref{quantchandef}
is a self-adjoint operator acting on $\cL(\HH_p)$,
so that by \cref{dTT}, the differential
$D_{H_p}\TT_\nu$ is self-adjoint and satisfies
$D_{H_p}\TT_\nu(\Id)=\Id$.
In particular, it preserves the orthogonal of the identity, i.e.,
the space of traceless endomorphisms,
and \cref{dTT} implies that
for all $A\in\cL(\HH_p)$ with $\Tr[A]=0$,
we have
\begin{equation}
D_{H_p}\TT_\nu(A)=\left(1+\frac{1}{p}\right)\EE_{\FS(H_p)}(A)\,.
\end{equation}
Then \cref{quantchanprop} implies that $D_{H_p}\TT_\nu$
is injective and positive as an operator acting on $\cL(\HH_p)$.

To establish formula \cref{gamma1est}, recall from
\cref{BBGZ} and identity \cref{dpsi=mu} that if
$\Prod(H^0(X,L^p))$ contains an
anticanonically balanced product, then it
is unique up to a multiplicative constant. Furthermore, \cref{Tdef}
shows that $\TT_\nu(cH_p)=c\TT_\nu(H_p)$ for every $c>0$,
so that the spectrum of $D_{H_p}\TT_\nu$ does not depend
on the chosen anticanonically balanced product.
Using \cref{mainth},
to compute the estimate \cref{gamma1est}, we can then assume
that $H_p:=L^2(h(p))$ for each $p\in\N$,
where $\{h(p)\in\Met^+(L)\}_{p\in\N}$
is a sequence of positive Hermitian metrics
converging to the Kähler-Einstein metric
$h_\infty\in\Met^+(L)$.
The statement is then
an immediate
consequence of
the uniformity in \cref{Bpgap}, as in
the proof of \cref{dmu}.
\end{proof}

Recall now that $\Prod(H^0(X,L^p))$ admits
a natural structure of a symmetric space via the quotient
map \cref{symmap}, and write
$\textup{dist}(\cdot,\cdot)$ for the associated distance.
Using \cref{expcvlem} and the geometric input of the previous
Section, we can now give the proof of \cref{gapth}
following \cite[Th.\,4.4]{IKPS19}.
\newline

{\noindent
\textbf{Proof of \cref{gapth}.}}
Fix $p\in\N$ such that an anticanonically balanced
product exists by \cref{mainth},
and fix any $H_0\in\Prod(H^0(X,L^p))$.
By \cref{TTpcvth}, there exists an anticanonically
balanced product $H_p\in\Prod(H^0(X,L^p))$
such that
\begin{equation}
\TT_\nu^k(H_0)\longrightarrow H_p,\quad\text{as}\ k\to
+\infty\;.
\end{equation}
Then up to enlarging the constant
$C>0$ in \cref{expcvest}, we can assume that $H_0$
belongs to any fixed neighborhood $U\subset\Prod(H^0(X,L^p))$ of
$H_p$.
Consider $H_p$ as a base point metric as in \cref{basept},
so that any $H\in\Prod(H^0(X,L^p))$ is identified
with an Hermitian endomorphism $H\in\cL(\HH_p)$ via the
formula $H:=H_p(H\cdot,\cdot)$.
Take a neighborhood $U\subset\Prod(H^0(X,L^p))$ 
such that there is a diffeomorphism
\begin{equation}\label{diffU}
\begin{split}
U&\longrightarrow V\times I\\
H&\longmapsto\left(\frac{H}{\det(H)},\det(H)\right)\;,
\end{split}
\end{equation}
where $I\subset\R$ is a neighborhood of $1\in\R$ and
$V$ is a neighborhood of $H_p\simeq\Id_{\HH_p}$ in the space
of positive Hermitian endomorphisms of determinant $1$
acting on $\HH_p$.
In particular, the tangent space $T_{H_p}V$ is naturally
identified
with the space of
traceless endomorphisms in $\cL(\HH_p)$.
Then for any $H\in V$, the map
\begin{equation}\label{Tnu/det}
H\longmapsto\frac{\TT_\nu(H)}{\det(\TT_\nu(H))}
\end{equation}
fixes $H_p$, and its differential acts on traceless
Hermitian endomorphisms in $\cL(\HH_p)$ by
\begin{equation}\label{Tnulin}
D_{H_p}\TT_\nu-\Tr[D_{H_p}\TT_\nu]\,\Id_{\cL(\HH_p)}\;.
\end{equation}
By \cref{expcvlem}, it
is a self-adjoint operator with 
eigenvalues contained in $]0,1[\subset\R$, which
implies in particular
that the map \cref{Tnu/det} is a local diffeomorphism
around $H_p$ in $V$.
Furthermore, by the classical Hartman-Grobman theorem,
the map \cref{Tnu/det}
is conjugate by a local homeomorphism to its linearization at
$H_p$. In particular, taking $\beta_p\in\,]0,1[$ as the
largest eigenvalue of \cref{Tnu/det}, we get a constant $C>0$
such that for all $k\in\N$,
\begin{equation}\label{Tnudetexpcv}
\textup{dist}
\left(\frac{\TT_\nu^k(H_0)}{\det(\TT_\nu^k(H_0))},H_p
\right)\leq C\beta_p^k\;.
\end{equation}
In view of \cref{diffU}, we see that to get
the exponential convergence \cref{expcvest}
from \cref{Tnudetexpcv},
we need to show that there is a constant $C>0$ such that
for all $k\in\N$, we have
\begin{equation}\label{detcv}
\left|\det \TT^k_\nu(H_0)-1\right|<C\beta^k_p\;.
\end{equation}
To this end recall that the functional
$\Psi:\Prod(H^0(X,L^p))\fl\R$ of
\cref{BBGZ} is decreasing under iterations of $\TT_\nu$
and invariant with respect to the action
of $\R_+$ by multiplication. By \cref{Tnudetexpcv}
and the differentiability of $\Psi$, there exists a constant
$C>0$ such that for all $k\in\N$, we have
\begin{equation}
0\leq\Psi(\TT^k_\nu(H_0))-\Psi(H_p)\leq C\beta^r_p
\;.
\end{equation}
A both terms apearing in the definition \cref{Psidef} of $\Psi$
are decreasing by \cref{Lfctlem} and formula \cref{logconc}
respectively, we deduce in particular that for all $k\in\N$
big enough,
\begin{equation}
0\leq\log\det(\TT^k_\nu(H_0))\leq C\beta^k_p\;,
\end{equation}
from which \cref{detcv} follows. This completes the proof
of the exponential convergence \cref{expcvest}. The asymptotic
expansion \cref{betap} is then immediate consequence
of \cref{expcvlem}, and the fact that it is sharp
follows from the fact that \cref{Tnu/det}
is conjugate to its linearization
\cref{Tnulin} by a local homeomorphism.
\qed

\begin{rmk}\label{gaprmk}
Consider a general compact complex manifold $X$
equiped with an ample line bundle $L$, and
consider a volume map equal
to a constant value $d\nu\in\MM(X)$ as in \cref{cstantex}.
Then the asymptotics of the optimal rate of convergence
\cref{betap} for the associated Donaldson map
have been computed in \cite[Th.\,3.1, Rmk.\,4.12]{IKPS19},
and are given by the following estimate as $p\to+\infty$
\begin{equation}\label{betapcstant}
\beta_p=1-\frac{\lambda_1}
{4\pi p}+ O(p^{-2})\,,
\end{equation}
where $\lambda_1>0$ is the first eigenvalue of the
polarized Yau metric associated with $d\nu$.
Then if $X$ is a Fano manifold with $L:=K_X^*$
and if $d\nu\in\MM(X)$ is a Kähler-Einstein volume form
as in \cref{KEdef}, \cref{gapth} shows that the iterations
of the Donaldson map associated with
the constant volume map converge faster than
the iterations associated with
the anticanonical Donaldson map of \cref{Tdef}, as soon as
$p\in\N$ is
big enough. This behavior was predicted numerically
by Donaldson in
\cite[\S\,2.2.2]{Don09}.
Note that the iterations of the Donaldson map for the
constant volume map are of no practical
interest to approximate Kähler-Einstein metrics,
as one would need to know
the Kähler-Einstein volume form a priori.
By contrast, in case $X$ is a Calabi-Yau manifold,
the relevant
volume form $d\nu\in\MM(X)$
is purely determined by the complex geometry
of the manifold, and the iterations of the Donaldson map
in this case can be used to approximate numerically the
polarized Ricci-flat metric.

On the other hand, the methods of this paper also
apply to manifolds with $L:=K_X$ ample
and the canonical volume map of \cref{canex}, giving
the following estimate as $p\to+\infty$ for the
rate of convergence \cref{betap},
\begin{equation}\label{betapcstant2}
\beta_p=1-\frac{\lambda_1+4\pi}
{4\pi p}+ O(p^{-2})\,,
\end{equation}
where $\lambda_1>0$ is the first positive eigenvalue
of the Kähler-Einstein Laplacian acting on $\cinf(X,\C)$.
We then see that the iterations associated with
the canonical volume map
converge faster than both previous examples when $p\in\N$ is
large enough. Note that the existence of the Kähler-Einstein
metric in this case is the easiest case of the celebrated
theorem of Yau \cite{Yau78}, as shown by Aubin \cite{Aub78}.

Finally, using the methods of this paper and a refined
estimate on the spectral gap of the quantum channel,
one can show that
the rate of convergence of iterations for the Liouville
volume map of \cref{Liouvilleex} as $p\to+\infty$ satisfies
\begin{equation}\label{betapcstant3}
\beta_p=1+ O(p^{-2})\,,
\end{equation}
which also confirms a prediction of Donaldson
in \cite[\S\,2.2]{Don09}. \cref{gapth} thus shows that the convergence
of the iterations of Donaldson's map are much faster in the anticanonical
case than in the Liouville case, when $p\in\N$ is taken big enough.
\end{rmk}


\providecommand{\bysame}{\leavevmode\hbox to3em{\hrulefill}\thinspace}
\providecommand{\MR}{\relax\ifhmode\unskip\space\fi MR }
\providecommand{\MRhref}[2]{%
  \href{http://www.ams.org/mathscinet-getitem?mr=#1}{#2}
}
\providecommand{\href}[2]{#2}

\Addresses

\end{document}